\newtheorem{theorem}{Theorem}[section]
\newtheorem{proposition}[theorem]{Proposition}
\newtheorem{lemma}[theorem]{Lemma}
\newtheorem{corollary}[theorem]{Corollary}
\def\tab{\hspace{1.6em}}
\numberwithin{equation}{section}
\numberwithin{figure}{section}
\def\({\Big(}
\def\){\Big)}
\def\ep{\varepsilon}
\def\u{\overline u}
\def\G{\overline G}
\def\Lip{\text{Lip}}
\newcommand{\R}{\mathbb{R}}
\def\a{\textbf a}
\def\A{\textbf A}
\def\B{\textbf B}
\def\I{\textbf I}
\def\K{\textbf K}
\def\L{\textbf L}
\def\DT{\dom_{L^2}\triangle(\SG)}
\def\DI{\dom_{L^\infty}\triangle(\SG)}
\def\E{\mathcal E}
\def\T{\mathcal T}
\def\dom{\text{dom}}
\def\SG{\mathcal{SG}}
\begin{document}

\begin{abstract}

We study boundary value problems for the Laplacian on a domain $\Omega$ consisting of the left half of the Sierpinski Gasket ($\SG$), whose boundary is essentially a countable set of points $X$. For harmonic functions we give an explicit Poisson integral formula to recover the function from its boundary values, and characterize those that correspond to functions of finite energy. We give an explicit Dirichlet to Neumann map and show that it is invertible. We give an explicit description of the Dirichlet to Neumann spectra of the Laplacian with an exact count of the dimensions of eigenspaces. We compute the exact trace spaces on $X$ of the $L^2$ and $L^\infty$ domains of the Laplacian on $\SG$. In terms of the these trace spaces, we characterize the functions in the $L^2$ and $L^\infty$ domains of the Laplacian on $\Omega$ that extend to the corresponding domains on $\SG$, and give an explicit linear extension operator in terms of piecewise biharmonic functions. 
\end{abstract}

\title{Boundary Value Problems On A Half Sierpinski Gasket}
\author{Weilin Li}
\address{Malott Hall, Department of Mathematics, Cornell University, Ithaca, NY 14853}
\email{wl298@cornell.edu}
\address{4423 Mathematics Building, University of Maryland, College Park, MD 20742}
\email{wl298@math.umd.edu}

\author{Robert S. Strichartz}
\address{563 Malott Hall, Department of Mathematics, Cornell University, Ithaca, NY 14853}
\email{str@math.cornell.edu}
\thanks{2010 \emph{Mathematics Subject Classification}. 28A80, 35J25, 46E35.}
\thanks{\emph{Key words and phrases}. Sierpinski gasket, boundary value problems, fractal Laplacian, Dirichlet to Neumann map, Sobolev spaces, trace theorem, extension theorem, finite energy.}
\thanks{The first author was supported in part by the National Science Foundation grant DMS-0739164.}
\thanks{The second author was supported in part by the National Science Foundation grant DMS-1162045.}
\date{}
\maketitle

\section{Introduction}

The Laplacian on the Sierpinski Gasket was first constructed as a generator of a stochastic process, analogous to Brownian motion, by Kusuoka \cite{kusuoka} and Goldstein \cite{goldstein}. An analytic method of constructing the Laplacian on the Sierpinski Gasket as a renormalized limit of graph Laplacians was later developed by Kigami \cite{kigami1}. With a well defined Laplacian, it is possible to study differential equations on the Sierpinski Gasket, although strictly speaking, these are not differential equations. 

Harmonic functions on the Sierpinski Gasket have been studied in detail and the Dirichlet problem on the entire gasket reduces to solving systems of linear equations and multiplying matrices. However, there has been little research into boundary value problems on bounded subsets of fractals, except for \cite{owen}, \cite{qiu} and \cite{strichartz2} that consider domains generated by horizontal cuts of the gasket. Hence we believe it is appropriate to begin our exploration by studying the Dirichlet problem on a boundary generated by a vertical cut along one of the symmetry lines of the gasket. This is the simplest example of a boundary given as a level set of a harmonic function. We hope our results give insight into more general techniques for solving the Dirichlet problem and other boundary value problems on more general domains.  

Most of our results are applications of Kigami's harmonic calculus on fractals to our half gasket. His theory includes many mathematical objects specific to the world of fractal analysis, such as renormalized graph energies, normal derivatives and renormalized graph Laplacians. We will present some notation as we proceed, but for precise definitions and known facts (in particular the results that we call \textbf{Proposition}), see textbooks \cite{kigami2} and \cite{strichartz}.

The Sierpinski Gasket, denoted $\SG$, is the unique nonempty compact set satisfying $\SG = \bigcup_{j=0}^2 F_j \SG$ where $F_j$ are contractive mappings given by $F_jx = (x + q_j)/2$ and $q_j$ are the vertices of an equilateral triangle. Following convention, the boundary of $\SG$ is defined to be $V_0 = \{q_0, q_1, q_2\}$. Hence boundary in our language differs from the standard topological definition of boundary. Using the mappings $F_j$, we can iteratively generate a set of vertices $V_m$ where $m$ depends on the number of times we apply $F_j$. From $V_m$, we can find a graph approximation $\Gamma_m$. See Figure \ref{Vm} for an illustration. Notice how the boundary points $\{q_j\}$ are oriented and we keep this orientation for the entire paper. 
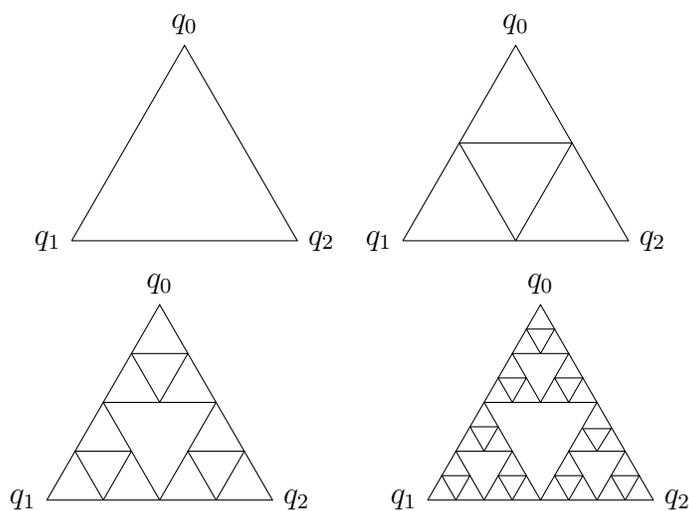
\begin{figure} [h]
\begin{center}
\begin{tikzpicture} [scale=0.5]
\draw 
(0, 0) node[anchor = east]{$q_1$} -- 
(6, 0) node[anchor = west]{$q_2$} -- 
(3, 5.2) node[anchor = south]{$q_0$} -- 
(0, 0);
\end{tikzpicture}
\indent
\begin{tikzpicture} [scale=0.5]
\draw 
(0, 0) node[anchor = east]{$q_1$} -- 
(6, 0) node[anchor = west]{$q_2$} -- 
(3, 5.2) node[anchor = south]{$q_0$} -- 
(0, 0);
\draw 
(3, 0) -- (1.5, 2.6) -- (4.5, 2.6) -- (3, 0);
\end{tikzpicture}
\tab
\begin{tikzpicture} [scale=0.5]
\draw 
(0, 0) node[anchor = east]{$q_1$} -- 
(6, 0) node[anchor = west]{$q_2$} -- 
(3, 5.2) node[anchor = south]{$q_0$} -- 
(0, 0);
\draw 
(3, 0) -- (1.5, 2.6) -- (4.5, 2.6) -- (3, 0);
\draw
(1.5, 0) -- (0.75, 1.3) -- (2.25, 1.3) -- (1.5, 0)
(4.5, 0) -- (5.25, 1.3) -- (3.75, 1.3) -- (4.5, 0)
(3, 2.6) -- (2.25, 3.9) -- (3.75, 3.9) -- (3, 2.6);
\end{tikzpicture}
\tab
\begin{tikzpicture} [scale=0.5]
\draw 
(0, 0) node[anchor = east]{$q_1$} -- 
(6, 0) node[anchor = west]{$q_2$} -- 
(3, 5.2) node[anchor = south]{$q_0$} -- 
(0, 0);
\draw 
(3, 0) -- (1.5, 2.6) -- (4.5, 2.6) -- (3, 0);
\draw
(1.5, 0) -- (0.75, 1.3) -- (2.25, 1.3) -- (1.5, 0)
(4.5, 0) -- (5.25, 1.3) -- (3.75, 1.3) -- (4.5, 0)
(3, 2.6) -- (2.25, 3.9) -- (3.75, 3.9) -- (3, 2.6);
\draw
(0.75, 0) -- (1.125, 0.65) -- (0.375, 0.65) -- (0.75, 0)
(2.25, 0) -- (1.875, 0.65) -- (2.625, 0.65) -- (2.25, 0)
(1.5, 1.3) -- (1.125, 1.95) -- (1.875, 1.95) -- (1.5, 1.3)
(3.75, 0) -- (4.125, 0.65) -- (3.375, 0.65) -- (3.75, 0)
(5.25, 0) -- (5.625, 0.65) -- (4.875, 0.65) -- (5.25, 0)
(4.5, 1.3) -- (4.125, 1.9) -- (4.875, 1.9) -- (4.5, 1.3)
(3.75, 2.6) -- (4.125, 3.25) -- (3.375, 3.25) -- (3.75, 2.6)
(2.25, 2.6) -- (2.625, 3.25) -- (1.875, 3.25) -- (2.25, 2.6)
(3, 3.9) -- (3.375, 4.55) -- (2.625, 4.55) -- (3, 3.9); 
\end{tikzpicture}
\end{center}
\caption{Left to right: $\Gamma_0, \Gamma_1, \Gamma_2, \Gamma_3$ of $\SG$}
\label{Vm}
\end{figure}

We work on the domain $\Omega$, which can be defined in terms of the level sets of a harmonic function. Let $h_s$ be the skew symmetric harmonic function with boundary values $(h_s(q_0), h_s(q_1), h_s(q_2)) = (0,1,-1)$. Then $\Omega = \{x \in \SG \setminus V_0 \colon h_s(x) > 0\}$ and $\partial \Omega = q_0 \cup q_1 \cup X$ where $X = \{ x \in \SG \setminus V_0 \colon h_s(x) = 0\}$. We write $\overline \Omega = \Omega \cup \partial\Omega$.

Figure \ref{omega} provides an illustration of $\overline \Omega$, which is precisely the left half of $\SG$ including the points on the symmetry line. In the figure, we labeled the points $x_m = F_0^{m-1} F_2 q_1$ and $y_m = F_0^m q_1$. Note that $X = \{x_m\}_{m=1}^\infty$, so each $x_m$ is important for obvious reasons. Each $y_m$ is important topologically because the removal of any $y_m$ turns $\Omega$ into a disconnected set. 

We also labeled the open sets $Y_m = F_0^{m-1} F_1(\SG \setminus V_0)$. Note that $\partial Y_m = \{x_m, y_{m-1}, y_m\}$ and we write $\overline Y_m = Y_m \cup \partial Y_m$. $\overline Y_m$ is classified as a cell because a cell is defined to be the image of $\SG$ under any compositions of contractive mappings $F_j$. Thus $\overline \Omega = \bigcup_{m=1}^\infty \overline Y_m$, which is an almost disjoint union. 

Although $\Omega$ is not globally self-similar because $\Omega$ cannot be written as a union of smaller copies of itself, it is locally self-similar because each $\overline Y_m$ is a fractal. The retention of this local property is extremely important for our analysis because any result regarding $\SG$ also holds for $\overline Y_m$ with a proper normalization factor. 
\begin{figure} [h]
\begin{center}
\begin{tikzpicture}
\draw [fill = lightgray]
(0, 0) node[anchor=east]{$q_1$} --
(3, 0) node[anchor=west]{$x_1$} --
(1.5, 2.6) -- 
(0, 0);
\draw [fill = lightgray]
(1.5, 2.6) node[anchor=east]{$y_1$} --
(3, 2.6) node[anchor=west]{$x_2$} --
(2.25, 3.9) --
(1.5, 2.6);
\draw [fill = lightgray]
(2.25, 3.9) node[anchor=east]{$y_2$} --
(3, 3.9) node[anchor=west]{$x_3$} --
(2.625, 4.55) --
(2.25, 3.9);
\draw [fill = lightgray]
(2.625, 4.55) node[anchor=east]{$y_3$} --
(3, 4.55) node[anchor=west]{$x_4$} --
(2.8125, 4.85) node[anchor=east]{$y_4$}--
(2.625, 4.55);
\fill[black] 
(3, 5.2) circle (1pt) node[anchor = south]{$q_0$};
\draw
(1.5, 1) node{\Huge $Y_1$}
(2.25, 3) node{\Large $Y_2$}
(2.625, 4.1) node{\tiny $Y_3$};
\end{tikzpicture}
\end{center}
\caption{A decomposition of $\overline \Omega$}
\label{omega}
\end{figure}
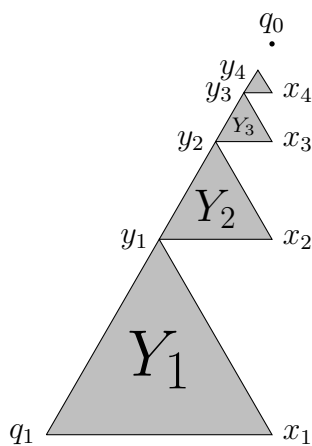

In the later sections, we will be interested in restriction and extension operators. Hence, we need to label points on the other half of the gasket. Let $z_m$ and $Z_m$ the reflections of $y_m$ and $Y_m$ respectively across the symmetry line containing $X$. Then $\SG = \bigcup_m (Y_m \cup Z_m)$ is an almost disjoint union and this decomposition will be useful in the later sections. 

We begin by studying the Dirichlet problem on $\Omega$:
\begin{equation} \label{bvp}
\begin{cases}
\ \triangle u = 0  		&\text{on } \Omega, \\
\ u(q_1) = a_0 			&\text{on } \partial \Omega, \\
\ u(x_m) = a_m 			&\text{on } \partial \Omega,
\end{cases}
\end{equation}
where $\triangle$ denotes the (Kigami) Laplacian with respect to the standard measure, $u: \overline \Omega \to \R$ is the unknown, and $\{a_m\}_{m=0}^\infty$ is the boundary data. Notice that we do not prescribe boundary data at $q_0$ even though $q_0 \in \partial \Omega$. This is by preference and is inconsequential because for almost the entire paper, we will assume $\{a_m\}$ converges. We will refer to (\ref{bvp}) as the BVP. 

In Section \ref{solution}, we construct a solution to the BVP using the harmonic extension algorithm, which we explain in that section. The space of $C(\Omega)$ solutions to the BVP is one-dimensional, but in general, the solutions blow up at $q_0$. We show that if the boundary data converges, then we can find a $C(\overline \Omega)$ solution that is unique in this function space.   

In Section \ref{energy}, we study the graph energy of the $C(\overline \Omega)$ solution to the BVP. Although its energy is complicated, the culminating theorem presents an equivalence between finite energy and the normalized summability of the the boundary data. In fact, finiteness depends only on how quickly the data converges and not on the limiting value. 

In Section \ref{normal}, we show that given stronger assumptions on the boundary data, we can obtain the existence of normal derivatives on $\partial \Omega$. In particular, we are interested in the behavior of the normal derivatives on $X$. The normal derivatives of the $C(\overline \Omega)$ solution on $X$ can be found in terms of the boundary data. This relationship allows us to define a Dirichlet to Neumann map and we show that this map is invertible.

In Section \ref{eigenfunctions}, we discuss both Dirichlet and Neumann eigenfunctions on $\Omega$. For more information on eigenvalues and eigenfunctions on fractals, see \cite{fukushima} and \cite{shima}. There are no new eigenfunctions on $\Omega$, but for a fixed eigenvalue, its multiplicity on $\Omega$ is different from its multiplicity on $\SG$. For each eigenfunction, we count the dimension of its eigenspace.
 
Section \ref{trace} and Section \ref{extension} are closely related to each other. We define a restriction operator that maps a function to its restriction to and normal derivatives on $X$. We characterize the function spaces $\DT$ and $\DI$ in terms of the restriction operator. Using this result, we provide necessary and sufficient conditions for extending functions in $\dom_{L^2}\triangle(\Omega)$ and $\dom_{L^\infty}\triangle(\Omega)$ to biharmonic functions in $\DT$ and $\DI$ respectively. 

Section \ref{appendix} acts as an appendix and in this section, we prove numerous lemmas about Green's functions and special types of sequences and series. Since these results are used in multiple sections and are purely technical lemmas, we have decided to place them in its own section. While the sequence and series lemmas may not be new, we have not found them in previously published work.  

It is important to mention that the results presented in this paper hold for any smaller copy of $\Omega$, $F_w(\Omega)$ for any word $w$, with different normalization constants.

\section{Solution to the Boundary Value Problem} \label{solution}

We begin this section by discussing the graph energy. The energy plays a central role in fractal analysis on $\SG$ because other objects such as harmonic functions, normal derivatives and the Laplacian, are defined in terms of the energy. Given a fixed value of $m$ and a real valued function $u$ on $\SG$, the (renormalized) graph energy of level $m$ is
\[
\E_m(u) = \sum_{x \sim_m y} \(\frac{5}{3}\)^m [u(x) - u(y)]^2,
\]
where $x \sim_m y$ means $x$ and $y$ are in the same cell of level $m$. The graph energy of $u$ is $\E(u) = \lim_{m \to \infty} \E_m (u)$, allowing the value $+\infty$. 

Given boundary conditions, we define a harmonic function to be the unique function that minimizes the graph energy subject these constraints. Additionally, our suggestive use of the word ``harmonic" is justified: harmonic functions as minimizers of energy are equivalent to functions that satisfy the differential equation $\triangle u = 0$. The Laplacian $\triangle$ is defined in Section \ref{normal}. 

The simplest tool for constructing harmonic functions subject to boundary conditions is the harmonic extension algorithm. For a function $u$ defined on $V_m$, we can define its harmonic extension to $V_{m+1}$ as follows. Let $\{v_j\}$ be the three boundary points of a cell with $\{u(v_j)\}$ given. Then the harmonic extension of $u$ to the three new points is shown in Figure \ref{harmonic extension}. It is not difficult to see that given $u$ on $V_m$, this is the unique extension that minimizes the graph energy at level $m+1$. 

We can apply the harmonic extension algorithm infinitely many times and the resulting function on $V_* = \bigcup_m V_m$ will be harmonic. It is not difficult to see that functions generated by the harmonic extension algorithm must be continuous. Furthermore, $V_*$ is dense in $\SG$ and so for continuous functions, it suffices to define them on a dense subset. Thus, we say a harmonic function is determined by its boundary values. 
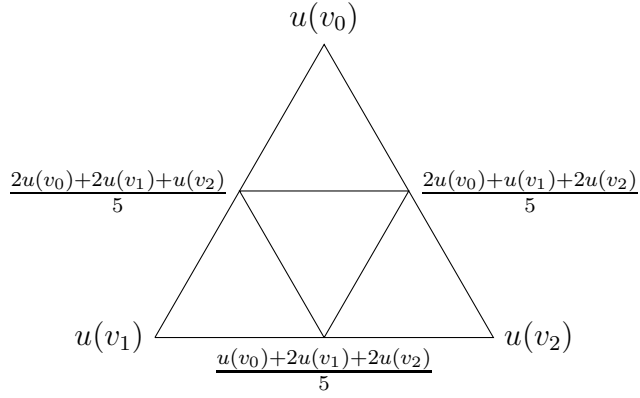
\begin{figure} [h]
\begin{center}
\begin{tikzpicture} [scale=0.75]
\draw 
(0, 0) node[anchor=east] {$u(v_1)$} -- 
(6, 0) node[anchor=west] {$u(v_2)$} -- 
(3, 5.2) node[anchor=south] {$u(v_0)$} -- 
(0, 0);
\draw 
(3, 0) node[anchor=north] {$\frac{u(v_0) + 2u(v_1) + 2u(v_2)}{5}$} -- 
(1.5, 2.6) node[anchor=east] {$\frac{2u(v_0) + 2u(v_1) + u(v_2)}{5}$} -- 
(4.5, 2.6) node[anchor=west] {$\frac{2u(v_0) + u(v_1) + 2u(v_2)}{5}$} -- 
(3, 0);
\end{tikzpicture}
\end{center}
\caption{Harmonic Extension Algorithm}
\label{harmonic extension}
\end{figure}

We can use the harmonic extension algorithm to construct a solution to the BVP. Any harmonic function on $\overline Y_m$ is determined by its values on $\partial Y_m$. Since $\overline \Omega = \bigcup_m \overline Y_m$, any harmonic function on $\overline \Omega$ is determined by its value at the points $\{x_m\}$ and $\{y_m\}$. In the following lemma, we see that there are additional constraints we must take into account.

\begin{lemma} \label{bvp L1}
Fix $m \geq 2$. Let $u$ be a continuous piecewise harmonic function with boundary data given by (\ref{bvp}). Then $\triangle u(y_m) = 0$ if and only if  
\begin{equation} \label{bvp recurrence}
u(y_m) = \frac{16}{5} u(y_{m-1}) - \frac{3}{5} u(y_{m-2}) - a_m - \frac{3}{5} a_{m-1}.
\end{equation} 
\end{lemma}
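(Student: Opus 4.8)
The plan is to read off $\triangle u(y_m)=0$ directly from the definition of the Laplacian as a renormalized limit of graph Laplacians, evaluating $u$ near $y_m$ by the harmonic extension algorithm. First I would fix the local picture: $y_m$ lies in exactly two cells of $\overline\Omega$, being the apex (the image of $q_0$) of $\overline Y_m=F_0^{m-1}F_1\SG$ and the image of $q_1$ in $\overline Y_{m+1}=F_0^mF_1\SG$. Hence at level $m+1$ it is an interior vertex with four neighbours, all lying in $\overline\Omega$: the two midpoints of the edges of $\overline Y_m$ meeting at the apex, together with the remaining vertices $y_{m+1}$ and $x_{m+1}$ of $\overline Y_{m+1}$. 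Using Figure \ref{harmonic extension} inside each cell I would write $u$ at the two midpoints as the usual weighted averages of $u(y_m),u(y_{m-1}),a_m$, while $u(y_{m+1})$ and $u(x_{m+1})=a_{m+1}$ are already vertex values.

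The engine of the proof is the decay in $j$ of the graph Laplacian $\triangle_{m+1+j}u(y_m)=\sum_{q\sim_{m+1+j}y_m}\big(u(q)-u(y_m)\big)$. Since $u$ is harmonic on each cell, I use the standard consequence of the harmonic extension algorithm that, at a corner of a copy of $\SG$ carrying corner values $c,c',c''$, the level-$\ell$ sum $\sum\big(c-u(\text{neighbour})\big)$ equals $(3/5)^{\ell}\big(2c-c'-c''\big)$. Splitting the four neighbours of $y_m$ by cell, the $\overline Y_m$-part is $(3/5)^{1+j}\big(2u(y_m)-u(y_{m-1})-a_m\big)$ and the $\overline Y_{m+1}$-part is $(3/5)^{j}\big(2u(y_m)-u(y_{m+1})-a_{m+1}\big)$. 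Factoring out $(3/5)^j$ gives $\triangle_{m+1+j}u(y_m)=-(3/5)^jD$ with $D$ independent of $j$, and since $\triangle u(y_m)$ is a fixed multiple of $\lim_j 5^{\,m+1+j}\triangle_{m+1+j}u(y_m)$ while $5^j(3/5)^j=3^j\to\infty$, this limit exists (and is then $0$) if and only if $D=0$.

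It remains to solve $D=0$. With $D=\tfrac35\big(2u(y_m)-u(y_{m-1})-a_m\big)+\big(2u(y_m)-u(y_{m+1})-a_{m+1}\big)$, setting $D=0$ and solving for the top value gives $u(y_{m+1})=\tfrac{16}{5}u(y_m)-\tfrac35u(y_{m-1})-a_{m+1}-\tfrac35a_m$, which is exactly (\ref{bvp recurrence}) after relabelling $m\mapsto m-1$; both directions of the lemma then follow at once from this chain of equivalences. The only genuinely delicate step is the one that produces the asymmetric coefficients: the two cells sharing $y_m$ have different sizes (levels $m$ and $m+1$), so their contributions enter with the extra relative weight $5/3$, which is precisely what turns the naive balance into the coefficient $\tfrac{16}{5}=\tfrac65+2$. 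Conceptually this is the matching condition that the scaled normal derivatives of $u$ from the two sides must cancel, and keeping those scale factors straight is where the care is needed; the rest is the harmonic extension algorithm and a geometric-series limit.
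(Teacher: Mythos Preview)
Your argument is correct and follows essentially the same route as the paper: both compute the graph Laplacian at the junction point shared by two consecutive cells $Y_{m-1}\cup Y_m$ (equivalently, your $Y_m\cup Y_{m+1}$ before relabelling), use the harmonic extension algorithm to evaluate $u$ at the needed midpoints, and read off the recurrence from the resulting linear relation. The paper phrases this as ``the mean value property at $y_{m-1}$'' at a single fixed level, whereas you make the limiting definition of $\triangle u$ explicit by tracking the $(3/5)^j$ decay of $\triangle_{m+1+j}u(y_m)$; this is the normal-derivative matching condition in disguise, and your observation that the limit of $5^{m+1+j}\cdot(3/5)^jD$ exists iff $D=0$ is exactly what justifies the paper's one-line appeal to the mean value property for a piecewise harmonic function.

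One small point worth flagging: your final relabelling $m\mapsto m-1$ shifts not only the recurrence but also the Laplacian condition, so what you have literally proved is $\triangle u(y_{m-1})=0\iff$ (\ref{bvp recurrence}). The paper's own proof does the same thing (it works at $y_{m-1}$ throughout), so the statement as written carries a harmless index offset; since the lemma is only ever applied for all $m$ simultaneously, this causes no trouble downstream.
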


\begin{proof}
Consider the level $m$ approximation of $Y_{m-1} \cup Y_m$. The value of $u$ at the midpoint of $y_{m-1}$ and $y_{m-2}$ and the midpoint of $y_{m-1}$ and $x_{m-1}$ are determined by the harmonic extension algorithm, shown in Figure \ref{bvp F1}. 
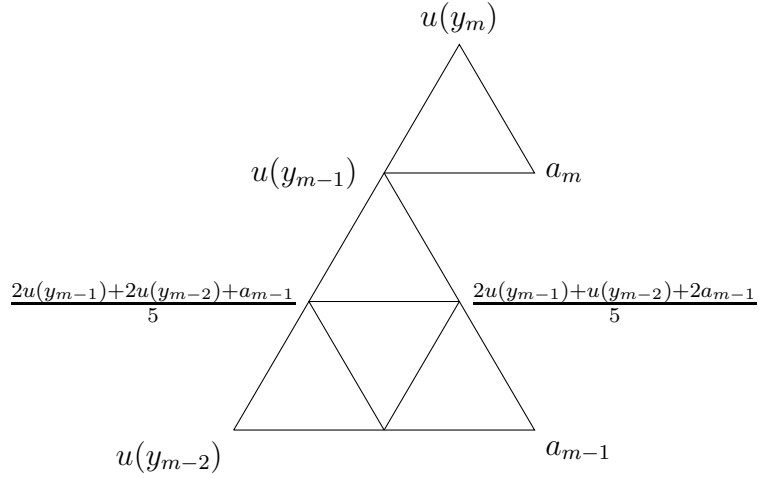
\begin{figure} [h]
\begin{center}
\begin{tikzpicture}
\draw 
(0,0) node[anchor = north east]{$u(y_{m-2})$} -- 
(4,0) node[anchor = north west]{$a_{m-1}$} -- 
(2,3.42) node[anchor = east]{$u(y_{m-1}) \hspace{0.5em}$} -- 
(0,0);
\draw 
(2,0) -- 
(3,1.71) node[anchor = east]{$\frac{2u(y_{m-1}) + 2u(y_{m-2}) + a_{m-1}}{5} \hspace{2cm}$} --
(1,1.71) node[anchor = west]{$\hspace{2cm} \frac{2u(y_{m-1}) + u(y_{m-2}) + 2a_{m-1}}{5}$} -- 
(2,0);
\draw (2,3.42) -- 
(4,3.42) node[anchor = west]{$a_m$} -- 
(3,5.13) node[anchor = south]{$u(y_m)$} -- 
(2,3.42);
\end{tikzpicture}
\end{center}
\caption{Harmonic extension}
\label{bvp F1}
\end{figure}
If $\triangle u(y_{m-1}) = 0$, then $u$ satisfies the mean value property at $y_{m-1}$. Thus, $u(y_{m-1})$ is the average of its four neighboring points in $V_m$ and simplifying that equation yields (\ref{bvp recurrence}). Conversely, if (\ref{bvp recurrence}) holds, then it is straightforward to check that $\triangle u(y_{m-1}) = 0$. 
\end{proof}

\begin{theorem} \label{bvp T1}
For every choice of convergent boundary data $\{a_m\}$, there is a one dimensional space of $C(\Omega)$ solutions to the BVP. Given a parameter $\lambda$, the solution to the BVP $u_\lambda$ is the harmonic extension of $u_\lambda(x_m) = a_m$, $u_\lambda(y_1) = \lambda$ and
\begin{align} \label{bvp b0}
u_\lambda(y_m) = 3^m F_m(\lambda) + \frac{1}{5^m} G_m(\lambda),
\end{align}
where
\[
F_m(\lambda)
= \frac{1}{14} \( 5\lambda - a_0 - a_1 - 18 \sum_{k=2}^m \frac{1}{3^k} a_k \)
\]
and 
\[
G_m(\lambda) 
= \frac{1}{14} \( - 5\lambda + 15 a_0 + 15a_1 + 4\sum_{k=2}^m 5^{k} a_k\).
\]
\end{theorem}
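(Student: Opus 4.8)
The plan is to reduce the BVP to a scalar second-order linear recurrence for the sequence $\{u(y_m)\}$ and to solve it in closed form. By Lemma \ref{bvp L1}, a continuous piecewise harmonic function carrying the data (\ref{bvp}) is harmonic at every interior vertex exactly when the recurrence (\ref{bvp recurrence}) holds for all $m \geq 2$. Since harmonic extension on each cell $\overline Y_m$ rebuilds $u$ from its values on $\partial Y_m = \{x_m, y_{m-1}, y_m\}$ and the values $u(x_m) = a_m$ are prescribed, the entire solution is encoded by $\{u(y_m)\}$. The boundary condition at $q_1 = y_0$ fixes $u(y_0) = a_0$, while $y_1$ is an interior vertex carrying no data, so I set $u(y_1) = \lambda$ as a free parameter; these are exactly the two initial values needed to launch an order-two recurrence.

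Next I would solve the recurrence $u(y_m) - \frac{16}{5}u(y_{m-1}) + \frac{3}{5}u(y_{m-2}) = -a_m - \frac{3}{5}a_{m-1}$. Its characteristic polynomial $5r^2 - 16r + 3$ has roots $r = 3$ and $r = 1/5$, which furnishes the two homogeneous modes $3^m$ and $5^{-m}$ and motivates the ansatz $u_\lambda(y_m) = 3^m F_m(\lambda) + 5^{-m} G_m(\lambda)$. To absorb the forcing $-a_m - \frac{3}{5}a_{m-1}$ I would apply discrete variation of parameters: letting the coefficients $F_m, G_m$ vary with $m$ and imposing the usual compatibility constraint turns the second-order recurrence into a pair of first-order telescoping recurrences. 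Summing these telescopes yields precisely the partial sums $\sum_{k=2}^m 3^{-k}a_k$ and $\sum_{k=2}^m 5^{k}a_k$ in the statement, and the two integration constants are pinned down by the initial data $u(y_0) = a_0$ and $u(y_1) = \lambda$, producing $F_1(\lambda)$ and $G_1(\lambda)$.

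The overall normalization $1/14$, together with the integer weights $18$, $15$, and $4$, traces back to the Casoratian of the two modes, $3^m 5^{-(m+1)} - 3^{m+1}5^{-m} = -\frac{14}{5}\,3^m 5^{-m}$, so the factor $14 = 5(3 - 1/5)$ is forced. I expect the bookkeeping in this step to be the main obstacle: the forcing splits into two pieces ($a_m$ and $a_{m-1}$) whose contributions must be combined correctly, and the constants $F_1, G_1$ must be chosen so that both initial conditions hold at once. A cleaner cross-check, which I would run in parallel, is to take the displayed $F_m, G_m$ as given and verify directly by induction that $3^m F_m(\lambda) + 5^{-m} G_m(\lambda)$ satisfies (\ref{bvp recurrence}) for all $m \geq 2$ and returns $\lambda$ at $m = 1$.

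Finally, for the dimension count I would note that the homogeneous problem (all data zero) reduces to the recurrence with $u(y_0) = 0$, whose solutions form the one-parameter family $\frac{5\lambda}{14}(3^m - 5^{-m})$. Hence the set of $C(\Omega)$ solutions to the BVP is a one-dimensional affine space parameterized by $\lambda$, and every $C(\Omega)$ solution arises in this way, since any such solution must obey the recurrence and the prescribed value at $q_1$.
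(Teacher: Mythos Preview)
Your proposal is correct and follows essentially the same route as the paper: reduce the BVP via Lemma \ref{bvp L1} to the second-order linear recurrence (\ref{bvp recurrence}) with initial data $u(y_0)=a_0$, $u(y_1)=\lambda$, and then solve it in closed form. The only cosmetic difference is in how the recurrence is solved: the paper rewrites it as a first-order matrix iteration $\begin{bmatrix} u(y_m)\\ u(y_{m+1})\end{bmatrix}=\B^m\begin{bmatrix}a_0\\ \lambda\end{bmatrix}+\sum_{k=1}^m \B^{m-k}\A\begin{bmatrix}a_k\\ a_{k+1}\end{bmatrix}$ (with $\B$ having eigenvalues $3$ and $1/5$) and then reorganizes the auxiliary combinations $c_k=5a_{k+1}+3a_k$ back into $a_k$, whereas you work directly with the characteristic roots $3,\,1/5$ and discrete variation of parameters. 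Both computations are the same linear algebra in different clothing, and your Casoratian remark explains the $1/14$ in exactly the way the paper's diagonalization does implicitly.
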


\begin{proof}
By Lemma \ref{bvp L1}, $u_\lambda$ must satisfy the recurrence (\ref{bvp recurrence}). The recurrence is linear, so we can formulate the equation in terms of matrices. Define 
\[
\A =
\begin{bmatrix} 
0 &0 \\ 
-\frac{3}{5} &-1 
\end{bmatrix}
\indent \text{and} \indent
\B =
\begin{bmatrix} 
0 &1 \\ 
-\frac{3}{5} & \frac{16}{5} 
\end{bmatrix}. 
\]
Then the recurrence can be written as  
\[
\begin{bmatrix}
u_\lambda(y_m) \\
u_\lambda(y_{m+1})
\end{bmatrix}
= \B^m 
\begin{bmatrix} 
a_0 \\ 
\lambda 
\end{bmatrix} 
+ \sum_{k =1}^m 
\B^{m-k} \A 
\begin{bmatrix}
a_k \\
a_{k+1}
\end{bmatrix}.
\]
Solving the system, we find that
\[
u_\lambda(y_m) 
= 3^m \(\frac{1}{14} \) \(5\lambda -a_0 - \sum_{k=1}^{m-1} \frac{1}{3^k} c_k\) + \frac{1}{5^m} \(\frac{1}{14} \) \(- 5\lambda + 15 a_0  + \sum_{k=1}^{m-1} 5^k c_k \),
\]
where $c_k = 5a_{k+1} + 3a_k$. We want our formula in terms of $a_k$ rather than $c_k$, so substituting  
\[
\sum_{k=1}^{m-1} 5^k c_k 
= 4 \sum_{k=2}^m 5^k a_k + 15a_1 - 5^m 3a_m
\]
and 
\[
\sum_{k=1}^{m-1} \frac{1}{3^k} c_k 
= 18 \sum_{k=2}^m \frac{1}{3^k} a_k + a_1 - \frac{1}{3^m} 3a_m
\]
into the previous equation for $u_\lambda(y_m)$ yields (\ref{bvp b0}). Extending these values by the harmonic extension algorithm uniquely yields a harmonic function $u_\lambda$ continuous on $\Omega$. 
\end{proof}

Since $u_\lambda$ is a linear combination of a $3^m$ term and a $1/5^m$ term, $u_\lambda$ may blow up at $q_0$. Naturally, we ask whether we can find a $\lambda$ such that $u_\lambda$ is continuous on $\overline \Omega$. 

\begin{lemma} \label{bvp L2}
Suppose $u_\lambda \in C(\Omega)$ satisfies the BVP for convergent $\{a_m\}$. Then $u_\lambda \in C(\overline \Omega)$ if and only if  
\begin{equation} \label{bvp cont}
\lim_{m \to \infty} u_\lambda(y_m) = \lim_{m \to \infty} u_\lambda(x_m).
\end{equation}
\end{lemma}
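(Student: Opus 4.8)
The plan is to reduce the whole question to continuity at the single point $q_0$, the only point of $\overline\Omega$ at which continuity of $u_\lambda$ is not already automatic. First I would record that $u_\lambda$ is harmonic, hence continuous, on each closed cell $\overline Y_m$, that these cells cover $\overline\Omega\setminus\{q_0\}$, and that two consecutive cells meet only at the junction $y_m$, where both assign the common value $u_\lambda(y_m)$. Hence $u_\lambda$ is continuous across every junction and at every $x_m$ and $q_1$, so $u_\lambda\in C(\overline\Omega)$ if and only if $u_\lambda$ extends continuously to $q_0$, i.e.\ the limit of $u_\lambda(p)$ as $p\to q_0$ in $\overline\Omega$ exists.

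The forward implication is then immediate: if $u_\lambda\in C(\overline\Omega)$, then since $x_m\to q_0$ and $y_m\to q_0$, continuity at $q_0$ gives $\lim_m u_\lambda(x_m)=u_\lambda(q_0)=\lim_m u_\lambda(y_m)$, which is exactly (\ref{bvp cont}).

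For the converse the key tool is the maximum/minimum principle on a cell, which I would derive directly from the harmonic extension algorithm: each value it produces is a convex combination---nonnegative weights summing to $1$, as displayed in Figure \ref{harmonic extension}---of the three boundary values of its cell, so by density of $V_*$ every value of $u_\lambda$ on $\overline Y_m$ lies between the minimum and the maximum of $\{a_m,u_\lambda(y_{m-1}),u_\lambda(y_m)\}$. Setting $L:=\lim_m a_m=\lim_m u_\lambda(y_m)$ (these limits exist and agree by hypothesis, using that $\{a_m\}$ converges and $u_\lambda(x_m)=a_m$), this yields
\[
\sup_{p\in\overline Y_m}\bigl|u_\lambda(p)-L\bigr|\le\max\bigl\{|a_m-L|,\,|u_\lambda(y_{m-1})-L|,\,|u_\lambda(y_m)-L|\bigr\}.
\]
I would then define $u_\lambda(q_0):=L$ and verify continuity: given $\ep>0$, pick $M$ so that the right-hand side is below $\ep$ for all $m\ge M$; because $\bigcup_{m<M}\overline Y_m$ is compact and omits $q_0$, it stays a positive distance $\delta$ from $q_0$, so every $p\in\overline\Omega$ with $d(p,q_0)<\delta$ lies in some $\overline Y_m$ with $m\ge M$ and therefore satisfies $|u_\lambda(p)-L|<\ep$.

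The step I expect to be the main obstacle is this final one: converting the hypothesis, which controls $u_\lambda$ only along the two sequences $\{x_m\}$ and $\{y_m\}$, into uniform smallness of $u_\lambda-L$ on whole cells accumulating at $q_0$. The maximum principle is exactly what bridges pointwise boundary control and uniform interior control; the supporting geometric facts---that $\operatorname{diam}\overline Y_m\to 0$ and that $q_0\notin\overline Y_m$ for every finite $m$---follow from $\overline Y_m=F_0^{m-1}F_1\SG$ together with $q_0\notin F_1\SG$.
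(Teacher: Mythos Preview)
Your proof is correct and follows the same line as the paper: reduce to continuity at the single point $q_0$, handle the forward direction by evaluating along the sequences $x_m,y_m\to q_0$, and for the converse use that $u_\lambda$ is harmonic on each cell $\overline Y_m$ so its oscillation is controlled by its boundary values. The paper's own proof is a two-sentence sketch that simply asserts ``(\ref{bvp cont}) implies $u_\lambda$ is continuous at $q_0$''; your maximum-principle argument on the cells $\overline Y_m$ is exactly the detail that justifies this assertion.
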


\begin{proof}
Suppose $u_\lambda \in C(\overline \Omega)$ solves the BVP. Then $u_\lambda$ is continuous at $q_0$, which implies (\ref{bvp cont}).  Conversely, it is easy to see that $q_0$ is the only point at which $u_\lambda$ can be discontinuous. Then (\ref{bvp cont}) implies $u_\lambda$ is continuous at $q_0$, which shows that $u_\lambda \in C(\overline \Omega)$. 
\end{proof}

\begin{theorem} \label{bvp T2}
If $a_m \to 0$ as $m \to \infty$, then the function $u$ given by the harmonic extension of $u(x_m) = a_m$, 
\begin{equation} \label{bvp b1}
u(y_1) = \frac{1}{5}\(a_0 + a_1 + 18 \sum_{k=2}^\infty \frac{1}{3^k} a_k\),
\end{equation}
and (for $m \geq 2$)
\begin{align} \label{bvp bm}
u(y_m) = \frac{1}{5^m} \( a_0 - \frac{9}{7} \sum_{k=1}^\infty \frac{1}{3^k} a_k + \frac{2}{7} \sum_{k=1}^m 5^k a_k \) + \frac{9}{7} \sum_{k=1}^\infty \frac{1}{3^k} a_{m+k}
\end{align}
solves the BVP. Furthermore, this function is the unique solution in  $C(\overline \Omega)$.
\end{theorem}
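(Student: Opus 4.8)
The plan is to specialize the one-parameter family $u_\lambda$ from Theorem~\ref{bvp T1} to the unique value of $\lambda$ for which $u_\lambda$ extends continuously to $q_0$, and then to rewrite the resulting expression in the stated closed form. Both the existence and the uniqueness assertions will fall out of the same computation, since Theorem~\ref{bvp T1} already tells us the $C(\Omega)$ solution space is exactly the line $\{u_\lambda\}$.

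First I would examine the two terms of (\ref{bvp b0}) as $m\to\infty$. Because $a_m\to 0$, the $3^{-k}$-weighted series converges absolutely, so $F_m(\lambda)\to F_\infty(\lambda):=\frac{1}{14}(5\lambda - a_0 - a_1 - 18\sum_{k=2}^\infty 3^{-k}a_k)$, and hence $3^m F_m(\lambda)$ blows up or tends to $0$ according to whether $F_\infty(\lambda)\neq 0$ or $F_\infty(\lambda)=0$. The $1/5^m$ term tends to $0$ regardless: the only nontrivial piece is $5^{-m}\sum_{k=2}^m 5^k a_k=\sum_{j=0}^{m-2}5^{-j}a_{m-j}$, which goes to $0$ by dominated convergence since $a_m\to 0$ and the weights $5^{-j}$ are summable. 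Since $u_\lambda(x_m)=a_m\to 0$, Lemma~\ref{bvp L2} says continuity at $q_0$ is equivalent to $\lim_m u_\lambda(y_m)=0$, which therefore forces $F_\infty(\lambda)=0$, i.e. $\lambda=\frac{1}{5}(a_0 + a_1 + 18\sum_{k=2}^\infty 3^{-k}a_k)$, exactly (\ref{bvp b1}). As $F_\infty$ is affine in $\lambda$ with nonzero slope $5/14$, this value is the unique admissible one; this is precisely the uniqueness claim, and it also produces the continuous solution.

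Next I would substitute this $\lambda$ back into (\ref{bvp b0}) to recover (\ref{bvp bm}). Using $F_\infty(\lambda)=0$ I can write $F_m(\lambda)=F_m(\lambda)-F_\infty(\lambda)=\frac{9}{7}\sum_{k=m+1}^\infty 3^{-k}a_k$, so that $3^m F_m(\lambda)=\frac{9}{7}\sum_{k=1}^\infty 3^{-k}a_{m+k}$, which is the tail term of (\ref{bvp bm}); and inserting (\ref{bvp b1}) into $G_m(\lambda)$ and collecting the $a_0$, $a_1$, and $5^k a_k$ contributions yields the first bracketed term. Both steps are reindexings of absolutely convergent sums. Finally $\lim_m u(y_m)=0=\lim_m u(x_m)$ confirms via Lemma~\ref{bvp L2} that $u\in C(\overline\Omega)$, while harmonicity and the prescribed boundary values hold by construction from Theorem~\ref{bvp T1}.

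The main obstacle is not conceptual but the careful bookkeeping in the series rearrangements — peeling off the $k=1$ terms from the sums and verifying that the rational coefficients $9/7$ and $2/7$ emerge correctly — together with justifying each interchange of limit and summation using only $a_m\to 0$. That hypothesis gives absolute convergence of the $3^{-k}$-weighted tails directly, but for the $5^k$-weighted partial sums it is the prefactor $5^{-m}$ that supplies the decay, so these two types of terms must be handled separately.
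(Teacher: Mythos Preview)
Your argument is correct and follows the same overall strategy as the paper: specialize the one-parameter family $u_\lambda$ of Theorem~\ref{bvp T1} to the value of $\lambda$ dictated by continuity at $q_0$, then verify the closed form. The algebraic manipulations you outline (splitting $F_m(\lambda)=F_m(\lambda)-F_\infty(\lambda)$ to extract the tail, and peeling off the $k=1$ terms in $G_m(\lambda)$) are exactly what the paper does when it writes ``Substituting (\ref{bvp b1}) into (\ref{bvp b0}) yields (\ref{bvp bm}).''

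The one genuine difference is in the uniqueness argument. The paper appeals to the maximum principle for harmonic functions on $\overline\Omega$ (citing \cite{strichartz2}) to conclude that the $C(\overline\Omega)$ solution is unique. You instead observe directly that any $C(\overline\Omega)$ solution lies in the line $\{u_\lambda\}$ by Theorem~\ref{bvp T1}, and that $F_\infty(\lambda)\neq 0$ forces $3^mF_m(\lambda)$ to diverge, so exactly one $\lambda$ yields continuity at $q_0$. Your route is more self-contained, avoiding the external maximum-principle reference; the paper's route is shorter once that principle is available and generalizes more readily (as in Corollary~\ref{bvp C1}). Both are valid.
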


\begin{proof}
Substituting (\ref{bvp b1}) into (\ref{bvp b0}) yields (\ref{bvp bm}). By triangle inequality, 
\[
|u(y_m)| 
\leq 
\frac{1}{5^m} \(|a_0| + \frac{9}{7} \sum_{k=1}^\infty \frac{1}{3^k} |a_k| + \frac{2}{7} \sum_{k=1}^m 5^k |a_k| \) + \frac{9}{7} \sum_{k=1}^\infty \frac{1}{3^k} |a_{m+k}|.
\]
We claim that $|u(y_m)| \to 0$ as $m \to \infty$. Clearly the first term tends to zero in the limit. The second term tends to zero because convergent sequences are bounded. Since both the boundary data and $1/5^m$ converge to zero, for all $\ep > 0$, there exists $M$ such that for all $m \geq M$, we have $|a_m| < \ep$ and $1/5^m < \ep$. For $m \geq M$, we see that
\[
\sum_{k=1}^\infty \frac{1}{3^k} |a_{m+k}| 
\leq \ep \sum_{k=1}^\infty \frac{1}{3^k} 
= \frac{\ep}{2}
\]
and
\[
\frac{1}{5^m} \sum_{k=1}^m 5^k |a_k|
= \frac{1}{5^m} \sum_{k=1}^M 5^k |a_k| + \sum_{k=M+1}^m 5^{k-m} |a_k| 
\leq C_1 \ep \( \max_{1 \leq k \leq M} |a_k|\) + C_2 \ep.
\]
Therefore $u$ satisfies condition (\ref{bvp cont}) and by Lemma \ref{bvp L2}, $u \in C(\overline \Omega)$. Since harmonic functions that are continuous up to the boundary satisfy the maximum principle \cite{strichartz2}, uniqueness follows from the standard uniqueness argument for linear differential equations that satisfy the maximum principle.
\end{proof}

\begin{corollary} \label{bvp C1}
If $a_m \to A$ as $m \to \infty$ for some constant $A$, then the function $u$ given by the harmonic extension of $u(x_m) = a_m$, 
\begin{equation} \label{bvp u1}
u(y_1) = \frac{1}{5} \( a_0 + a_1 + 18 \sum_{k=2}^\infty \frac{1}{3^k} a_k \),
\end{equation}
and (for $m \geq 2$)
\begin{equation} \label{bvp um}
u(y_m) = \frac{1}{5^m} \( a_0 - \frac{9}{7} \sum_{k=1}^\infty \frac{1}{3^k} a_k + \frac{2}{7} \sum_{k=1}^m 5^k a_k \) + \frac{9}{7} \sum_{k=1}^\infty \frac{1}{3^k} a_{m+k}
\end{equation}
solves the BVP. Furthermore, this function is the unique solution in $C(\overline \Omega)$.  
\end{corollary}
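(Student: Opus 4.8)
The plan is to reduce the general case $a_m \to A$ to the already-established case $a_m \to 0$ of Theorem \ref{bvp T2} by subtracting the constant $A$. First I would set $b_m = a_m - A$ for all $m \geq 0$, so that $b_m \to 0$ and Theorem \ref{bvp T2} applies to the data $\{b_m\}$, producing a solution $v \in C(\overline\Omega)$ to the BVP with that data. Since the constant function $A$ is harmonic on $\Omega$ (its graph energy vanishes) and takes the value $A$ at every boundary point, the function $u := v + A$ satisfies $\triangle u = 0$ on $\Omega$, $u(q_1) = v(q_1) + A = b_0 + A = a_0$, and $u(x_m) = v(x_m) + A = b_m + A = a_m$. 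Hence $u$ solves the BVP for the original data $\{a_m\}$.

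It remains to check that this $u = v + A$ is precisely the harmonic extension described by (\ref{bvp u1}) and (\ref{bvp um}). Because the harmonic extension algorithm is linear and the constant $A$ is its own harmonic extension, $u$ is the harmonic extension of the values $v(y_1) + A$ and $v(y_m) + A$, where $v(y_1)$ and $v(y_m)$ are given by the formulas of Theorem \ref{bvp T2} with each $a_k$ replaced by $b_k = a_k - A$. The task is therefore to verify that the net contribution of $A$ to each of these formulas is exactly $-A$, so that adding $A$ back recovers (\ref{bvp u1}) and (\ref{bvp um}). For $v(y_1)$ this uses $18 \sum_{k=2}^\infty 3^{-k} = 3$, giving an $A$-contribution $\frac{1}{5}(-A - A - 3A) = -A$. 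For $v(y_m)$ with $m \geq 2$ one collects the $A$-terms arising from $b_0$, from $-\frac{9}{7}\sum_{k=1}^\infty 3^{-k} b_k$, from $\frac{2}{7}\sum_{k=1}^m 5^k b_k$, and from $\frac{9}{7}\sum_{k=1}^\infty 3^{-k} b_{m+k}$; using $\sum_{k=1}^\infty 3^{-k} = \frac{1}{2}$ and $\sum_{k=1}^m 5^k = \frac{5^{m+1}-5}{4}$, the three contributions inside the $5^{-m}$ factor sum to $-\tfrac{5A}{14}$ while the tail term contributes $-\tfrac{9A}{14}$, which combine to $-A$ as well. Adding the $A$ from the definition $u = v+A$ cancels this in each case, so $u(y_1)$ and $u(y_m)$ agree with the stated formulas.

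Finally, uniqueness in $C(\overline\Omega)$ transfers directly: if $u_1, u_2 \in C(\overline\Omega)$ both solve the BVP for $\{a_m\}$, then $u_1 - A$ and $u_2 - A$ both lie in $C(\overline\Omega)$ and solve the BVP for $\{b_m\}$, so by the uniqueness assertion of Theorem \ref{bvp T2} they coincide, whence $u_1 = u_2$. I expect the only mildly delicate point to be the bookkeeping of the geometric-series contributions in the second paragraph; everything else is a formal consequence of linearity together with the $A=0$ case already proved.
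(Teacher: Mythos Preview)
Your proposal is correct and follows essentially the same approach as the paper: subtract the constant $A$, apply Theorem~\ref{bvp T2} to the shifted data, and add $A$ back. The paper is terser, simply asserting that the formula for $w(y_m)$ is (\ref{bvp bm}) under $a_k \mapsto a_k - A$ and then invoking the maximum principle for uniqueness, whereas you explicitly carry out the geometric-series bookkeeping to verify the $A$-contributions cancel and deduce uniqueness by reducing to the uniqueness in Theorem~\ref{bvp T2}; both routes are equivalent.
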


\begin{proof}
Consider the modified BVP 
\begin{equation} \label{bvp modified}
\begin{cases}
\ \triangle u = 0 			&\text{on } \Omega, \\
\ u(q_1) = a_0 - A 		&\text{on } \partial \Omega, \\
\ u(x_m) = a_m - A 		&\text{on } \partial \Omega.
\end{cases}
\end{equation}
Since $a_m - A \to 0$, the hypotheses of Theorem \ref{bvp T2} are satisfied. Then there exists $w \in C(\overline \Omega)$ that solves (\ref{bvp modified}) and the formula for $w(y_m)$ is given by (\ref{bvp bm}) under the map $a_k \mapsto a_k - A$. By construction, the function $u = w + A$ solves the BVP with $u \in C(\overline \Omega)$. The maximum principle implies that $u$ is unique. 
\end{proof}

\section{Energy Estimate} \label{energy}

In this section, we look to answer questions regarding the energy of the $C(\overline \Omega)$ solution to the BVP. In particular, is the energy always finite and if not, can we characterize functions of finite energy in terms of a condition on the boundary data? Our main theorem shows that harmonic functions on $\Omega$ do not necessarily have finite energy and provides a simple characterization. 

Given a function $u$, we say $u \in \dom \E$ if and only if $\E(u) < \infty$. Following standard notation, $\dom_0\E$ is the space of functions that have finite energy and vanish on the boundary $V_0$. It is known that $\dom\E \subset C(\SG)$ and in fact, is a dense subset. 

Suppose $u$ is a piecewise harmonic function on $\Omega$ that is harmonic on each $Y_m$ with data given by (\ref{bvp}). Then the energy of $u$ restricted to $Y_m$ is constant after level $m$ and is determined by $u(y_m)$, $u(y_{m-1})$, and $a_m$. It follows that 
\[
\left. \E(u) \right|_{Y_m} 
= \(\frac{5}{3}\)^m [(u(y_m) - u(y_{m-1}))^2 + (u(y_m) - a_m)^2 + (u(y_{m-1}) - a_m)^2],
\]
where it is understood that $u(y_0) = u(q_1) = a_0$. Then $\E(u)$ is the sum of the energy of each cell,
\begin{equation} \label{energy eq1}
\E(u) 
= \sum_{m=1}^\infty \(\frac{5}{3}\)^m [(u(y_m) - u(y_{m-1}))^2 + (u(y_m) - a_m)^2 + (u(y_{m-1}) - a_m)^2].
\end{equation}
If we add the additional assumption that $u \in C(\overline \Omega)$ solves the BVP, then an equation for $\E(u)$ as a function of $\{a_m\}$ can be obtained by substituting (\ref{bvp u1}) and (\ref{bvp um}) into (\ref{energy eq1}). However, $\E(u)$ is series of quadratic terms of series, which is too complicated to analyze directly. Instead, we estimate it. 

\begin{lemma} \label{energy L1}
Suppose $u \in C(\overline \Omega)$ solves the BVP with convergent $\{a_m\}$. Then we have the energy estimate
\[
C_1 \sum_{m=1}^\infty \(\frac{5}{3}\)^m (a_{m+1} - a_m)^2  
\leq \E(u) \leq
C_2 \sum_{m=1}^\infty \(\frac{5}{3}\)^m (a_{m+1} - a_m)^2.
\]
\end{lemma}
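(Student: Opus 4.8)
The plan is to rewrite $\E(u)$ in a form that isolates the differences $u(y_m)-a_m$, prove the lower bound by a short triangle-inequality argument, and obtain the upper bound by comparing $u$ with an explicit competitor via its variational characterization. First, starting from (\ref{energy eq1}) and using $u(y_m)-u(y_{m-1})=(u(y_m)-a_m)-(u(y_{m-1})-a_m)$, each cell's contribution is the quadratic form $(p-q)^2+p^2+q^2=2(p^2-pq+q^2)$ in $p=u(y_m)-a_m$ and $q=u(y_{m-1})-a_m$, which is positive definite with eigenvalues between $1$ and $3$. Hence
\[
\E(u)\asymp\sum_{m=1}^\infty\(\frac{5}{3}\)^m\big[(u(y_m)-a_m)^2+(u(y_{m-1})-a_m)^2\big],
\]
with universal constants, so it suffices to estimate the right-hand side.

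For the lower bound I would write $a_{m+1}-a_m=(u(y_m)-a_m)-(u(y_m)-a_{m+1})$ and use $(s+t)^2\le 2s^2+2t^2$ to get
\[
\(\frac{5}{3}\)^m(a_{m+1}-a_m)^2\le 2\(\frac{5}{3}\)^m(u(y_m)-a_m)^2+2\(\frac{5}{3}\)^m(u(y_m)-a_{m+1})^2.
\]
The first summand is twice one of the three non-negative terms of $\E(u)|_{Y_m}$, and, after writing $(5/3)^m=\tfrac35(5/3)^{m+1}$, the second is $\tfrac65$ times one of the terms of $\E(u)|_{Y_{m+1}}$ (recognizing $u(y_m)-a_{m+1}$ as a boundary difference of the cell $Y_{m+1}$). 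Summing in $m$ and bounding each term of each cell energy by $\E(u)$ yields $\sum_m(5/3)^m(a_{m+1}-a_m)^2\le\tfrac{16}{5}\E(u)$, i.e.\ the lower bound with $C_1=\tfrac{5}{16}$. This direction uses only the cell decomposition of the energy, not the explicit solution.

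For the upper bound I would use that, among all finite-energy functions on $\overline\Omega$ matching the data on $X\cup\{q_1\}$, our $u$ is the unique energy minimizer: it satisfies $\triangle u(y_m)=0$ at every junction $y_m$ by Lemma \ref{bvp L1}, whereas the other members of the family $u_\lambda$ from Theorem \ref{bvp T1} have infinite energy. Thus $\E(u)\le\E(v)$ for the explicit competitor $v$ that is harmonic on each $\overline Y_m$ with $v(q_1)=a_0$, $v(x_m)=a_m$, and $v(y_m)=a_{m+1}$; computing its cell energies gives $\E(v)=2\sum_{m\ge1}(5/3)^m(a_{m+1}-a_m)^2$ plus one fixed finite term coming from the first cell $Y_1$.

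The hard part is this first-cell bookkeeping: the energy of $Y_1$ in any competitor necessarily involves $a_0=u(q_1)$, so the two-sided comparison is clean only up to a finite boundary term depending on $a_0,a_1,a_2$, which does not affect finiteness of the energy (matching the claim that finiteness depends on the rate of convergence of $\{a_m\}$, not its limit). A more computational alternative for the upper bound is to substitute the explicit formula (\ref{bvp um}), expand $u(y_m)-a_m$ as a weighted average of the increments $a_{j+1}-a_j$ (the kernels $5^{-m}\sum_{k\le m}5^k(\cdot)$ and $\sum_k 3^{-k}(\cdot)_{m+k}$ each having total mass $1$), and then invoke the discrete weighted Hardy-type summation inequalities of Section \ref{appendix}; there the delicate point is the boundedness of these averaging operators against the weight $(5/3)^m$, where the geometric factor $5^k$ must be balanced against $(5/3)^m$.
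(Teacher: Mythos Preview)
Your argument is correct and follows the paper's overall strategy: a direct estimate for the lower bound and a competitor comparison for the upper bound. The details differ slightly. For the lower bound, the paper simply drops the cross term $(u(y_m)-u(y_{m-1}))^2$ from (\ref{energy eq1}) and then minimizes the remaining quadratic in the free variable $u(y_m)$, obtaining the optimal plug-in $u(y_m)=\tfrac18(5a_{m+1}+3a_m)$; your triangle-inequality route is an equally valid alternative and avoids the calculus step. For the upper bound, the paper uses the competitor $w(y_m)=a_m$ rather than your $v(y_m)=a_{m+1}$, and justifies $\E(u)\le\E(w)$ by the one-line Dirichlet principle (``$u$ is a global harmonic function while $w$ is piecewise harmonic''); your detour through the family $u_\lambda$ reaches the same conclusion but is unnecessarily indirect, since convexity of $\E$ plus $\triangle u=0$ already gives the inequality. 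Your observation about the first-cell bookkeeping is apt: the paper's own computation also carries an additive $(a_1-a_0)^2$ term on both sides, which is harmless for the equivalence-of-finiteness application in Theorem~\ref{energy T1} but is not literally absorbed into the stated two-sided bound.
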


\begin{proof} 
We prove the lower bound first. By ignoring the first term of (\ref{energy eq1}), we have 
\begin{align*}
\E(u) 
&\geq \sum_{m=1}^\infty \(\frac{5}{3}\)^m \left[ (u(y_m) - a_m)^2 + (u(y_{m-1}) -a_m)^2 \right] \\
&=\sum_{m=1}^\infty \(\frac{5}{3}\)^m (u(y_m)-a_m)^2+\sum_{m=0}^\infty \Big(\frac{5}{3}\Big)^{m+1}(u(y_m)-a_{m+1})^2.
\end{align*}
Using basic calculus, we find that $u(y_m) = (1/8)(5a_{m+1} + 3 a_m)$
minimizes the previous series. Substituting this value of $u(y_m)$ into the previous inequality, we obtain
\[ 
\sum_{m=1}^\infty \(\frac{5}{3}\)^m \frac{5}{8}(a_{m+1} -a_m)^2 + \frac{5}{3} (a_1 - a_0)^2 
\leq \E(u). 
\]
For the upper bound, consider the piecewise harmonic function $w$ given by the harmonic extension of $w(x_m)= w(y_m) = a_m$ and $w(q_1) = a_0$. Since $u$ is a global harmonic function while $w$ is a piecewise harmonic function, we have $\E(u) \leq \E(w)$. Note that $\E(w)$ is given by (\ref{energy eq1}) because $w$ is a piecewise harmonic function satisfying the boundary conditions. Then
\[ 
\E(u)
\leq \E(w) 
= \sum_{m=1}^\infty \(\frac{5}{3}\)^m \frac{10}{3}(a_{m+1} - a_m)^2 + \frac{10}{3} (a_1 - a_0)^2,
\]
which completes the proof.
\end{proof}

\begin{theorem} \label{energy T1}
Suppose $u \in C(\overline \Omega)$ solves the BVP with convergent boundary data $a_m \to A$. Then $u \in \dom \E$ if and only if $\|(5/3)^{m/2} (a_m-A)\|_{\ell^2} < \infty$. Additionally, we have the upper bound $\E(u) \leq C\|(5/3)^{m/2} (a_m-A)\|_{\ell^2}$.
\end{theorem}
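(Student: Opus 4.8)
The plan is to reduce the whole statement to a purely sequential fact: that for $r = 5/3$ the two weighted sums
\[
\sum_{m=1}^\infty r^m (a_{m+1}-a_m)^2 \quad\text{and}\quad \sum_{m=1}^\infty r^m (a_m - A)^2
\]
are simultaneously finite, and in fact comparable. First I would dispose of the limit $A$. Since $\E$ depends only on differences $u(x)-u(y)$ it is unchanged by adding a constant to $u$, and by Corollary \ref{bvp C1} the $C(\overline\Omega)$ solution with data $\{a_m\}$ is exactly $A$ plus the solution with data $\{a_m - A\}$. Hence I may assume without loss of generality that $A = 0$ and $a_m \to 0$. Under this normalization Lemma \ref{energy L1} already gives $\E(u)$ comparable to $\sum_m r^m (a_{m+1}-a_m)^2$, so everything comes down to comparing the difference sum with $\sum_m r^m a_m^2$ (finitely many low-order terms, e.g. the contribution of $a_0$, only affect the constants and not convergence).

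The easy direction is a weighted triangle inequality. Writing $\|r^{m/2}(a_{m+1}-a_m)\|_{\ell^2} \le \|r^{m/2} a_{m+1}\|_{\ell^2} + \|r^{m/2} a_m\|_{\ell^2}$ and using the index shift $\sum_m r^m a_{m+1}^2 = r^{-1}\sum_{j\ge 2} r^j a_j^2$, one gets $\sum_m r^m (a_{m+1}-a_m)^2 \le C \sum_m r^m a_m^2$ at once. Chaining this with the upper bound of Lemma \ref{energy L1} produces the asserted estimate on $\E(u)$ and one implication of the equivalence.

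The substantive direction, and the main obstacle, is the reverse weighted Hardy inequality $\sum_m r^m a_m^2 \le C \sum_m r^m (a_{m+1}-a_m)^2$, which holds precisely because $a_m\to 0$ and $r>1$. Here I would use the telescoping representation $a_m = -\sum_{k\ge m} d_k$ with $d_k = a_{k+1}-a_k$ (legitimate since $a_m \to 0$), and then apply Cauchy--Schwarz with a tuned exponent: for a fixed $\alpha \in (0,\tfrac12)$ split $d_k = r^{-\alpha k}\cdot r^{\alpha k} d_k$ to obtain
\[
a_m^2 \le \left(\sum_{k\ge m} r^{-2\alpha k}\right)\left(\sum_{k\ge m} r^{2\alpha k} d_k^2\right) \le C_\alpha\, r^{-2\alpha m}\sum_{k\ge m} r^{2\alpha k} d_k^2 .
\]
Multiplying by $r^m$, summing over $m$, and interchanging the order of summation leaves the geometric sum $\sum_{m\le k} r^{(1-2\alpha)m} \le C'_\alpha\, r^{(1-2\alpha)k}$, finite exactly because $1-2\alpha > 0$, which absorbs the residual powers and yields $\sum_m r^m a_m^2 \le C\sum_k r^k d_k^2$. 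The delicate point is the choice $0 < \alpha < \tfrac12$: the tail $\sum_{k\ge m} r^{-2\alpha k}$ needs $\alpha > 0$ while the partial sum $\sum_{m\le k} r^{(1-2\alpha)m}$ needs $1-2\alpha > 0$, and it is the rapid growth of the weight $r^m$ with $r>1$ that makes such an $\alpha$ available.

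This inequality is of the type collected in Section \ref{appendix}, so I would either cite it from there or insert the short computation above. Combining it with Lemma \ref{energy L1} gives the full equivalence $\E(u) < \infty \iff \|(5/3)^{m/2}(a_m-A)\|_{\ell^2} < \infty$, and, undoing the normalization $a_m \mapsto a_m - A$, the chain $\E(u) \le C_2 \sum_m r^m (a_{m+1}-a_m)^2 \le C \sum_m r^m (a_m-A)^2$ delivers the stated upper bound $\E(u) \le C\,\|(5/3)^{m/2}(a_m-A)\|_{\ell^2}^2$.
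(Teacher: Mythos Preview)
Your proposal is correct and follows essentially the same route as the paper: invoke Lemma~\ref{energy L1} to reduce to the sequential equivalence between $\|(5/3)^{m/2}(a_{m+1}-a_m)\|_{\ell^2}$ and $\|(5/3)^{m/2}(a_m-A)\|_{\ell^2}$, which is exactly Lemma~\ref{series L2} in the appendix. The only cosmetic differences are that you normalize to $A=0$ up front and that your proof of the ``substantive direction'' uses a parametrized Cauchy--Schwarz split rather than the paper's Minkowski argument; both yield the same inequality with comparable effort.
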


\begin{proof}
Suppose $u \in C(\overline \Omega)$ solves the BVP with convergent boundary data $a_m \to A$. Lemma \ref{energy L1} says that $\E(u)<\infty$ if and only if $\|(5/3)^{m/2}(a_{m+1}-a_m)\|_{\ell^2} <\infty$. Applying Lemma \ref{series L2} yields the desired statement. 
\end{proof}

\section{Normal Derivatives} \label{normal}

Although the normal derivative and the (standard) Laplacian on $\SG$ are defined independently, they are closely connected via the Gauss-Green formula. 

For a continuous function $u$, its normal derivative at $q_j \in V_0$, denoted $\partial_n u(q_j)$, is defined to be
\begin{equation} \label{normal eq1}
\partial_n u(q_j) = \lim_{m \to \infty} \(\frac{5}{3}\)^m \left[ 2 u(q_j) - u(F_j^m q_{j+1}) - u(F_j^m q_{j-1}) \right]. 
\end{equation}
We say $\partial_n u(q_j)$ exists if the above limit exists. In the special case $u$ is harmonic, we have the simplified formula
\begin{equation} \label{normal eq2}
\partial_n u(q_j) = 2 u(q_j) - u(q_{j-1}) - u(q_{j+1}).
\end{equation} 
The formula for the normal derivative of a harmonic function at a boundary point of a cell is similar to the above formula, except we require a renormalization factor depending on the level. A junction point is a boundary point of two adjacent cells of the same level, and the normal derivative with respect to the cells will differ by a minus sign. If we need to distinguish between the two normal derivatives at a junction point, we use either $(\leftarrow, \rightarrow)$, $(\nearrow, \swarrow)$ or $(\nwarrow, \searrow)$, corresponding to the geometrical notion of a normal derivative. 

\begin{proposition}
Suppose $u \in \dom \triangle$. Then at each junction point, the local normal derivatives exist and $\nearrow \partial_n u \ + \swarrow \partial_n u = 0$. This is called the matching condition for normal derivatives.
\end{proposition}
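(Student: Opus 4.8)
The plan is to make the Gauss--Green (integration by parts) formula the central tool, reducing everything first to a single boundary vertex and then extracting the matching relation from a cancellation. First I would record the self-similar scaling of the Laplacian: if $u \in \dom\triangle$ and $w$ is a word of length $m$, then $u \circ F_w \in \dom\triangle(\SG)$ with $\triangle(u\circ F_w) = 5^{-m}(\triangle u)\circ F_w$. A junction point $p$ of level $m$ is the common boundary vertex of exactly two $m$-cells $C_1 = F_{w_1}\SG$ and $C_2 = F_{w_2}\SG$, and the local normal derivative of $u$ at $p$ with respect to $C_i$ is, up to the fixed renormalization, the boundary normal derivative of $u\circ F_{w_i}$ at the corresponding vertex $q_{j_i}\in V_0$. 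Thus it suffices to prove both assertions — existence and the sign-reversing matching — for a boundary vertex $q_j$ of $\SG$.

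For existence at $q_j$, I would use the decomposition $u = h + Gf$, where $f = \triangle u$, $h$ is the harmonic function agreeing with $u$ on $V_0$, and $G$ is the Green's operator, so that $Gf$ vanishes on $V_0$ and has Laplacian $f$. The normal derivative of the harmonic part is given exactly by the algebraic formula \eqref{normal eq2}, so only $\partial_n(Gf)(q_j)$ is at issue. Here I would write $\partial_n(Gf)(q_j)$ as the integral of $f$ against the normal derivative, in the first variable, of the Green's function $g(\cdot,y)$ at $q_j$, and invoke the pointwise bounds on $g$ and its boundary normal derivative established in Section~\ref{appendix}; these dominate the integrand uniformly in $y$, so the defining limit \eqref{normal eq1} converges and $\partial_n u(q_j)$ exists. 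This Green's-function estimate is the one genuinely technical input, and I expect it to be the main obstacle.

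With existence in hand, the matching condition follows from a clean cancellation. Fix the junction point $p$ and let $v$ be the level-$m$ harmonic spline equal to $1$ at $p$ and $0$ at every other vertex of $V_m$; then $v\in\dom\E$, $v$ is harmonic on each cell, and $v$ is supported on $C_1\cup C_2$ with $v|_{V_0}=0$. Applying the global Gauss--Green formula gives $\E(u,v) = -\int_\SG v\,\triangle u\,d\mu$, since all $V_0$ boundary terms vanish. Applying Gauss--Green on each cell $C_i$ separately — legitimate now that the local normal derivatives exist — and using $v(p)=1$ together with $v=0$ at the other boundary vertices of $C_i$, gives
\[
\E(u,v) = {}_{C_1}\partial_n u(p) + {}_{C_2}\partial_n u(p) - \int_{C_1\cup C_2} v\,\triangle u\,d\mu .
\]
Since $\E(u,v) = \E_{C_1}(u,v)+\E_{C_2}(u,v)$ by additivity of energy over the support of $v$, comparing the two expressions forces ${}_{C_1}\partial_n u(p) + {}_{C_2}\partial_n u(p) = 0$; the sign conventions $(\nearrow,\swarrow)$, $(\nwarrow,\searrow)$, or $(\leftarrow,\rightarrow)$ then identify this with $\nearrow\partial_n u + \swarrow\partial_n u = 0$. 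Note that the sign convention chosen in the Gauss--Green formula is immaterial, since the volume term appears identically in the global and cell-wise versions and cancels.
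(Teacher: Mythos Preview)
The paper does not prove this proposition at all: it is one of the results explicitly flagged in the introduction as a ``known fact'' imported from the textbooks \cite{kigami2} and \cite{strichartz}, and accordingly it is stated without proof. So there is no paper argument to compare against.

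That said, your sketch is essentially the standard textbook proof and is correct in outline. The reduction to a boundary vertex via the scaling $\triangle(u\circ F_w)=5^{-m}(\triangle u)\circ F_w$ is the right move; the existence argument via $u=h+Gf$ is the usual one, though in the textbook treatment the convergence of $\partial_n(Gf)(q_j)$ is typically obtained not from pointwise Green's-function bounds but directly from the Gauss--Green identity applied on $F_j^m\SG$, which expresses the level-$m$ approximant in \eqref{normal eq1} as a boundary term that converges because the cell integrals of $\triangle u$ shrink. Your matching argument with the tent spline $v$ is exactly the standard one and is clean as written. One small caution: when you invoke ``the pointwise bounds on $g$ and its boundary normal derivative established in Section~\ref{appendix},'' note that the appendix here only computes $G$ at the specific points $x_m$ and $z_m$ along the symmetry line, not general boundary normal derivatives of $G$; you would need to supply that estimate separately or switch to the Gauss--Green route for existence.
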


The Laplacian of a function is defined in terms of its weak formulation. First, we define the (symmetric) bilinear form of the energy: given functions $u, v$ and integer $m$, the bilinear form of the energy is
\[
\E_m(u, v) = \sum_{x \sim_m y} \(\frac{5}{3}\)^m [u(x) - u(y)] [v(x) - v(y)].
\]
$\SG$ has a unique symmetric self-similar probability measure that we denote $dx$. Then the Laplacian can be defined as follows. Suppose $u \in \dom\E$ and $f$ is continuous. Then we say $u \in \dom \triangle$ with $\triangle u = f$ if
\[
\E(u,v) = - \int_\SG f(x) v(x) \ dx 
\]
for all $v \in \dom_0\E$ (functions in $\dom \E$ vanishing on $V_0$). Since $\E(u, v) = \E(v, u)$, subtracting the Gauss-Green formula from its transposed version yields the symmetric Gauss-Green formula
\begin{equation} 
\label{gauss-green}
\int_{\SG} (\triangle u v - u \triangle v) \ dx - \sum_{V_0} (v \partial_n u - u \partial_n v) = 0.
\end{equation}
The following result relates the normal derivatives of a function with its Laplacian.

\begin{proposition} [Gauss-Green]
Suppose $u \in \dom \triangle$. Then $\partial_n u$ exists on $V_0$ and the Gauss-Green formula, 
\[
\E(u, v) = - \int_{\SG} \triangle u v \ dx + \sum_{V_0} v \partial_n u,
\]
holds for all $v \in \dom \E$. 
\end{proposition}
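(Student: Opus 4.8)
The plan is to pass to the limit in a discrete, graph-level Gauss-Green identity obtained by summation by parts on each approximating graph $\Gamma_m$. Write $\triangle_m u(x) = \sum_{y \sim_m x} (u(y) - u(x))$ for the level-$m$ graph Laplacian and $\partial_n^{(m)} u(q_j) = (5/3)^m [2u(q_j) - u(F_j^m q_{j+1}) - u(F_j^m q_{j-1})]$ for the level-$m$ normal derivative, so that $\partial_n u(q_j) = \lim_{m \to \infty} \partial_n^{(m)} u(q_j)$ by (\ref{normal eq1}). Rearranging each edge contribution $(5/3)^m [u(x)-u(y)][v(x)-v(y)]$ so that it is collected at the vertices, and noting that at a boundary vertex $q_j$ the graph Laplacian $\triangle_m u(q_j)$ runs only over its two neighbors in $\Gamma_m$ and hence reproduces $\partial_n^{(m)} u(q_j)$, one obtains the elementary identity
\[
\E_m(u,v) = -\(\frac{5}{3}\)^m \sum_{x \in V_m \setminus V_0} v(x)\, \triangle_m u(x) + \sum_{q_j \in V_0} v(q_j)\, \partial_n^{(m)} u(q_j),
\]
valid for every $m$ and every pair of functions. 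The whole argument then reduces to showing that each of these three terms converges as $m \to \infty$ to the corresponding term in the asserted formula.

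The left side is immediate: since $u, v \in \dom \E$ we have $\E_m(u,v) \to \E(u,v)$. For the interior sum I would invoke the known pointwise formula $\triangle u(x) = \frac{3}{2} \lim_{m \to \infty} 5^m \triangle_m u(x)$, which holds uniformly on $\SG$ when $u \in \dom \triangle$. Writing $(5/3)^m \triangle_m u(x) = \frac{2}{3}\, 3^{-m} \big( \frac{3}{2} 5^m \triangle_m u(x) \big)$ and using that the standard measure $dx$ assigns mass $3^{-m}$ to each of the $3^m$ cells of level $m$, one checks that $\frac{2}{3} 3^{-m} \sum_{x \in V_m \setminus V_0} v(x)(\cdots)$ is a Riemann sum whose total weight tends to $1$; the uniform convergence of $\frac{3}{2} 5^m \triangle_m u$ to the continuous function $\triangle u$, together with the continuity of $v$, then forces this sum to converge to $\int_\SG v\, \triangle u\, dx$. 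Hence the interior term tends to $-\int_\SG \triangle u\, v\, dx$.

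It remains to control the boundary term, and this is the main obstacle, since a priori the identity only tells us that the \emph{sum} of the three boundary contributions converges. The key device is to test against the harmonic functions $h_j \in \dom \E$ determined by $h_j(q_i) = \delta_{ij}$. Plugging $v = h_j$ into the discrete identity collapses the boundary sum to the single term $\partial_n^{(m)} u(q_j)$, while the other two terms already converge by the previous paragraph; therefore $\partial_n^{(m)} u(q_j)$ converges, that is, $\partial_n u(q_j)$ exists (and in fact equals $\E(u, h_j) + \int_\SG h_j\, \triangle u\, dx$). With existence of each normal derivative secured, for a general $v \in \dom \E$ the boundary term converges to $\sum_{q_j \in V_0} v(q_j) \partial_n u(q_j)$ because the three coefficients $v(q_j)$ are fixed. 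Passing to the limit in the discrete identity then yields $\E(u,v) = -\int_\SG \triangle u\, v\, dx + \sum_{V_0} v\, \partial_n u$, as claimed. The only analytic input beyond elementary summation by parts is the uniform pointwise Laplacian formula; everything else is careful bookkeeping of the renormalization factors $(5/3)^m$ and the measure weights $3^{-m}$.
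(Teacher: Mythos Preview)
The paper does not supply a proof of this Proposition: as announced in the introduction, results labeled \textbf{Proposition} are quoted as known facts from the textbooks \cite{kigami2} and \cite{strichartz}. So there is nothing to compare against; I can only assess your argument on its own merits.

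Your proof is correct and is essentially the standard textbook argument. The discrete summation-by-parts identity is right (at a boundary vertex $q_j$ the two neighbors in $\Gamma_m$ are exactly $F_j^m q_{j\pm1}$, so the boundary piece reproduces $\partial_n^{(m)}u(q_j)$), and the device of testing against the harmonic basis $h_j$ to isolate a single boundary term and deduce existence of $\partial_n u(q_j)$ is precisely how the textbooks do it. The only place where your write-up is a bit informal is the ``Riemann sum'' step for the interior term. The clean way to say it is that for interior $x\in V_m\setminus V_0$ one has $\int_\SG \psi_x^{(m)}\,dx=\tfrac{2}{3}3^{-m}$, and since $\sum_{x\in V_m}\psi_x^{(m)}\equiv 1$ one gets, for any continuous $g$,
\[
\sum_{x\in V_m\setminus V_0} g(x)\,\tfrac{2}{3}3^{-m}\;=\;\int_\SG \Big(\sum_{x\in V_m\setminus V_0} g(x)\psi_x^{(m)}\Big)\,dx\;\longrightarrow\;\int_\SG g\,dx,
\]
because the boundary splines $\psi_{q_j}^{(m)}$ have vanishing mass. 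Applying this with $g$ replaced by $v\cdot\triangle u$ and using the uniform convergence $\tfrac{3}{2}5^m\triangle_m u\to\triangle u$ (valid since $\triangle u$ is continuous by the paper's definition of $\dom\triangle$) gives the interior limit rigorously. With that tightening, your proof is complete.
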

 
For the remainder of this section, we assume $u \in C(\overline \Omega)$ solves the BVP with convergent boundary data. Naturally, we are interested in analyzing the behavior of $\partial_n u(x)$ for $x \in \partial \Omega$. For all points in $\overline \Omega$ except $q_0$, the formulas for the normal derivatives of $u$ are given by (\ref{normal eq2}). Using this equation, with the appropriate normalization factor, the normal derivative of $u$ at $y_m$ with respect to the cell $Y_m$ is
\begin{equation} \label{normal eq3}
\nearrow \partial_n u(y_m) = \(\frac{5}{3}\)^m [2 u(y_m) - u(y_{m-1}) - a_m].
\end{equation}
Similarly, the normal derivative of $u$ at $x_m$ with respect to $Y_m$ is 
\begin{equation} \label{normal eq4}
\rightarrow \partial_n u(x_m) = \(\frac{5}{3}\)^m [ 2a_m - u(y_m) - u(y_{m-1}) ].
\end{equation}
However (\ref{normal eq2}) does not give us the equation for $\uparrow \partial_n u(q_0)$ because $u$ is only defined on $\overline \Omega$. But we can define $\partial_n u(q_0)$ in a natural way. 

\begin{lemma}
If $u \in \dom\triangle (\SG)$, then 
\begin{equation} \label{normal eq5}
\uparrow \partial_n u(q_0) = 2 \cdot \lim_{m \to \infty} \nearrow \partial_n u(y_m).
\end{equation}
\end{lemma}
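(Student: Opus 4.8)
The plan is to reduce everything to a single flux identity obtained by applying the Gauss-Green formula on the top cells $T_m := F_0^m\SG$, whose boundary vertices are $q_0$, $y_m = F_0^m q_1$ and $z_m = F_0^m q_2$. Taking $v \equiv 1$ in the Gauss-Green formula on $T_m$ and using $\E(u,1)=0$ gives
\[
\partial_n^{T_m}u(q_0) + \partial_n^{T_m}u(y_m) + \partial_n^{T_m}u(z_m) = \int_{T_m}\triangle u\,dx,
\]
where $\partial_n^{T_m}$ is the normal derivative with respect to the cell $T_m$. First I would observe that, by self-similarity of the normal derivative and definition (\ref{normal eq1}), $\partial_n^{T_m}u(q_0) = \uparrow\partial_n u(q_0)$ for every $m$. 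Since $y_m$ is the junction of the two level-$m$ cells $\overline{Y_m}$ and $T_m$, while $z_m$ is the junction of $\overline{Z_m}$ and $T_m$, the matching condition for $u\in\dom\triangle(\SG)$ gives $\partial_n^{T_m}u(y_m) = -\nearrow\partial_n u(y_m)$ and $\partial_n^{T_m}u(z_m) = -\partial_n^{\overline{Z_m}}u(z_m)$, yielding the identity
\[
\uparrow\partial_n u(q_0) = \nearrow\partial_n u(y_m) + \partial_n^{\overline{Z_m}}u(z_m) + \int_{T_m}\triangle u\,dx .
\]

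From here the proof has two ingredients. Because $\triangle u$ is continuous and $\mu(T_m)=3^{-m}$, the integral tends to $0$, so it suffices to show the two boundary contributions are asymptotically equal, $\nearrow\partial_n u(y_m) - \partial_n^{\overline{Z_m}}u(z_m)\to 0$; with the identity this forces each to converge to $\tfrac12\uparrow\partial_n u(q_0)$, which is the claim. To compare them I would split $u|_{T_m}=H_m+S_m$, with $H_m$ harmonic on $T_m$ matching $u(q_0),u(y_m),u(z_m)$ on $\partial T_m$ and $S_m$ vanishing on $\partial T_m$ with $\triangle S_m=\triangle u$. Pulling $S_m$ back by $F_0^{-m}$, the scaling $\triangle(v\circ F_0^m)=5^{-m}(\triangle v)\circ F_0^m$ and a Green's function bound give $|\partial_n^{T_m}S_m(y_m)|,|\partial_n^{T_m}S_m(z_m)|=O(3^{-m})$, while (\ref{normal eq2}) scaled by $(5/3)^m$ gives $\partial_n^{T_m}H_m(y_m)-\partial_n^{T_m}H_m(z_m) = 3(5/3)^m[u(y_m)-u(z_m)]$. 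Thus the whole problem reduces to the single estimate $(5/3)^m[u(y_m)-u(z_m)]\to 0$.

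The main obstacle is exactly this estimate, and the key point is that the difference $u(y_m)-u(z_m)$ selects the skew-symmetric mode about the axis $X$, which decays one factor faster than the symmetric mode. Writing $u=H+R$ on all of $\SG$, with $H$ the harmonic interpolant of $u|_{V_0}$ and $R\in\dom\triangle(\SG)$ vanishing on $V_0$: for the harmonic part one computes directly that $H(F_0^m q_1)-H(F_0^m q_2)=(u(q_1)-u(q_2))\,5^{-m}$, since the skew harmonic function is the $1/5$-eigenvector of the decimation toward $q_0$, so $(5/3)^m[H(y_m)-H(z_m)]=(u(q_1)-u(q_2))\,3^{-m}\to 0$. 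For the remainder I would use $R(x)=-\int_\SG G(x,y)\triangle u(y)\,dy$, giving $R(y_m)-R(z_m)=-\int_\SG\Gamma_m(y)\triangle u(y)\,dy$ with $\Gamma_m(y):=G(y_m,y)-G(z_m,y)$. As $V_0$ and the Laplacian are invariant under the reflection $\rho$ across $X$, the kernel $\Gamma_m$ is skew in $y$, so only the skew part $g:=\tfrac12(\triangle u-\triangle u\circ\rho)$ contributes, and $g$ is continuous with $g(q_0)=0$, $g|_X=0$. Splitting $\SG=T_K\cup(\SG\setminus T_K)$: on $\SG\setminus T_K$ the function $G(\cdot,y)$ is harmonic on $T_K$ with $G(q_0,y)=0$, so $\Gamma_m(y)=[G(y_K,y)-G(z_K,y)]\,5^{-(m-K)}$ and $(5/3)^m|\Gamma_m(y)|=O(3^{-m}5^K)\to 0$; on $T_K$ one uses $|g|\le\epsilon(K)$ together with $\int_{T_K}(G(y_m,\cdot)+G(z_m,\cdot))\le 2w(y_m)$, where $w$ solves $-\triangle w=1$, $w|_{V_0}=0$ and $(5/3)^m w(y_m)$ is bounded. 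Letting $m\to\infty$ and then $K\to\infty$ (so $\epsilon(K)\to 0$) gives $(5/3)^m[R(y_m)-R(z_m)]\to 0$, finishing the estimate and the lemma. This Green's-function argument, exploiting the extra $5^{-m}$ decay of the skew mode, is the technical heart and the only place the full strength of $u\in\dom\triangle(\SG)$ is used; I expect the required kernel facts ($\int_\SG|\partial_n G|<\infty$ and boundedness of $(5/3)^m w(y_m)$) to be routine lemmas to cite from the appendix.
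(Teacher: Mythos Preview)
Your argument is correct and rests on the same two ingredients as the paper's proof: the Gauss--Green flux identity on $T_m=F_0^m\SG$ with $v\equiv 1$, and the vanishing of the skew-symmetric contribution. The organization differs. The paper decomposes $u=u_s+u_a$ into symmetric and skew-symmetric parts at the outset; for $u_s$ the identity plus $\nearrow\partial_n u_s(y_m)=\nwarrow\partial_n u_s(z_m)$ gives the result immediately, while for $u_a$ the paper simply asserts $u_a|_{T_m}=O(5^{-m})$ (strictly speaking $O(m\,5^{-m})$, which is still ample) and concludes both sides are zero. You instead keep $u$ intact, derive the identity, and reduce to showing $(5/3)^m[u(y_m)-u(z_m)]\to 0$; your two-scale $T_K$ splitting with the Green's kernel and the skew eigenvalue $1/5$ is exactly a rigorous proof of the decay the paper takes for granted. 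So your route is a bit longer but fills in the one step the paper leaves as an exercise; the paper's route is shorter because the symmetric/skew decomposition makes the symmetric half a one-liner. Either way the crux is the same estimate, and your Green's-function bound on the skew part is precisely what is needed.
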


\begin{proof}
Write $u=u_s+u_a$, where $u_s$ and $u_a$ are the parts of $u$ that are symmetric and skew-symmetric with respect to $X$, respectively. Since $\left. u_a \right|_{F_0^m(SG)} = O(1/5^m)$, we have 
\[
\uparrow \partial_n u_a(q_0) 
= 2 \cdot \lim_{m \to \infty} \nearrow \partial_n u_a(y_m)
= 0.
\] 
For the symmetric part, consider the triangle $T_m$ with boundary points $\{q_0, y_m, z_m\}$ and the harmonic function $v$ on $T_m$ with $v(q_0)=v(y_m)=v(z_m)=1$. Applying the symmetric Gauss-Green formula (\ref{gauss-green}) for $u_s$ and $v$, we find that 
\[
\downarrow \partial_n u_s(q_0) \ + \nearrow \partial_n u_s(y_m) \ + \nwarrow \partial_n u_s(z_m) = \int_{T_m} \triangle u_s \ dx . 
\]
Notice that $\nearrow \partial_n u_s(y_m) = \ \nwarrow \partial_n u_s(z_m)$ by symmetry. Using the normal derivative matching condition of $u$ at $q_0$, we see that $\uparrow \partial_n u_s(y_m) = -\downarrow \partial_n u_s(q_0)$. Making these substitutions and taking the limit $m \to \infty$, we find that
\[
2 \cdot \lim_{m \to \infty} \nearrow \partial_n u_s(y_m) \ - \uparrow \partial_n u_s(q_0)
= \lim_{m \to \infty}\int_{T_m} \triangle u_s \ dx 
= 0,
\]
because $\triangle u$ is bounded and the measure of $T_m$ tends to zero in the limit. 
\end{proof}

Motivated by this lemma, we define $\uparrow \partial_n u(q_0)$ for $u$ defined on $\Omega$ by (\ref{normal eq5}). In the special case that $u\in C(\overline \Omega)$ solves the BVP with convergent data, then 
\begin{equation} \label{normal eq6}
\uparrow \partial_n u(q_0) 
= \lim_{m \to \infty} \left[ 5^m \(\frac{30}{7}\) \sum_{k=m+1}^\infty \frac{1}{3^k} a_k - \frac{1}{3^m} \(\frac{12}{7}\) \sum_{k=1}^{m} 5^k a_k \right],
\end{equation}
which we obtained by substituting (\ref{bvp um}) into the definition of $\uparrow \partial_n u(q_0)$.

Notice that (\ref{normal eq2}) implies that the normal derivatives of harmonic functions $\SG$ exist everywhere. However, this is not true for harmonic functions on $\Omega$ because the limit in (\ref{normal eq6}) may not exist. The following theorem characterizes when the limit exists. 

\begin{theorem} \label{new}
The normal derivative $\uparrow\partial_n u(q_0)$ exists if and only if the boundary data has the representation $a_m=A_1+A_2(3/5)^m+o((3/5)^m)$ for some constants $A_1$ and $A_2$. 
\end{theorem}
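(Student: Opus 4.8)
The plan is to analyze the limit in (\ref{normal eq6}) directly, viewing it as a combination of a forward tail sum and a backward partial sum of the data $\{a_m\}$. Let $A_1=\lim_m a_m$ (which exists by the standing hypothesis) and set $b_m=a_m-A_1$, so $b_m\to0$. The first step is a reduction: substituting $a_k=A_1+b_k$ into (\ref{normal eq6}) and summing the two geometric contributions of $A_1$ separately, the leading $(5/3)^m$ pieces cancel identically, leaving only a remainder $\tfrac{15A_1}{7}\,3^{-m}\to0$. Hence $\uparrow\partial_n u(q_0)$ exists iff $\lim_m T_m$ exists, where $T_m=\tfrac{30}{7}P_m-\tfrac{12}{7}Q_m$ with $P_m=5^m\sum_{k>m}3^{-k}b_k$ and $Q_m=3^{-m}\sum_{k\le m}5^k b_k$. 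Moreover, writing $c_m:=(5/3)^m b_m$, the target expansion $a_m=A_1+A_2(3/5)^m+o((3/5)^m)$ says precisely that $c_m\to A_2$, so the whole theorem reduces to the equivalence: $T_m$ converges iff $c_m$ converges.

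In the variable $c_m$ one has the clean expressions $P_m=\sum_{j\ge1}5^{-j}c_{m+j}$ and $Q_m=\sum_{i\ge0}3^{-i}c_{m-i}$, together with the one-step recurrences $P_{m+1}=5P_m-c_{m+1}$ and $Q_{m+1}=\tfrac13 Q_m+c_{m+1}$. For the easy (sufficiency) direction, assume $c_m\to A_2$. Since $(c_m)$ is then bounded and the kernels $5^{-j}$, $3^{-i}$ are summable, a standard split-the-sum estimate gives $P_m\to A_2/4$ and $Q_m\to \tfrac32 A_2$, whence $T_m\to\tfrac{30}{7}\cdot\tfrac{A_2}{4}-\tfrac{12}{7}\cdot\tfrac{3A_2}{2}=-\tfrac32 A_2$, so the normal derivative exists.

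The converse is the main obstacle. The danger is that $P_m$ is driven by the unstable multiplier $5$ while $Q_m$ is driven by the stable multiplier $1/3$, so a priori $T_m$ could converge through cancellation even if $c_m$ oscillated. To preclude this I would eliminate $P_m$ and $Q_m$ and expose a recurrence for $c_m$ alone: forming $5T_m-T_{m+1}=-8Q_m+6c_{m+1}$ and $\tfrac13T_m-T_{m+1}=-20P_m+6c_{m+1}$ solves $Q_m$ and $P_m$ exactly in terms of $c_{m+1},T_m,T_{m+1}$, and substituting into $Q_{m+1}=\tfrac13Q_m+c_{m+1}$ yields, after simplification, the exact first-order recurrence $c_{m+1}=\tfrac53 c_m+\tfrac{1}{18}\big(-3T_{m+1}+16T_m-5T_{m-1}\big)$. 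Assuming $T_m\to T$, the forcing tends to $\tfrac49 T$, so with $d_m:=c_m+\tfrac23 T$ the constant is absorbed and $d_{m+1}=\tfrac53 d_m+\eta_m$ with $\eta_m\to0$.

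The final step is to solve this unstable recurrence in the correct direction. Iterating backward gives, for every $N$, $d_m=(3/5)^N d_{m+N}-\sum_{j=0}^{N-1}(3/5)^{j+1}\eta_{m+j}$. Here the a priori input $b_m\to0$ is decisive: it is equivalent to $(3/5)^m d_m=b_m+\tfrac23 T(3/5)^m\to0$, so the boundary term $(3/5)^N d_{m+N}=(5/3)^m\big((3/5)^{m+N}d_{m+N}\big)\to0$ as $N\to\infty$. Letting $N\to\infty$ produces the absolutely convergent representation $d_m=-\sum_{j\ge0}(3/5)^{j+1}\eta_{m+j}$, from which $|d_m|\le\tfrac32\sup_{k\ge m}|\eta_k|\to0$; thus $c_m\to-\tfrac23 T=A_2$, which is exactly the claimed asymptotic for $a_m$. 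I expect the only delicate points to be the bookkeeping that produces the clean $c$-recurrence and the observation that $b_m\to0$ is precisely what kills the boundary term in the backward iteration; the elementary facts about the two summable kernels can be cited from the sequence-and-series lemmas collected in Section \ref{appendix}.
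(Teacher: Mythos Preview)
Your argument is correct, and it takes a genuinely different route from the paper's own proof. The paper works with the bracketed quantity in (\ref{normal eq6}) itself, calling it $b_m$, sets $c_m=(3/5)^{m+1}b_m$, and produces a second--order identity $35c_{m+2}-112c_{m+1}+21c_m=-126(a_{m+2}-a_{m+1})$; from this it first re-derives that $\{a_m\}$ is Cauchy, then telescopes $A_1-a_m$ in terms of the $c_k$'s and inserts the constant $A_2$ by hand to squeeze out the $o((3/5)^m)$ bound. You instead subtract off $A_1$ at the outset (legitimately, by the standing hypothesis of the section), pass to the single scalar $c_m=(5/3)^m(a_m-A_1)$ so that the goal becomes ``$c_m$ converges,'' and then extract a clean \emph{first--order} recurrence $c_{m+1}=\tfrac{5}{3}c_m+\tfrac{1}{18}(-3T_{m+1}+16T_m-5T_{m-1})$ by eliminating $P_m,Q_m$. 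Solving this unstable recurrence backward and using $b_m\to0$ to kill the boundary term is exactly the right move, and it makes transparent why the standing hypothesis $a_m\to A_1$ is precisely what prevents the $5/3$--mode from blowing up. Your approach also delivers the explicit value of the limit, $\uparrow\partial_nu(q_0)=-\tfrac32 A_2$, where the paper only asserts ``a constant times $A_2$.'' The paper's route is a bit more self-contained (it does not invoke the standing convergence of $\{a_m\}$), while yours is dynamically cleaner and explains the mechanism; both are valid.
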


\begin{proof}
Suppose the limit (\ref{normal eq6}) exists. Let $b_m$ be the term in parentheses, and define $B=\lim_{m\to\infty} b_m$ and $c_m=(3/5)^{m+1} b_m$. A direct calculation shows that 
\[
35c_{m+2}-112c_{m+1}+21c_m=C(a_{m+2}-a_{m+1}), 
\]
where $C=-126$. This implies $a_m$ is dominated by a geometric series, hence it is a Cauchy sequence and converges to some limit $A_1$. Writing $a_m$ as a telescoping series, we have
\begin{align*}
A_1-a_m
=\sum_{k=m}^\infty (a_{k+1}-a_k) 
&=\frac{1}{C}\sum_{k=m}^\infty (35c_{k+2}-112c_{k+1}+21c_k) \\
&=\frac{1}{C}\sum_{k=m}^\infty \(\frac{3}{5}\)^{k+2}(21b_{k+2}-112b_{k+1}+35b_k).
\end{align*}
Let $A_2=(252/5)(B/C)$. Adding $A_2(3/5)^m=56(B/C)\sum_{k=m}^\infty (3/5)^{k+2}$ to both sides of the above equation, we find that
\[
A_1-a_m+A_2\(\frac{3}{5}\)^m=\frac{1}{C}\sum_{k=m}^\infty \(\frac{3}{5}\)^{k+2}[21(b_{k+2}-B)-112(b_{k+1}-B)+35(b_k-B)].
\]
Finally, taking the absolute value of both sides, we obtain 
\[
\left|a_m-A_1-A_2\(\frac{3}{5}\)^m\right|
\leq C'\sum_{k=m}^\infty \(\frac{3}{5}\)^{k+2}\(|b_{k+2}-B|+|b_{k+1}-B|+|b_k-B|\)
\]
Since $|b_k-B|\to 0$ as $k\to\infty$, we conclude that $a_m-A_1-A_2(3/5)^m=o((3/5)^m)$.

Conversely, if $a_m=A_1+A_2(3/5)^m+o((3/5)^m)$, then clearly the limit (\ref{normal eq6}) exists and equals a constant times $A_2$.
\end{proof}

To find the normal derivatives on $X$ in terms of the boundary data, we substitute (\ref{bvp um}) into (\ref{normal eq4}), which yields 
\begin{equation} \label{normal eq7}
\eta_m 
= \(\frac{5}{3}\)^m \( 3a_m - \frac{12}{7} \sum_{k=1}^\infty \frac{1}{3^k} a_{m+k} \) - \frac{1}{3^m} \( 6 a_0 + \frac{12}{7} \sum_{k=1}^m 5^k a_k - \frac{54}{7} \sum_{k=1}^\infty \frac{1}{3^k} a_k \),
\end{equation}
where $\eta_m = \ \rightarrow \partial_n u(x_m)$. We can think of (\ref{normal eq7}) as a Dirichlet to Neumann map on $X$ because it maps the Dirichlet boundary data to the corresponding normal derivatives. Define the infinite vectors
\[
\boldsymbol{\eta} = 
\begin{bmatrix} 
\eta_1 \\
\vdots \\
\eta_i \\
\vdots \\
\end{bmatrix}, 
\indent 
\a = 
\begin{bmatrix}
a_1 \\
\vdots \\
a_i \\
\vdots
\end{bmatrix}
\indent \text{and} \indent
\a_0 = 6a_0
\begin{bmatrix}
1/3 \\
\vdots \\
1/3^i \\
\vdots
\end{bmatrix},
\]
and the infinite matrices $\L = \text{Diag}[(5/3)^i]$ and $\K$ with entries 
\[
K_{i,j} = 
\begin{cases}
\vspace{0.5em}
\ \frac{7}{16} - \frac{27}{8} \frac{1}{5^i} \frac{1}{3^j} &\text{if } i = j, \\ 
\vspace{0.5em}
\ \frac{3}{4} \frac{3^i}{3^j} - \frac{27}{8} \frac{1}{5^i} \frac{1}{3^j} &\text{if } i < j, \\
\vspace{0.5em} 
\ \hspace{0.05em} \frac{3}{4} \frac{5^j}{5^i} -  \frac{27}{8} \frac{1}{5^i} \frac{1}{3^j} &\text{if } i > j.
\end{cases}
\]
Then (\ref{normal eq7}) can be written as 
\[
\boldsymbol \eta = \frac{16}{7} \L(\I-\K)\a + \a_0.
\]
Since we assumed $\{a_m\}$ converges and $u \in C(\overline \Omega)$, we see that $\{a_m\}, \{u(y_m)\} \in \ell^\infty$. Then (\ref{normal eq4}) implies $\|(3/5)^m \eta_m\|_{\ell^\infty} < \infty$. For this reason, for a real number $r$, we define the space 
\[
\ell^{r,\infty} = \{\{c_m\} \colon \|r^m c_m\|_{\ell^\infty} < \infty \}.
\] 
Then we define the Dirichlet to Neumann map $D_N \colon \ell^\infty \to \ell^{3/5, \infty}$ given by
\[
D_N \a = \frac{16}{7} \L(\I-\K)\a + \a_0.
\]

\begin{theorem} \label{tn1}
The Dirichlet to Neumann map is invertible.
\end{theorem}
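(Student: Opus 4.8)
The plan is to strip away the affine and diagonal structure and reduce the claim to the invertibility of a single infinite matrix. Since $D_N\a=\frac{16}{7}\L(\I-\K)\a+\a_0$ and $\a_0$ is a fixed vector once the datum $a_0=u(q_1)$ is fixed, $D_N$ is an affine map whose linear part is $\frac{16}{7}\L(\I-\K)$, and an affine map is bijective exactly when its linear part is; so the nonzero scalar $16/7$ and the translation $\a_0$ may be set aside. Moreover, by the very definition of the weighted space, $\L=\mathrm{Diag}[(5/3)^i]$ is an isometric isomorphism of $\ell^\infty$ onto $\ell^{3/5,\infty}$ with inverse $\mathrm{Diag}[(3/5)^i]$. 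Hence everything reduces to understanding $\I-\K$ on $\ell^\infty$, which is the statement I would analyze.

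The first thing to try is the Neumann series $(\I-\K)^{-1}=\sum_{n\ge 0}\K^n$, which converges when $\|\K\|<1$, the norm on $\ell^\infty$ being the largest absolute row sum. A direct check shows every entry $K_{i,j}$ is strictly positive, since for all $i,j\ge 1$ the geometric main term dominates the rank-one correction $\frac{27}{8}5^{-i}3^{-j}$; summing the geometric series then gives the clean identity $\sum_j K_{i,j}=1-\frac{21}{8}5^{-i}$, so that $\|\K\|=\sup_i(1-\frac{21}{8}5^{-i})=1$, with the supremum approached but never attained. This marginal value is the crux of the theorem: it signals that $\K$ almost fixes the constant sequence at large index, and in fact $\I-\K$ exactly annihilates the bounded sequence $1-\frac{4}{3}(3/5)^m$, which is the restriction to $X$ of the harmonic function on $\SG$ with boundary values $(1,0,0)$. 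Thus a naive norm bound cannot work, and the theorem is naturally read as asserting invertibility of the full Dirichlet-to-Neumann correspondence: the one marginal mode is pinned down only by the remaining data, namely the value $a_0$ at $q_1$ carried by $\a_0$ (equivalently, by the value of $u$ at $q_0$ forced through continuity, detected by the matching normal derivative $\partial_n u(q_1)$, which is nonzero in this kernel direction).

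Rather than estimate norms, I would invert constructively by reading $\I-\K$ as the difference operator it encodes. Writing $w_m=u(y_m)$ and $\tilde\eta_m=(3/5)^m\eta_m$, equations (\ref{normal eq4}) and (\ref{bvp recurrence}) couple the unknowns through $2a_m-w_m-w_{m-1}=\tilde\eta_m$ and $5w_m-16w_{m-1}+3w_{m-2}=-5a_m-3a_{m-1}$, with $w_0=a_0$. Eliminating $w$ yields a second-order linear recurrence for $\{a_m\}$ whose characteristic polynomial has roots $1$ and $3/5$, so the homogeneous solutions are spanned by the constant mode and $(3/5)^m$, both bounded; the low-order boundary relations together with $w_0=a_0$ cut this down to the single kernel direction above. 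Given $\boldsymbol\eta\in\ell^{3/5,\infty}$, I would write the general bounded solution as an explicit summation against the geometric Green's function of this recurrence (mirroring the structure of (\ref{bvp um})) plus the one free mode, and then fix that mode using $a_0$ and the continuity requirement at $q_0$. The uniform $\ell^\infty$ bound on the reconstructed $\a$, hence boundedness of the inverse, would follow from the same splitting-of-geometric-sums estimates already used in the proof of Theorem \ref{bvp T2}.

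The step I expect to be the main obstacle is precisely this marginal mode. Suppressing a growing mode would be automatic and costless, but here the obstruction lives in a bounded direction, so injectivity is not free: it must be recovered by genuinely using the boundary datum at $q_1$ together with the forced continuity of $u$ at $q_0$. Once this bookkeeping is carried out, verifying that the resulting two-sided inverse is bounded from $\ell^{3/5,\infty}$ to $\ell^\infty$ is routine.
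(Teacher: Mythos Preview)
Your analysis is sharper than the paper's and in fact exposes an error there. The paper attempts exactly the Neumann-series argument you reject: it bounds each row sum of $\K$ strictly below $1$ (by discarding the negative rank-one correction) and then asserts $\|\K\|_\infty<1$, invoking Perron--Frobenius to conclude $\rho(\K)<1$. But your exact identity $\sum_j K_{i,j}=1-\tfrac{21}{8}\,5^{-i}$ shows the supremum over $i$ is $1$; the paper has taken a supremum of strict inequalities and illegitimately kept the strictness. Worse, as you point out, the bounded sequence $a_m=1-\tfrac{4}{3}(3/5)^m$ lies in the kernel of $\I-\K$: it is the trace on $X$ of the global harmonic function with boundary values $(1,0,0)$, which is symmetric across $X$ and hence has $\eta_m=0$ for all $m$, while $a_0=0$ forces $\a_0=0$. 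One can check this directly from (\ref{normal eq7}). So $1\in\sigma(\K)$, $\I-\K$ is not invertible on $\ell^\infty$, and the map $D_N$ as literally defined (with $a_0$ held fixed and only $\{\eta_m\}_{m\ge 1}$ recorded) is not a bijection. The paper's proof is incorrect, and the theorem as stated needs repair.

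Your reinterpretation is the right fix: the full Dirichlet-to-Neumann correspondence becomes invertible only once the datum at $q_1$ is matched on the Neumann side (via $\partial_n u(q_1)$, which is nonzero in the kernel direction) to pin down the one-dimensional bounded mode. Your plan---invert via the second-order recurrence with characteristic roots $1$ and $3/5$, write the particular solution as a two-sided geometric sum, then fix the free mode through $a_0$ and continuity at $q_0$---is essentially forced and mirrors the mechanism already used in Theorem \ref{bvp T2}. What remains is exactly what you flag as the obstacle: the bookkeeping that ties the marginal mode to the extra boundary datum, and the routine $\ell^\infty$ estimate on the reconstruction. None of that is hard once the statement is corrected; the substantive insight is the one you have already supplied.
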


\begin{proof}
We see that $D_N$ is a composition of $\L \colon \ell^\infty \to \ell^{3/5, \infty}$ with $\I-\K \colon \ell^\infty \to \ell^\infty$ plus a translation. The translation is not important and obviously $\L$ is invertible because it is diagonal. 

\noindent
It is well known that $\I-\K$ is invertible if and only if $\rho (\K) < 1$, where $\rho(\K)$ is the spectral radius of $\K$. The sum of the entries of the $i$-th row is
\[
\sum_{j=1}^\infty K_{i,j} 
= K_{i, i} + \sum_{j=1}^{i-1} K_{i,j} + \sum_{j=i+1}^{\infty} K_{i,j} 
< \frac{7}{16} + \frac{3}{4} \( \sum_{j=1}^{i-1} \frac{5^j}{5^i} + \sum_{j=i+1}^{\infty} \frac{3^i}{3^j}\). 
\]
Consequently, 
\[
\|\K\|_\infty 
= \sup_i \sum_{j=1}^\infty K_{i,j} 
< \frac{7}{16} + \frac{3}{4} \( \sum_{j=1}^{\infty} \frac{1}{5^j} + \sum_{j=1}^{\infty} \frac{1}{3^j} \)
= 1.
\]
Since $\K$ is a positive matrix, the Perron-Frobenius Theorem for positive matrices states that $\rho(\K) \leq \|\K\|_\infty$. Thus, $\rho(\K) < 1$, which shows that $\I - \K$ is invertible.
\end{proof}

\section{Eigenfunctions} \label{eigenfunctions}

The exact spectral asymptotics on the whole gasket and the structure of the spectrum has been analyzed previously \cite{strichartz3}. Motivated by that result, we discuss eigenvalues and eigenfunctions on the half gasket. Observe that:
\begin{enumerate}
\item
A Dirichlet eigenfunction on $\Omega$ extends by odd reflection to a Dirichlet eigenfunction on $\SG$ and conversely. 
\item
A Neumann eigenfunction on $\Omega$ extends by even reflection to a Neumann eigenfunction on $\SG$ and conversely.
\end{enumerate}
Thus there are no new eigenvalues on $\Omega$ because odd eigenfunctions on $\SG$ are Dirichlet eigenfunctions on $\Omega$ and even eigenfunctions on $\SG$ are Neumann eigenfunctions on $\Omega$. Hence we count the number of even and odd eigenfunctions on $\SG$. 

On $\SG$, there are $\# V_m = (3^{m+1} + 3)/2$ vertices on level $m$, of which $m+1$ lie on $q_0 \cup X$ and three are boundary points $V_0$. The eigenfunctions with eigenvalue $\lambda \leq C_0 5^m$ for a specific choice of $C_0$ are born on level $k \leq m$ and are in one-to-one correspondence with the graph eigenfunctions on $V_m$, so there are $(3^{m+1} +3)/2$ Neumann eigenfunctions and $(3^{m+1}- 3)/2$ Dirichlet eigenfunctions. Thus on $\Omega$, 
\begin{align*}
\# \{\text{Neumann eigenfunctions with } \lambda \leq C_0 5^m\} 
&= \frac{1}{2} \(\frac{3^{m+1} + 3}{2} + m + 1\), \\
\# \{\text{Dirichlet eigenfunctions with } \lambda \leq C_0 5^m\} 
&= \frac{1}{2} \(\frac{3^{m+1} - 3}{2} - m\),
\end{align*}
because the $m+1$ vertices on $q_0 \cup X$ contribute even functions to the Neumann count while the $m$ vertices on $X$ do not contribute to the Dirichlet count. Note that the correction terms $m+1$ and $-m$ are of the order $\log 5^m$. This is consistent with the observation that $\partial \Omega$ is zero dimensional. We can be more specific about individual multiplicities of eigenvalues on $\Omega$. For a set $U$, define the functions 
\begin{align*}
N(U) &= \# \{\text{Neumann eigenfunctions on } U\}, \\
D(U) &= \# \{\text{Dirichlet eigenfunctions on } U\}.
\end{align*}
Each eigenfunction born on level $k$ restricts to a graph eigenfunction on $V_k$ with eigenvalue $\mu$ with $\mu=0,2,3,5,$ or $6$. We say that the eigenfunction belongs to the $\mu$-series. This is explained in detail in \cite{strichartz} and \cite{strichartz3}, together with bifurcation rules that explain how the restriction of the eigenfunction to $V_k$ leads to several different eigenfunctions on $\SG$. The multiplicity of the eigenspaces only depends on $k$ and $\mu$ and is explicitly computed on $\Omega$ as follows. 
\begin{enumerate}
\item 
0-series (constant eigenfunctions) have multiplicity $N(\Omega) = 1$ and $D(\Omega) = 0$. 
\item
2-series only show up in the Dirichlet spectrum on $\SG$, but they are all even so they are absent from the Dirichlet spectrum of $\Omega$. Thus, $N(\Omega) = 0$ and $D(\Omega) = 0$.
\item
3-series are entirely Neumann eigenfunctions on $\SG$ that are born on level 0 with multiplicity 2. Then $N(\Omega) = 1$ and $D(\Omega) = 0$. 
\item
5-series are born on level $k$ where $k \geq 1$ for Dirichlet eigenfunctions and $k \geq 2$ for Neumann eigenfunctions. If $S_k$ denotes the number of cycles of level less than $k$, then on $\SG$, we find that $N(\SG) = S_k$ and $D(\SG) = S_k + 2$. For a cycle that lies on $X$, the eigenfunction is odd, so that contributes to $D(\SG)$ but not to $N(\SG)$. See Figure \ref{eigenfunction F1} for an example of such a function. Note that any unlabeled point means the function is defined to be zero at that point. Additionally, of the two extra Dirichlet eigenfunctions on $\SG$, exactly one is odd, as shown in Figure \ref{eigenfunction F2}. 

\begin{figure}[h]
\begin{center}
\begin{tikzpicture} [scale = 0.75]
\draw 
(0, 0) -- 
(6, 0) -- 
(3, 5.2) -- 
(0, 0);
\draw 
(3, 0) -- 
(1.5, 2.6) -- 
(4.5, 2.6) -- 
(3, 0);
\draw
(1.5, 0) node[anchor=north]{$-1$} -- 
(0.75, 1.3) node[anchor=east]{$1$} -- 
(2.25, 1.3) -- 
(1.5, 0);
\draw
(4.5, 0) node[anchor=north]{$1$} -- 
(5.25, 1.3) node[anchor=west]{$-1$} -- 
(3.75, 1.3) -- 
(4.5, 0);
\draw
(3, 2.6) -- 
(2.25, 3.9) node[anchor=east]{$-1$} -- 
(3.75, 3.9) node[anchor=west]{$1$} -- 
(3, 2.6);
\end{tikzpicture}
\end{center}
\caption{Odd eigenfunction on $\Gamma_2$}
\label{eigenfunction F1}
\end{figure}
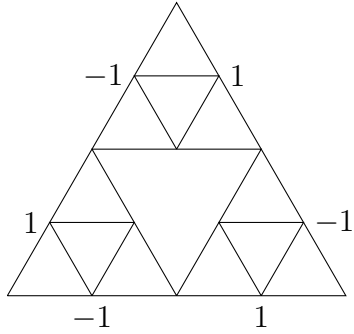

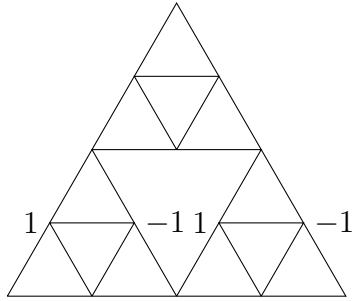
\begin{figure} [h]
\begin{center}
\begin{tikzpicture} [scale = 0.75]
\draw 
(0, 0) -- 
(6, 0) -- 
(3, 5.2) -- 
(0, 0);
\draw 
(3, 0) -- 
(1.5, 2.6) -- 
(4.5, 2.6) -- 
(3, 0);
\draw
(1.5, 0) -- 
(0.75, 1.3) node[anchor=east]{$1$} -- 
(2.25, 1.3) node[anchor=west]{ $-1$} -- 
(1.5, 0);
\draw
(4.5, 0) -- 
(5.25, 1.3) node[anchor=west]{$-1$} -- 
(3.75, 1.3) node[anchor=east]{$1$} -- 
(4.5, 0);
\draw
(3, 2.6) -- 
(2.25, 3.9) -- 
(3.75, 3.9) -- 
(3, 2.6);
\end{tikzpicture}
\end{center}
\caption{Another odd eigenfunction on $\Gamma_2$}
\label{eigenfunction F2}
\end{figure}

\medskip \noindent
The number of cycles of level $n$ is $3^{n-1}$ and exactly one of these lies on $X$. 
So there are $(3^{n-1} + 1)/2$ odd eigenfunctions and $(3^{n-1} - 1)/2$ even eigenfunctions. Thus
\[
N(\Omega)=\sum_{n=1}^{k-1}\frac{1}{2}(3^{n-1}-1)=\frac{1}{2}\(\frac{3^{k-1}+1}{2}-k\)
\]
and
\[
D(\Omega)=\(\sum_{n=1}^{k-1}\frac{1}{2}(3^{n-1}+1)\)+1=\frac{1}{2}\(\frac{3^{k-1}+1}{2}+k\).
\]
\item
6-series on $\SG$ are born on level $k$ where $k \geq 1$ for Neumann eigenfunctions and $k \geq 2$ for Dirichlet eigenfunctions. We know that $N(\SG) = \#V_{k-1}$ and $D(\SG) = \#V_{k-1} - 3$. Neumann eigenfunctions are obtained by giving arbitrary values on the points in $V_{k-1}$, while Dirichlet eigenfunctions are obtained by giving arbitrary values on the points $V_{k-1} \setminus V_0$. 

\medskip\noindent
To find the multiplicities on $\Omega$, we just have to count the even eigenfunctions and the odd eigenfunctions. Hence  
\[
N(\Omega) = \frac{1}{2} \(\frac{3^k + 3}{2} + k\) 
\indent \text{and} \indent 
D(\Omega) = \frac{1}{2} \(\frac{3^k - 3}{2} - k+1\).
\]
\end{enumerate}

\section{Trace Theorem} \label{trace}

Consider the restriction map $R$ given by $Ru = \{(u(x_m),\partial_nu(x_m))\}$, where $u$ is some function defined on some set containing $X$. That is, $R$ maps $u$ to its function values on $X$ and its normal derivatives on $X$. In this section, we determine the image of $\DT$ and $\DI$ under $R$. We say that $u \in \DT$ if $u$ is continuous on $\SG$ and $\triangle u \in L^2(\SG)$, and analogously for $u \in \DI$.

To simplify notation, we define the following spaces. Define the Lipschitz space 
\[
\Lip = \{\{c_m\} \colon \text{there exists } M \text{ such that } |c_{m+1}-c_m| \leq M \text{ for all } m \}.
\]
The norm on Lip/Constants is $\|c_m\|_{\Lip} = \inf M$ where the infimum is taken over all $M$ satisfying the previous condition. It follows directly from the definition of $\Lip$ that $\{c_m\} \in \Lip$ if and only if there exists $M$ such that $|c_m - c_n| \leq M |m-n|$ for all $m$ and $n$. 

We define the following trace spaces:
\begin{align*}
\T_\infty 
&= \big\{ \{(a_m,\eta_m)\} \colon 
a_m = A_1 + A_2(3/5)^m + a_m',  \\
&\hspace{10em}\|5^m a_m'\|_{\ell^\infty}<\infty, \|3^m\eta_m\|_\Lip <\infty \big\}, \\
\T_2
&= \big\{ \{(a_m,\eta_m)\} \colon 
a_m = A_1 + A_2(3/5)^m + a_m', \\
&\hspace{10em}\|(25/3)^{m/2}a_m'\|_{\ell^2}, \ \|3^{m/2}\eta_m\|_{\ell^2} < \infty \big\},
\end{align*}
with their respective norms
\begin{align*}
&\|\{(a_m,\eta_m)\}\|_{\T_\infty} 
= |A_1| + |A_2| + \|5^ma_m'\|_{\ell^\infty} + \|3^m \eta_m\|_\Lip, \\
&\|\{(a_m,\eta_m)\}\|_{\T_2}^2 
\ = |A_1|^2 + |A_2|^2 + \|(25/3)^{m/2} a_m'\|_{\ell^2}^2 + \|3^{m/2} \eta_m\|_{\ell^2}^2.
\end{align*}
Clearly both trace norms satisfy the triangle inequality. Note that the defined norm $\|\cdot\|_{\T_2}$ makes $\T_2$ a Hilbert Space with the obvious inner product. Similarly, we define norms on $\DI$ and $\DT$ by 
\begin{align*}
&\|u\|_{\DI} 
= \|u\|_{L^\infty(\SG)} + \|\triangle u\|_{L^\infty(\SG)}, \\
&\|u\|_{\DT}^2 \ 
= \|u\|_{L^2(\SG)}^2 + \|\triangle u\|_{L^2(\SG)}^2.
\end{align*}
In the above definition, we could have replaced $\|\cdot\|_{L^2}^2$ term with $\|\cdot\|_{L^\infty}^2$, but that would not be a Hilbert Space norm.  

As suggested by the notation, our goal is to prove that $R$ maps $\DI$ and $\DT$ to their corresponding trace spaces. In Section \ref{extension}, we will show that the mapping is onto. 

\begin{theorem} [Trace Theorem] \label{trace T1}
\indent
\begin{enumerate}
\item
The restriction operator $R \colon \DI \to \T_\infty$ is bounded and 
\[
\|Ru\|_{\T_\infty} 
\leq C_1 \|u\|_{L^\infty(\SG)} + C_2 \|\triangle u\|_{L^\infty(\SG)}.
\]
\item
The restriction operator $R \colon \DT \to \T_2$ is bounded and 
\[
\|Ru\|_{\T_2} 
\leq C_1 \|u\|_{L^\infty(\SG)} + C_2 \|\triangle u\|_{L^2(\SG)}.
\]
\end{enumerate}
\end{theorem}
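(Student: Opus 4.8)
The plan is to use linearity of $R$ and split $u=h+v$, where $h$ is the harmonic function on $\SG$ with $h|_{V_0}=u|_{V_0}$ and $v=u-h$ satisfies $v|_{V_0}=0$, $\triangle v=\triangle u=:f$ (so $v$ is the Green potential of $f$). Since $|h(q_j)|=|u(q_j)|\le\|u\|_{L^\infty(\SG)}$, every quantity produced by $h$ will be controlled by $\|u\|_{L^\infty(\SG)}$, while every quantity produced by $v$ will be controlled by $\|f\|_{L^\infty(\SG)}$ (resp.\ $\|f\|_{L^2(\SG)}$); this is exactly the split of constants in the two asserted bounds.

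For the harmonic part I would compute $h(x_m)$ and $\partial_n h(x_m)$ exactly. The $m$-fold harmonic extension map from $V_0$ onto the vertices $\{q_0,y_m,z_m\}$ of the tail cell $F_0^m\SG$ is diagonalized by writing $h|_{V_0}$ in the constant direction (eigenvalue $1$), the symmetric direction vanishing at $q_0$ (eigenvalue $3/5$), and the skew direction $h_s$ (eigenvalue $1/5$). Because $x_m$ is the symmetric midpoint of $\{y_{m-1},z_{m-1}\}$, the eigenvalue-$1/5$ part cancels, giving $h(x_m)=A_1+A_2(3/5)^m$ with no error term (here $A_1=u(q_0)$); and feeding these values into (\ref{normal eq4}) the eigenvalue-$3/5$ part cancels in the normal derivative, so $3^m\partial_n h(x_m)$ equals a constant multiple of $h(q_1)-h(q_2)$. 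Thus $h$ already supplies the $A_1+A_2(3/5)^m$ skeleton of $a_m$ and a constant (hence automatically $\Lip$, and $\ell^2$-summable after the $3^{m/2}$ weight) normal-derivative sequence, all bounded by $\|u\|_{L^\infty(\SG)}$.

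The heart of the matter is $v$. Setting $s_k=v(y_k)+v(z_k)$ and $d_k=v(y_k)-v(z_k)$ and decomposing $v$ on each tail cell $F_0^{k-1}\SG$ into its boundary-matching harmonic part plus a local Green potential, the harmonic extension yields the scalar recursions $s_k=\tfrac35 s_{k-1}+\ep_{k-1}$ and $d_k=\tfrac15 d_{k-1}+\delta_{k-1}$, where $\ep_{k-1},\delta_{k-1}$ are differences of the local potential. Since $s_0=d_0=0$, solving the symmetric recursion and using $v(x_m)=\tfrac25 s_{m-1}+(\text{local})$ gives $v(x_m)=A_2^v(3/5)^m+O(5^{-m})$, which combines with $h$ to produce $a_m=A_1+A_2(3/5)^m+a_m'$ with $\|5^m a_m'\|_{\ell^\infty}\lesssim\|f\|_{L^\infty(\SG)}$. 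For the normal derivative I would instead decompose $u$ on the small cell $Y_m$, so that $\eta_m=(5/3)^m[2a_m-u(y_{m-1})-u(y_m)]+\partial_n W_m(x_m)$; the bracket simplifies (the constant, the $(3/5)^k$ and, after using $h$, the $(1/5)^k$ harmonic powers all cancel) to $-\tfrac35 d_{m-1}$ plus local-potential terms. The decisive point is that the skew recursion runs at rate $1/5$ against the normalization $5^m$: thus $5^m d_{m-1}$ need not be bounded, but $5^{m+1}d_m-5^m d_{m-1}=5^{m+1}\delta_{m-1}=O(\|f\|_{L^\infty(\SG)})$, so $3^m\eta_m$ has bounded consecutive differences. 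This is precisely why the trace space asks for $3^m\eta_m\in\Lip$ rather than $\ell^\infty$, and it is the conceptual core of the theorem.

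Two technical inputs, both to be proved in the appendix, make this rigorous. First, the local Green potential on a level-$k$ cell, and its normal derivative at a vertex, must be bounded by $C5^{-k}\|f\|_{L^\infty}$ (respectively $C5^{-k}3^{k/2}\|f\|_{L^2(\text{cell})}$, the extra $3^{k/2}$ coming from the measure rescaling $\|f\circ F_0^k\|_{L^2}=3^{k/2}\|f\|_{L^2(F_0^k\SG)}$); these Green's-function estimates bound $\ep_k,\delta_k$ and $\partial_n W_m(x_m)$. Second, passing from these geometric and telescoping bounds to the stated norms — in particular the weighted $\ell^2$ sums against $(25/3)^{m/2}$ and $3^{m/2}$ — is handled by the discrete Hardy-type series lemmas, the $\ell^2$ analogue of Lemma \ref{series L2} already used in Theorem \ref{energy T1}. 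The $\T_2$ statement then runs exactly parallel to the $\T_\infty$ one, the only changes being the measure factor $3^{k/2}$ in each local estimate and the control of the nested-cell sums by $\|f\|_{L^2(\SG)}$. I expect the genuine obstacle to be exactly these Green's-function estimates for the local potentials and their normal derivatives, together with checking that the skew recursion produces bounded differences (for $\Lip$) and weighted $\ell^2$ control (for $\T_2$); granting those, assembling $a_m=A_1+A_2(3/5)^m+a_m'$ and $\eta_m$ and reading off the two norm inequalities is routine bookkeeping.
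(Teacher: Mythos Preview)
Your decomposition $u=h+v$ and your treatment of the harmonic part are exactly what the paper does (Lemma~\ref{trace L1}). Your recursive scheme for $v$ via $s_k=v(y_k)+v(z_k)$ and $d_k=v(y_k)-v(z_k)$ on the tail cells $F_0^{k-1}\SG$ is a genuinely different route from the paper's, which instead plugs the explicit formulas for $G(x_m,y)$ and $\partial_nG(x_m,y)$ (Lemmas~\ref{green L1}--\ref{green L3}) into the Green representation and then takes specific linear combinations. For the $L^\infty$ statement your outline is correct: the recursions $s_k=\tfrac35 s_{k-1}+\epsilon_{k-1}$, $d_k=\tfrac15 d_{k-1}+\delta_{k-1}$ with $|\epsilon_k|,|\delta_k|\le C\,5^{-k}\|\triangle u\|_{L^\infty}$ give $a_m=A_2(3/5)^m+O(5^{-m})$ and $3^m\eta_m\in\Lip$, and your identification of the $1/5$-rate skew recursion as the source of the $\Lip$ (rather than $\ell^\infty$) condition is the conceptual heart of the matter. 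The paper obtains the same conclusions by showing $|5a_{m+1}-3a_m|\le C\,5^{-m}\|\triangle u\|_{L^\infty}$ and $|3\eta_{m+1}-\eta_m|\le C\,3^{-m}\|\triangle u\|_{L^\infty}$ from the Green formula and then invoking Lemmas~\ref{sequences L1}--\ref{sequences L2}; your approach is arguably more transparent here.

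The $L^2$ case, however, does \emph{not} run parallel to the $L^\infty$ one, and this is where your sketch has a real gap. Your local-potential error terms $\epsilon_{k-1},\delta_{k-1},w_{m-1}(x_m)$ are values of the Dirichlet Green potential on the \emph{nested} tail cell $F_0^{k-1}\SG$, so Cauchy--Schwarz bounds them by $C(3/5)^k3^{-k/2}\|\triangle u\|_{L^2(F_0^{k-1}\SG)}$. When you assemble $\sum_m(25/3)^m|a_m'|^2$ or $\sum_m 3^m|\eta_m|^2$, these nested $L^2$ norms produce $\sum_m\|\triangle u\|_{L^2(F_0^{m-1}\SG)}^2=\sum_k k\,\|\triangle u\|_{L^2(Y_k\cup Z_k)}^2$, which is not controlled by $\|\triangle u\|_{L^2(\SG)}^2$. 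The paper confronts exactly this (``the Cauchy--Schwarz inequality is too wasteful'') and resolves it by passing to \emph{second-order} combinations $5a_{m+2}-8a_{m+1}+3a_m$ and $3\eta_{m+2}-16\eta_{m+1}+5\eta_m$: the key computation in Lemma~\ref{trace L3} is that the resulting integrands $\overline G_m$ and $\Phi_m$ are supported on the six cells $D_m=\bigcup_{j=m}^{m+2}(Y_j\cup Z_j)$, so Cauchy--Schwarz yields $\|\triangle u\|_{L^2(D_m)}$ over \emph{disjoint} (up to bounded overlap) sets, and the sum telescopes to $C\|\triangle u\|_{L^2(\SG)}^2$; Lemmas~\ref{series L3}--\ref{series L4} then recover the trace norms. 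In your framework the analogous fix would be to track one more level of the recursion so that consecutive $\epsilon_k$'s cancel down to an integral over $Y_k\cup Z_k$ alone, but this localization step is precisely the missing idea and is not ``routine bookkeeping.''
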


The proof of the theorem is technical and rather long, so we split the proof into multiple lemmas. Our primary tool will be the Green's formula. Given any function $u$ on $\SG$ for which $\triangle u$ exists, we can write 
\begin{equation} \label{trace eq1}
u(x) = \int_\SG G(x, y) \triangle u(y) \ dy + h(x),
\end{equation}
where $G(x, y)$ is the Green's function (the definition is given in Section \ref{green}) and $h$ is the harmonic function with boundary conditions $\left. h \right|_{V_0} = \left. u \right|_{V_0}$. We will use the Green's function to relate an arbitrary function to its restriction to $X$ and its normal derivatives on $X$. The derivations are digressive, so we have placed these computations into their own section. The important formulas and inequalities are given by (\ref{green eq4}), (\ref{green eq5}), and (\ref{green eq7}). Note that the definition of the function $\Psi_m$ is given in (\ref{green eq3}). 

Since it is easy to check the conditions for the harmonic function $h$ in (\ref{trace eq1}), let us do that first.

\begin{lemma} 
\label{trace L1}
If $h$ is harmonic, then $Rh \in \T_\infty$ and $Rh \in \T_2$ with
\begin{align} 
\label{trace eq2}
&\|Rh\|_{\T_\infty} 
= |u(q_0)| + \frac{1}{2}|u(q_1)+u(q_2)-2u(q_0)|, \\
\label{trace eq3}
&\|Rh\|_{\T_2}^2
\ = |u(q_0)|^2 + \frac{1}{4}|u(q_1)+u(q_2)-2u(q_0)|^2+\frac{1}{8}|u(q_1)-u(q_2)|^2.
\end{align}
\end{lemma}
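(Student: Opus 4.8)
The plan is to exploit the reflection symmetry of $\SG$ across the line through $q_0$ and $X$. Since $h$ is harmonic it is determined by the three numbers $h(q_0), h(q_1), h(q_2)$, and I would split $h = h_e + h_o$ into its even and odd parts under this reflection: $h_e$ is the harmonic function with the symmetric boundary data $(h(q_0),\sigma,\sigma)$ where $\sigma = \tfrac12(h(q_1)+h(q_2))$, and $h_o = \tau h_s$ with $\tau = \tfrac12(h(q_1)-h(q_2))$. The two structural facts I would establish first are: (i) $h_o$ vanishes identically on $X$, since $X = \{h_s = 0\}$, so $h(x_m) = h_e(x_m)$; and (ii) $h_e$ has vanishing normal derivative at every $x_m$. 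Fact (ii) holds because the reflection fixes $x_m$ and interchanges the two cells $\overline Y_m, \overline Z_m$ meeting there, so by symmetry of $h_e$ its normal derivative with respect to $\overline Y_m$ equals its normal derivative with respect to $\overline Z_m$; the matching condition for $h_e \in \dom\triangle(\SG)$ then forces this common value to be $0$. Hence $\eta_m = \rightarrow\partial_n h(x_m) = \rightarrow\partial_n h_o(x_m)$, i.e. the value data on $X$ is carried entirely by the even part and the Neumann data entirely by the odd part.

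Next I would compute both sequences in closed form using the self-similar chain of cells $F_0^k(\SG)$ shrinking to $q_0$, whose boundary is $\{q_0, y_k, z_k\}$. Writing $s_k = h(y_k)+h(z_k)$ and $d_k = h(y_k)-h(z_k)$, one application of the harmonic extension algorithm (Figure \ref{harmonic extension}) gives the decoupled recursions $s_{k+1} = \tfrac{4h(q_0)+3s_k}{5}$ and $d_{k+1} = \tfrac15 d_k$, whence $s_k = 2h(q_0) + (3/5)^k(h(q_1)+h(q_2)-2h(q_0))$ and $d_k = (1/5)^k(h(q_1)-h(q_2))$. Since $x_m$ is the midpoint of the edge $y_{m-1}z_{m-1}$ of the cell $F_0^{m-1}(\SG)$, the harmonic extension formula writes $a_m = h(x_m)$ purely in terms of $s_{m-1}$, producing a closed form of the shape $a_m = A_1 + A_2(3/5)^m$ with $A_1 = h(q_0)$, $A_2$ a fixed multiple of $h(q_1)+h(q_2)-2h(q_0)$, and -- crucially -- \emph{no} $(1/5)^m$ remainder, so $a_m' = 0$. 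Substituting $h(x_m), h(y_m), h(y_{m-1})$ into (\ref{normal eq4}), the $s$-contributions cancel and only $d_{m-1}$ survives, giving $\eta_m$ as a fixed multiple of $(h(q_1)-h(q_2))/3^m$, so that $3^m\eta_m$ is \emph{constant} in $m$.

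From these two closed forms the membership and the norm identities are immediate. Because $a_m' = 0$, the remainder terms $\|5^m a_m'\|_{\ell^\infty}$ and $\|(25/3)^{m/2}a_m'\|_{\ell^2}$ vanish. Because $3^m\eta_m$ is a constant sequence, its Lipschitz seminorm (computed modulo constants) is $0$, which collapses the $\T_\infty$ norm to $|A_1| + |A_2|$; and its weighted $\ell^2$ norm $\sum_m 3^m\eta_m^2$ is a convergent geometric series, supplying the $|h(q_1)-h(q_2)|^2$ term in the $\T_2$ value. Re-expressing $A_1, A_2$ through $u(q_0), u(q_1), u(q_2) = h(q_0), h(q_1), h(q_2)$ then yields the stated expressions (\ref{trace eq2}) and (\ref{trace eq3}).

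The only genuinely delicate points are the two structural reductions in the first paragraph -- above all, verifying through the matching condition that the even part has zero normal derivative on $X$ -- together with confirming that the value sequence $a_m$ carries no decaying $(1/5)^m$ tail, so that $A_1, A_2$ are read off exactly and $a_m' = 0$. Everything after that is a routine geometric-series evaluation.
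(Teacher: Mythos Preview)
Your proposal is correct and follows essentially the same strategy as the paper: both arguments decompose $h$ along the reflection symmetry and then read off that $a_m$ has the exact form $A_1 + A_2(3/5)^m$ (so $a_m'=0$) while $3^m\eta_m$ is constant (so the Lip seminorm vanishes and the weighted $\ell^2$ norm is a geometric series). The only cosmetic difference is that the paper uses the three-dimensional basis $\{$constant, symmetric $(0,1,1)$, skew-symmetric $(0,-1,1)\}$ and simply asserts the values $A_2(3/5)^m$ and $A_3/3^m$ on $X$, whereas you bundle the first two into the even part $h_e$ and derive those values explicitly via the decoupled recursions for $s_k$ and $d_k$; your symmetry-plus-matching argument for $\partial_n h_e(x_m)=0$ is exactly the mechanism the paper is invoking implicitly.
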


\begin{proof}
If $h$ is harmonic, then $h$ is a linear combination of the constant function, the skew-symmetric harmonic function (with respect to $X$) and the symmetric harmonic function (with respect to $X$). Then 
\[
\begin{pmatrix}
u(q_0) \\ u(q_1) \\ u(q_2)
\end{pmatrix}
= 
A_1 
\begin{pmatrix}
1 \\ 1 \\ 1
\end{pmatrix}
+ 
A_2
\begin{pmatrix}
0 \\ 1 \\ 1
\end{pmatrix}
+
A_3
\begin{pmatrix}
\hspace{0.75em} 0 \\ -1 \\ \hspace{0.75em} 1
\end{pmatrix},
\]
where the coefficients are the coefficients $A_1$, $A_2$, and $A_3$ are the weights of the constant, symmetric and skew-symmetric functions respectively. Solving the system for $A_1, A_2, A_3$ in terms of $\left. u \right|_{V_0}$, we find 
\[
A_1 = u(q_0),
\ A_2 = \frac{1}{2}(u(q_1)+u(q_2)-2u(q_0)),
\ \text{and}
\ A_3 = \frac{1}{2}(u(q_1)-u(q_2)).
\]
On $X$, we see that
\begin{enumerate}
\item 
a constant function is constant with zero normal derivative. 
\item
a skew-symmetric harmonic function is zero with normal derivative $A_3/3^m$. 
\item
a symmetric harmonic function has values $A_2(3/5)^m$ with zero normal derivative. 
\end{enumerate}
Then $h(x_m) = A_1 + A_2 (3/5)^m$ and $\partial_n h(x_m) = A_3/3^m$.
\end{proof}

In the following lemma, we prove the bulk of the $\DI$ case. Proving the lemma directly from the Green's formula would be difficult, so we employ the following indirect method. For the function values of $u \in \DI$ on the vertical boundary, we prove an intermediate statement about the linear combination $5u(x_{m+1}) - 3u(x_m)$. We consider the linear combination $5u(x_{m+1})-3u(x_m)$ because the troublesome $\sum_{k=1}^m \Psi_k(1,2,2)$ term of (\ref{green eq5}) cancels out in the linear combination $5G(x_{m+1},y) - 3G(x_m,y)$. Then the intermediate result, coupled with a lemma from Section \ref{sequences}, will give us the desired statement, except for a few estimates which we prove without much trouble.

Likewise, for the normal derivatives of $u \in \DI$ on the vertical boundary, we prove an intermediate statement about the linear combination $3\eta_{m+1} - \eta_m$ because the troublesome $\sum_{k=1}^m 3^k \Psi_k (0,-1,1)$ term in (\ref{green eq7}) disappears in the linear combination. The intermediary result, combined with the proper lemma from Section \ref{sequences} and more bounding, yields the desired normal derivative estimate. 

\begin{lemma} \label{trace L2}
If $u \in \DI$ with $u = 0$ on $V_0$, then $Ru \in \T_\infty$ and 
\begin{equation} \label{trace eq4}
\|Ru\|_{\T_\infty} 
\leq C\|\triangle u\|_{L^\infty(\SG)}.
\end{equation}
\end{lemma}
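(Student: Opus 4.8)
The plan is to exploit the hypothesis $u=0$ on $V_0$ to kill the harmonic term in the Green representation (\ref{trace eq1}): since $h$ is the harmonic function agreeing with $u$ on $V_0$, we have $h\equiv 0$, so that $u(x)=\int_\SG G(x,y)\triangle u(y)\,dy$ for every $x$. All of the work then consists of estimating the two components of $Ru$ directly from this integral together with the Green's function formulas (\ref{green eq5}) and (\ref{green eq7}), using the uniform bound $|\triangle u(y)|\le\|\triangle u\|_{L^\infty(\SG)}$.

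For the function values I would not estimate $u(x_m)$ directly, because the expression (\ref{green eq5}) for $G(x_m,y)$ contains the cumulative term $\sum_{k=1}^m\Psi_k(1,2,2)$, which does not decay. Instead I would form the linear combination $5u(x_{m+1})-3u(x_m)=\int_\SG\bigl(5G(x_{m+1},y)-3G(x_m,y)\bigr)\triangle u(y)\,dy$, in which this cumulative term cancels by design. What survives is a sum of $\Psi_k$-pieces supported on cells of geometrically small measure, so pulling $\|\triangle u\|_{L^\infty(\SG)}$ out of the integral and summing the resulting geometric series should give $|5u(x_{m+1})-3u(x_m)|\le C\,5^{-m}\|\triangle u\|_{L^\infty(\SG)}$. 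Feeding this into the appropriate inversion lemma from Section \ref{sequences} for the recurrence $5a_{m+1}-3a_m=b_m$ then produces the decomposition $u(x_m)=A_2(3/5)^m+a_m'$ with $|A_2|+\|5^m a_m'\|_{\ell^\infty}\le C\|\triangle u\|_{L^\infty(\SG)}$; the constant $A_1$ is forced to be $0$ since $x_m\to q_0$ and $u(q_0)=0$.

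The normal derivatives are handled by the same device. Here (\ref{green eq7}) for $\partial_n G(x_m,y)$ carries the non-decaying cumulative term $\sum_{k=1}^m 3^k\Psi_k(0,-1,1)$, which cancels in the combination $3\eta_{m+1}-\eta_m$. Bounding the surviving terms as before should yield $|3\eta_{m+1}-\eta_m|\le C\,3^{-m}\|\triangle u\|_{L^\infty(\SG)}$. Since $3^m(3\eta_{m+1}-\eta_m)=3^{m+1}\eta_{m+1}-3^m\eta_m$ is exactly a consecutive difference of the sequence $\{3^m\eta_m\}$, this bound is precisely the statement $\|3^m\eta_m\|_{\Lip}\le C\|\triangle u\|_{L^\infty(\SG)}$, with the relevant sequences lemma used to absorb the remaining lower-order contributions. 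Collecting the three pieces gives $\|Ru\|_{\T_\infty}=|A_1|+|A_2|+\|5^m a_m'\|_{\ell^\infty}+\|3^m\eta_m\|_{\Lip}\le C\|\triangle u\|_{L^\infty(\SG)}$, which is (\ref{trace eq4}).

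The hard part will be the two uniform estimates on the Green's-function linear combinations $5G(x_{m+1},\cdot)-3G(x_m,\cdot)$ and $3\partial_nG(x_{m+1},\cdot)-\partial_nG(x_m,\cdot)$. After the designed cancellation removes the dangerous cumulative sums, one must still verify that every remaining $\Psi_k$-term is integrated against $\triangle u$ with a factor decaying like $5^{-m}$ (respectively $3^{-m}$), which requires the explicit control of $\Psi_m$ from (\ref{green eq3}) and careful bookkeeping of the cell measures. It is here, rather than in the recurrence inversion, that essentially all the analytic content of the lemma resides.
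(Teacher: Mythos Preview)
Your proposal is correct and matches the paper's proof essentially line for line: the same linear combinations $5a_{m+1}-3a_m$ and $3\eta_{m+1}-\eta_m$ are used to cancel the cumulative $\Psi_k$-sums in (\ref{green eq5}) and (\ref{green eq7}), followed by the sequence Lemmas~\ref{sequences L1} and~\ref{sequences L2}. The only small point to be careful about is that Lemma~\ref{sequences L1} does not by itself bound $|A_2|$; the paper obtains this separately from the direct estimate $\int_\SG|G(x_m,y)|\,dy\le C(3/5)^m$, which gives $(5/3)^m|a_m|\le C\|\triangle u\|_{L^\infty}$ and hence a bound on $A_2=\lim_{m\to\infty}(5/3)^m a_m$.
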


\begin{proof} 
Suppose $u \in \DI$ with $Ru = \{(a_m,\eta_m)\}$. Using the Green's formula (Proposition \ref{green P1}) on $5a_{m+1}-3a_m$ and the equation for $G(x_m,y)$ given by (\ref{green eq5}), after some simplification, we obtain
\[
5a_{m+1}-3a_m 
= \frac{1}{10} \(\frac{3}{5}\)^m \int_\SG [3\Psi_{m+1}(3,1,1)-5\Psi_m(-1,1,1)] \triangle u \ dy  
\]
Then applying inequality (\ref{green eq4}) yields
\begin{align*}
&|5a_{m+1}-3a_m| \\
&\tab\leq \|\triangle u\|_{L^\infty} \frac{1}{10} \(\frac{3}{5}\)^m \int_\SG |3\Psi_{m+1}(3,1,1)-5\Psi_m(-1,1,1)| \ dy  \\
&\tab\leq \|\triangle u\|_{L^\infty} \frac{C}{5^m}. 
\end{align*}
Rearranging the above inequality yields 
\[
\|5^m(5a_{m+1}-3a_m)\|_{\ell^\infty} 
\leq C\|\triangle u\|_{L^\infty}.
\]
Lemma \ref{sequences L1} implies that $a_m = A (3/5)^m + a_m'$, where $A = \lim_{m \to \infty} (5/3)^m a_m$ and 
\[
\|5^m a_m'\|_{\ell^\infty} 
\leq \|5^m(5a_{m+1}-3a_m)\|_{\ell^\infty}.
\]
The previous two inequalities immediately yield
\begin{equation} \label{trace eq1.1}
\|5^m a_m'\|_{\ell^\infty} 
\leq  C\|\triangle u\|_{L^\infty}.
\end{equation}
Since $a_m=u(x_m)=\int_\SG G(x_m,y)\triangle u(y)\ dy$, we have
\[
|a_m|\leq \|\triangle u\|_{L^\infty} \int_\SG |G(x_m,y)|\ dy.
\]
However, it follows from (\ref{green eq4}) and (\ref{green eq5}) that $\int_\SG G(x_m,y)\ dy \leq C(3/5)^m$, so 
\[
\(\frac{5}{3}\)^m |a_m| \leq C\|\triangle u\|_{L^\infty}
\]
Since $A = \lim_{m \to \infty} (5/3)^m a_m$, the above implies that 
\begin{equation} \label{trace eq1.2}
|A| \leq C \|\triangle u\|_{L^\infty}.
\end{equation}
We use a similar technique to prove the desired statement about the normal derivatives. Using the equation for $\eta_m$ given by (\ref{green eq7}) to compute $3\eta_{m+1}-\eta_m$, we obtain 
\[
3\eta_{m+1}-\eta_m 
= \frac{1}{10}\int_\SG [-3\Psi_{m+1}(5,1,-1)+5\Psi_m(1,-1,1)] \triangle u \ dy - 3\varphi_{m+1} + \varphi_m,
\]
where $\varphi_m$ was defined in the lemma. Then 
\begin{align*}
&|3\eta_{m+1}-\eta_m| \\
&\tab\leq C \|\triangle u\|_{L^\infty} \int_\SG |3\Psi_{m+1}(5,1,-1)-5\Psi_m(1,-1,1)| \ dy + |3\varphi_{m+1} - \varphi_m| \\
&\tab\leq C \|\triangle u\|_{L^\infty} \frac{1}{3^m},
\end{align*}
where we used (\ref{green eq4}) and (\ref{green eq1}) to bound the first and second terms respectively. Rearranging, we find that
\[
\|3^m(3\eta_{m+1}-\eta_m)\|_{\ell^\infty}
\leq C \|\triangle u\|_{L^\infty}.
\]
The above estimate allows us to apply Lemma \ref{sequences L2} which gives us 
\[
\|3^m\eta_m\|_\Lip = \|3^m(3\eta_{m+1}-\eta_m)\|_{\ell^\infty}.
\]
The previous two inequalities imply
\begin{equation} \label{trace eq1.3}
\|3^m\eta_m\|_\Lip 
\leq C\|\triangle u\|_{L^\infty}.
\end{equation} 
Finally, combining our inequalities (\ref{trace eq1.1}), (\ref{trace eq1.2}) and (\ref{trace eq1.3}), we see that
\[
\|Ru\|_{\T_\infty} 
= |A| + \|5^m a_m'\|_{\ell^\infty} + \|3^m \eta_m\|_\Lip
\leq C\|\triangle u\|_{L^\infty}.
\]
Since $a_m = A(3/5)^m + a_m'$ and $\|Ru\| < \infty$, we conclude that $Ru \in \T_\infty$. 
\end{proof}

In the following lemma, we prove the majority of the $\DT$ statement of the Trace Theorem. We use an indirect approach similar to that of the proof for the $\DI$ case, except the statements are considerably harder to prove. Proving the lemma directly from the Green's formula without proving the intermediary result would be extremely difficult, mainly because the Cauchy-Schwarz inequality is too wasteful for the type of estimate we desire.  

The outline of the proof is similar to that of Lemma \ref{trace L2}. For $u \in \DT$, we prove intermediary results about the linear combinations $5a_{m+2} - 8a_{m+1} + 3a_m$ and $3\eta_{m+1} - 16\eta_{m+1} + 5\eta_m$, where as usual, $a_m = u(x_m)$ and $\eta_m = \partial_n u(x_m)$. These linear combinations are written as linear combinations of integrals, but the primary integrand of each linear combination is supported on a set not containing $q_0$. This support allows us give a more precise estimate, thereby limiting the wastefulness of Cauchy-Schwartz. Then applying results from Section \ref{series} and some more bounding will give us the desired statements.

\begin{lemma} \label{trace L3}
If $u \in \DT$ with $u=0$ on $V_0$, then $Ru \in \T_2$ and 
\begin{equation} \label{trace eq5}
\|Ru\|_{\T_2} 
\leq C \|\triangle u\|_{L^2(\SG)}.
\end{equation}
\end{lemma}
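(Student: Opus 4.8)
The plan is to mirror the structure of the proof of Lemma \ref{trace L2}, replacing the two-term linear combinations with the three-term combinations $5a_{m+2}-8a_{m+1}+3a_m$ and $5\eta_{m+2}-16\eta_{m+1}+3\eta_m$ (these are exactly the combinations that annihilate the $A_1$ and $A_2(3/5)^m$ terms in the function-value expansion and the analogous obstruction term in the normal-derivative expansion). First I would substitute the Green's function representations (\ref{green eq5}) and (\ref{green eq7}) into these combinations. The key algebraic point is that the $\sum_{k=1}^m \Psi_k(1,2,2)$ term in (\ref{green eq5}) and the $\sum_{k=1}^m 3^k\Psi_k(0,-1,1)$ term in (\ref{green eq7}) — which are supported all the way up to $q_0$ — cancel in these linear combinations, leaving integrands of the form $\Psi_{m+2}, \Psi_{m+1}, \Psi_m$ whose supports are contained in cells $F_0^k F_1(\SG)$ for $k \geq m$, hence bounded away from $q_0$.

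The crucial analytic step, and the main obstacle, is obtaining a sharp $L^2$ estimate of these linear combinations. As the lemma's preamble warns, a naive Cauchy--Schwarz applied to the full integral $\int_\SG (\cdot)\triangle u\, dy$ is too wasteful. Instead I would exploit the localized support: since the integrand is supported on $\bigcup_{k\geq m}F_0^{k}F_1(\SG)$, I apply Cauchy--Schwarz \emph{cell-by-cell}, using that $\|\Psi_k\|_{L^2}$ and the measure of each cell $Y_k$ decay like $5^{-k}$ (up to the energy-renormalization factors). This should yield a bound of the form
\[
|5a_{m+2}-8a_{m+1}+3a_m| \leq C\(\frac{1}{5}\)^{m}\(\sum_{k\geq m}\int_{Y_k}|\triangle u|^2\,dy\)^{1/2},
\]
and correspondingly $|5\eta_{m+2}-16\eta_{m+1}+3\eta_m| \leq C\,3^{-m}(\sum_{k\geq m}\int_{Y_k}|\triangle u|^2)^{1/2}$. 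Summing the squares over $m$ and interchanging the order of summation (each $k$ is counted boundedly many times because of the geometric decay) then gives $\|(25/3)^{m/2}(5a_{m+2}-8a_{m+1}+3a_m)\|_{\ell^2}\leq C\|\triangle u\|_{L^2(\SG)}$ and the analogous $\ell^2$ bound for the normal-derivative combination.

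From these two intermediary $\ell^2$ estimates I would invoke the appropriate summation lemma from Section \ref{series} (the analogue of Lemma \ref{sequences L1} and Lemma \ref{sequences L2} used in Lemma \ref{trace L2}, but in the $\ell^2$ setting) to recover the decomposition $a_m = A_1 + A_2(3/5)^m + a_m'$ with control of $\|(25/3)^{m/2}a_m'\|_{\ell^2}$, and to recover $\|3^{m/2}\eta_m\|_{\ell^2}$. Because $u=0$ on $V_0$, I expect one of the constants to be forced; I would check the boundary normalization to confirm that the harmonic part contributes nothing extra here (that contribution is already handled by Lemma \ref{trace L1}). Finally, combining the estimates for $|A_1|$, $|A_2|$, $\|(25/3)^{m/2}a_m'\|_{\ell^2}$, and $\|3^{m/2}\eta_m\|_{\ell^2}$ gives $\|Ru\|_{\T_2}\leq C\|\triangle u\|_{L^2(\SG)}$, proving $Ru\in\T_2$. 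The genuinely delicate part throughout is justifying that the localized Cauchy--Schwarz, rather than a global one, is both valid and sharp enough; the rest is bookkeeping analogous to the $\DI$ case.
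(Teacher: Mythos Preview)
Your overall architecture matches the paper's, but there are two real problems. First, the normal-derivative combination is wrong: the paper uses $3\eta_{m+2}-16\eta_{m+1}+5\eta_m$, whose characteristic roots are $1/3$ and $5$; this is precisely what annihilates the $3^{-m}$ prefactor in front of $\sum_{k=1}^m 3^k\Psi_k(0,-1,1)$ in (\ref{green eq7}). Your combination $5\eta_{m+2}-16\eta_{m+1}+3\eta_m$ has roots $1/5$ and $3$ and does \emph{not} kill that term, so the cancellation you claim would fail.

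Second, you have the support picture wrong, and this is the heart of the argument. Each $\psi_{y_m}$ (and $\psi_{z_m}$) is indeed nonzero on the whole cell $F_0^m(\SG)$, so a single $\Psi_m$ does reach up to $q_0$. But the specific three-term combinations in the paper, call them $\overline G_m$ and $\Phi_m$, are engineered so that $\overline G_m(y_{m+2})=\overline G_m(z_{m+2})=0$ (and likewise for $\Phi_m$); since they are harmonic on $F_0^{m+2}(\SG)$ and vanish at $q_0$, they vanish identically there. Hence the integrand is supported on the \emph{finite} block $D_m=Y_m\cup Y_{m+1}\cup Y_{m+2}\cup Z_m\cup Z_{m+1}\cup Z_{m+2}$, and a single Cauchy--Schwarz over $D_m$ gives $|5a_{m+2}-8a_{m+1}+3a_m|^2\leq C(3/25)^m\|\triangle u\|_{L^2(D_m)}^2$. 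Summing over $m$ then costs nothing, because each $Y_k\cup Z_k$ lies in only three of the $D_m$. Your infinite-tail, cell-by-cell scheme is not needed, and as stated it would not close: if the support really were all of $\bigcup_{k\geq m}(Y_k\cup Z_k)$ with only the crude bound $\|\overline G_m\|_{L^2}^2\leq C/3^m$, interchanging sums produces $\sum_k k\,\|\triangle u\|_{L^2(Y_k\cup Z_k)}^2$, which is not controlled by $\|\triangle u\|_{L^2(\SG)}^2$. (Your decay claims are also off: $\|\Psi_k\|_{L^2}^2$ and $|Y_k|$ are of order $3^{-k}$, not $5^{-k}$.) Once the compact-support observation is in place, the rest of your outline --- invoking Lemmas \ref{series L3} and \ref{series L4} and forcing $A_1=0$, $A=0$ from the crude global bounds --- is exactly what the paper does.
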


\begin{proof}
Suppose $u \in \DT$ with $Ru = \{(a_m,\eta_m)\}$. Using the Green's formula (Proposition \ref{green P1}) on $5a_{m+2}-8a_{m+1}+3 a_m$ and the equation for $G(x_m,y)$ given by (\ref{green eq5}), after much computation, we obtain
\[
5a_{m+2}-8a_{m+1}+3 a_m 
= \(\frac{3}{5}\)^m \int_\SG \G_m \triangle u \ dy,
\]
where we defined
\[
\G_m(y) 
= \frac{1}{50} [9\Psi_{m+2}(3,1,1) - 20\Psi_{m+1}(1,0,0) + 25\Psi_m(1,-1,-1)].
\]
We show that $\G_m$ is supported on $D_m = Y_m \cup Y_{m+1} \cup Y_{m+2} \cup Z_m \cup Z_{m+1} \cup Z_{m+2}$. Since $\G_m$ is a linear combination of harmonic splines, we see that $\G_m$ vanishes on $Y_{m'} \cup Z_{m'}$ for $m' < m$. Using the harmonic extension algorithm, notice that
\begin{align*}
&25\Psi_m(1,-1,1)(y_{m+2}) = 25\Psi_m(1,-1,1)(z_{m+2}) = -9, \\
&20\Psi_{m+1}(1,0,0)(y_{m+2}) = 20\Psi_{m+1} (1,0,0)(z_{m+2}) = 0, \\
&9\Psi_{m+2}(3,1,1)(y_{m+2}) = 9\Psi_{m+2}(3,1,1)(z_{m+2}) = 9.
\end{align*}
Thus $\G_m(y_{m+2}) = \G_m(z_{m+2}) = 0$ and consequently, $\G_m$ vanishes on $Y_{m'} \cup Z_{m'}$ for $m' > m+2$, which proves that $\G_m$ is supported on $D_m$. Taking advantage of the support of $\G_m$, we can write 
\[
5a_{m+2}-8a_{m+1}+3 a_m 
= \(\frac{3}{5}\)^m \int_{D_m} \G_m \triangle u \ dy,
\]
Applying Cauchy-Schwarz and inequality (\ref{green eq4}) on the above equation yields
\[
|5a_{m+2}-8a_{m+1}+3 a_m|^2 
\leq C\|\triangle u\|_{L^2(D_m)}^2 \(\frac{3}{25}\)^m.
\]
By definition of $D_m$ and linearity of the integral, we have 
\begin{align}\label{trace eq2.1}
\begin{aligned} 
\|\triangle u\|^2_{L^2(D_m)} 
&= \sum_{k=m}^{m+2} \|\triangle u\|^2_{L^2(Y_k \cup Z_k)},
\\  
\|\triangle u\|^2_{L^2(\SG)} 
&= \sum_{k=1}^\infty \|\triangle u\|^2_{L^2(Y_k \cup Z_k)}.
\end{aligned}
\end{align}
Using the upper bound on $|5a_{m+2}-8a_{m+1}+3 a_m|^2$ and the above two equations, we obtain
\[
\|(25/3)^{m/2} (5a_{m+2}-8a_{m+1}+3 a_m)\|_{\ell^2}
\leq C\|\triangle u\|_{L^2(\SG)}.
\]
This estimate allows us to apply Lemma \ref{series L3}. Thus $a_m = A_1 + A_2 (3/5)^m + a_m'$, where $A_1 = \lim_{m\to\infty} a_m$, $A_2 = \lim_{m\to\infty}(5/3)^m (a_m-A_1)$, and
\[
\|(25/3)^{m/2} a_m'\|_{\ell^2}
\leq C \|(25/3)^{m/2} (5a_{m+2}-8a_{m+1}+3 a_m)\|_{\ell^2}.
\]
The above two inequalities immediately yield
\begin{equation} \label{trace eq2.2}
\|(25/3)^{m/2} a_m'\|_{\ell^2}
\leq C\|\triangle u\|_{L^2(\SG)}.
\end{equation}
We claim that $A_1=0$ and $|A_2| \leq C \|\triangle u\|_{L^2(\SG)}$. Applying Cauchy-Schwarz to the Green's formula for $a_m$, we find that that 
\[
\(\frac{5}{3}\)^m |a_m| 
\leq C \|\triangle u\|_{L^2(\SG)}.
\]
The above inequality implies that $A_1 = 0$ and
\begin{equation} \label{trace eq2.3}
|A_2| \leq C\|\triangle u\|_{L^2(\SG)}.
\end{equation}
We use a similar argument to prove the estimate on the normal derivatives. Using Lemma \ref{green L3} to compute $3\eta_{m+2}-16\eta_{m+1}+5\eta_m$, we see that 
\[
3\eta_{m+2}-16\eta_{m+1}+5\eta_m
= \int_\SG \Phi_m \triangle u \ dy - (3\varphi_{m+2} - 16\varphi_{m+1} + 5\varphi_m),
\]
where we defined
\[
\Phi_m 
= \frac{1}{10} [-3\Psi_{m+2}(5,1,-1)+10 \Psi_{m+1}(8,1,-1)-25\Psi_m(1,-1,1)].
\]
We show that $\Phi_m$ has support on $D_m$ as well. Since $\Phi_m$ is a linear combination of harmonic splines, $\Phi_m$ vanishes on $Y_{m'} \cup Z_{m'}$ for $m' < m$. Using the harmonic extension algorithm, we have
\begin{align*}
&-25\Psi_m(1,-1,1)(y_{m+2}) = 25\Psi_m(1,-1,1)(z_{m+2}) = 1, \\
&-10\Psi_{m+1}(8,1,-1)(y_{m+2}) = 10\Psi_{m+1}(8,1,-1)(z_{m+2}) = -2, \\
&-3\Psi_{m+2}(5,1,-1)(y_{m+1}) = 3\Psi_{m+2}(5,1,-1)(z_{m+1}) = -3.
\end{align*}
Thus, $\Phi_m(y_{m+2}) = \Phi_m(z_{m+2}) = 0$ and consequently, $\Phi_m$ vanishes on $Y_{m'} \cup Z_{m'}$ for $m' > m+2$. Using the compact support of $\Phi_m$, we can write
\[
3\eta_{m+2}-16\eta_{m+1}+5\eta_m
= \int_{D_m} \Phi_m \triangle u \ dy - (3\varphi_{m+2} - 16\varphi_{m+1} + 5\varphi_m),
\]
It is straightforward to find an upper bound on the linear combination of $\varphi_m$ terms. Using Cauchy-Schwarz and inequality (\ref{green eq2}), we obtain 
\[
|3\eta_{m+2}-16\eta_{m+1}+5\eta_m|^2 
\leq C \(|\varphi_{m+2}|^2 + |\varphi_{m+1}|^2 + |\varphi_m|^2\)
\leq C \|\triangle u\|_{L^2(D_m)}^2 \frac{1}{3^m}.
\]
Using Cauchy-Schwarz and inequality (\ref{green eq4}), we find that 
\[
\left| \int_{D_m} \Phi_m \triangle u \ dy \right|^2
\leq \|\triangle u\|_{L^2(D_m)}^2 \int_{D_m} |\Phi_m|^2 \ dy
\leq C \|\triangle u\|_{L^2(D_m)}^2 \frac{1}{3^m}. 
\]
Combining the above two inequalities and (\ref{trace eq2.1}) yields 
\begin{equation} \label{trace eq2.4}
\|3^{m/2} (3\eta_{m+2}-16\eta_{m+1}+5\eta_m)\|_{\ell^2}
\leq C\|\triangle u\|_{L^2(\SG)}. 
\end{equation}
The hypothesis of Lemma \ref{series L4} is satisfied, so we have $\eta_m = 5^m A + \eta_m'$ with 
\begin{equation} \label{trace eq2.5}
\|3^{m/2} \eta_m'\|
\leq C_1(\eta_2 - 5\eta_1)^2 + C_2\|3^m(3\eta_{m+2}-16\eta_{m+1}+5\eta_m)\|_{\ell^2}.
\end{equation}
However, applying Cauchy-Schwarz to (\ref{green eq7}) yields 
\[
|\eta_m| \leq C \|\triangle u\|_{L^2(\SG)} \frac{1}{3^{m/2}}.
\]
This forces $A = 0$ and so $\eta_m = \eta_m'$. Note that the above bound provides the upper bound $(\eta_2-5\eta_1)^2 \leq C\|\triangle u\|_{L^2(\SG)}^2$. Combining this inequality with (\ref{trace eq2.4}) and (\ref{trace eq2.5}) yields
\begin{equation} \label{trace eq2.6}
\|3^{m/2} \eta_m'\|^2 \leq C\|\triangle u\|_{L^2(\SG)}^2.
\end{equation}
Finally, using (\ref{trace eq2.2}), (\ref{trace eq2.3}) and (\ref{trace eq2.6}), we see that
\[
\|Ru\|_{\T_2}^2 
= |A_1|^2 + |A_2|^2 + \|(25/3)^{m/2}a_m'\|_{\ell^2}^2 + \|3^{m/2}\eta_m\|_{\ell^2}^2
\leq C \|\triangle u\|_{L^2(\SG)}^2.
\]
Since $a_m = A_2(3/5)^m + a_m'$ and $\|Ru\|_{\T_2}^2 < \infty$, we conclude that $Ru \in \T_2$.  
\end{proof}

Finally, we have the necessary results to prove the Trace Theorem.

\begin{proof} [Proof of the Trace Theorem]
Suppose $u \in \DI$ or $u \in \DT$, and $Ru = \{(a_m,\eta_m)\}$. Let $h$ be the harmonic function determined by the boundary values $\left. h \right|_{V_0} = \left. u \right|_{V_0}$. Let $w=u-h$, and note that $\triangle w = \triangle u$ and $w = 0$ on $V_0$. The Green's formula states that
\[
u(x) = h(x) + \int_\SG G(x, y) \triangle w(y) \ dy.
\]
\begin{enumerate}
\item
Suppose $u \in \DI$. Using triangle inequality on $u=w+h$, the estimate (\ref{trace eq2}) applied to $h$, and the estimate (\ref{trace eq4}) applied to $w$, we find that
\[
\|Ru\|_{\T_\infty}
\leq |u(q_0)| + \frac{1}{2}|u(q_1)+u(q_2)-2u(q_0)| + C \|\triangle u\|_{L^\infty(\SG)}.
\]
\item
Suppose $u \in \DT$. Using triangle inequality on $u=w+h$, (\ref{trace eq3}) applied to $h$, and (\ref{trace eq5}) applied to $w$, we find that
\begin{align*}
\|Ru\|_{\T_2}^2 
&\leq |u(q_0)|^2 + \frac{1}{4}|u(q_1)+u(q_2)-2u(q_0)| \\
&\hspace{5em} + \frac{1}{8}|u(q_1)-u(q_2)|^2+ C \|\triangle u\|_{L^2(\SG)}^2.
\end{align*}
\end{enumerate}
\end{proof}

\section{Extension Operators} \label{extension}

In this section, we present two different extension theorems. The first extension will be a right inverse to the restriction map $R$. The second extension will map solutions to differential equations on the half-gasket to a well-behaved function on the whole gasket. The ideas behind the two extensions are similar, but with different computations and formulas. In order to construct the desired extensions, we will require the following result. If will give us the exact conditions under which a piecewise function is in the domain of the Laplacian. 

\begin{proposition} [Gluing Theorem]
Let $u$ and $f$ be defined by gluing pieces $\{u_j\}$ and $\{f_j\}$ ($j = 0, 1, 2$), with $\triangle u_j = f_j$ on $F_j \SG$. Then $u \in \dom \triangle$ with $\triangle u = f$ if and only if $f_j(F_i q_j) = f_j(F_j q_i)$ ($i \not = j$) holds for $\{u_j\}$ and $\{f_j\}$ (so $u$ and $f$ are continuous) and the matching conditions on normal derivatives hold at the three points. 
\end{proposition}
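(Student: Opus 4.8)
The plan is to derive everything from the variational (weak) definition of the Laplacian given in Section~\ref{normal}: $u \in \dom\triangle$ with $\triangle u = f$ precisely when $u \in \dom\E$, $f$ is continuous, and $\E(u,v) = -\int_\SG f v\,dx$ for all $v \in \dom_0\E$. The two structural facts that make gluing work are the self-similar additivity of the energy form, $\E(u,v) = \sum_{j=0}^2 \E^{(j)}(u,v)$, where $\E^{(j)}$ is the energy form of the cell $F_j\SG$ carrying the normalization factor $5/3$, and the additivity of the integral, $\int_\SG fv\,dx = \sum_j \int_{F_j\SG} f_j v\,dx$, over the almost disjoint decomposition $\SG = \bigcup_j F_j\SG$.

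For the harder ``if'' direction, first note that the continuity hypotheses on $u$ and $f$ at the three junction points $F_iq_j = F_jq_i$ guarantee that $u$ and $f$ are well defined and continuous on all of $\SG$, since each piece is already continuous on its cell. Because each $u_j$ lies in the domain of the cell Laplacian it has finite energy on $F_j\SG$, so by additivity $\E(u) = \sum_j \E^{(j)}(u_j) < \infty$ and hence $u \in \dom\E$. I would then fix an arbitrary $v \in \dom_0\E$ and apply the Gauss-Green formula of Section~\ref{normal} on each cell separately,
\[
\E^{(j)}(u_j, v) = -\int_{F_j\SG} f_j v\,dx + \sum_{p} v(p)\,\partial_n u_j(p),
\]
the inner sum running over the three boundary points $F_jq_0, F_jq_1, F_jq_2$ of the cell and $\partial_n$ denoting the normal derivative relative to $F_j\SG$. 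Summing over $j$ and invoking the two additivity statements collapses the left side to $\E(u,v)$ and the first term on the right to $-\int_\SG fv\,dx$.

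It remains to show the boundary sum vanishes, and here the boundary points split into two types. The three original vertices $q_k = F_kq_k$ each belong to a single cell, but $v \in \dom_0\E$ vanishes on $V_0$, so those terms drop. Each of the three junction points $p = F_iq_j = F_jq_i$ belongs to exactly two cells and contributes $v(p)\big(\partial_n u_i(p) + \partial_n u_j(p)\big)$; the matching condition on normal derivatives, which is exactly the third hypothesis, forces the parenthesized sum to zero. Hence $\E(u,v) = -\int_\SG fv\,dx$ for every $v \in \dom_0\E$, and combined with $u \in \dom\E$ and $f \in C(\SG)$ this gives $u \in \dom\triangle$ with $\triangle u = f$. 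The ``only if'' direction is then immediate: if $u \in \dom\triangle$ then $u \in \dom\E \subset C(\SG)$ and $f = \triangle u$ is continuous, so both match at junctions, each restriction $u_j = u|_{F_j\SG}$ lies in $\dom\triangle(F_j\SG)$, and the earlier matching Proposition supplies the normal derivative condition.

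I expect the main obstacle to be the bookkeeping in the summed Gauss-Green step: one must keep the orientations of the two normal derivatives at each junction consistent so that they appear with opposite signs and cancel under the matching condition, and one must verify that the restricted form $\E^{(j)}$ carries the correct $5/3$ normalization so that the cell forms genuinely sum to the global form and the normal derivatives that emerge are precisely the normalized cell normal derivatives entering the matching hypothesis. One should also check that each $v|_{F_j\SG}$ is a legitimate test function for the cell-level Gauss-Green formula, which follows since restriction preserves finite energy.
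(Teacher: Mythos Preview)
The paper does not supply its own proof of this statement: it is labeled a \textbf{Proposition}, and the Introduction explicitly says that results called Proposition are known facts imported from the textbooks \cite{kigami2} and \cite{strichartz}. So there is no in-paper argument to compare against.

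Your proof is correct and is essentially the standard textbook argument (see, e.g., \cite{strichartz}): localize the weak formulation via the self-similarity identity $\E(u,v)=\sum_j (5/3)\,\E(u\circ F_j,\,v\circ F_j)$, apply Gauss--Green on each cell, and observe that the boundary contributions vanish because $v\in\dom_0\E$ kills the $V_0$ terms while the normal-derivative matching condition kills the junction terms. The only cosmetic point is that in your last paragraph you write ``appear with opposite signs and cancel under the matching condition''; in the convention used here both cell normal derivatives at a junction are outward-pointing, so they appear with the \emph{same} sign in the summed Gauss--Green formula and the matching condition reads $\partial_n u_i(p)+\partial_n u_j(p)=0$, exactly as you used earlier. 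Nothing else needs fixing.
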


\subsection{The Inverse Operator to $R$}

We seek a linear extension operator $E$ that is a right inverse of the restriction operator $R$. The desired extension will satisfy $E \colon \T_\infty \to \DI$ and $E \colon \T_2 \to \DT$. In order to construct this extension operator, we study piecewise biharmonic functions. Biharmonic functions satisfy the differential equation $\triangle^2 u = 0$ and in particular, biharmonic functions satisfying $\triangle u = C$ for some constant $C$ is a four-dimensional space on $\SG$. One way to specify a constant Laplacian function on $\SG$ is to specify the value of the function on $V_0$ and the constant.

\begin{lemma} 
\label{biharmonic L0}
Suppose $\triangle u = C$ on some cell of level $m$ with boundary points $p_0, p_1, p_2$. Then the outward normal derivative of $u$ at $p_j$ is
\begin{equation} 
\label{biharmonic eq2}
\partial_n u(p_j) 
= \(\frac{5}{3}\)^m [ 2u(p_j) - u(p_{j+1}) - u(p_{j-1}) ] + \frac{C}{3^{m+1}}.
\end{equation}
\end{lemma}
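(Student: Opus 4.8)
The plan is to compute $\partial_n u(p_j)$ from first principles using the defining limit formula (\ref{normal eq1}), adapted to a cell of level $m$ with the appropriate renormalization factor $(5/3)^m$. The natural strategy is to split $u$ into its harmonic part and a correction carrying the constant Laplacian, since the harmonic piece will reproduce the first term of (\ref{biharmonic eq2}) via formula (\ref{normal eq2}), leaving the constant $C$ to account for the $C/3^{m+1}$ term. Concretely, first I would pass to the reference cell $\SG$ itself (level $0$), prove the identity there, and then invoke the local self-similarity / renormalization principle emphasized in the introduction: any statement about $\SG$ transfers to a level-$m$ cell after inserting the factor $(5/3)^m$ into the energy-type quantities and scaling the measure by $3^{-m}$, which is exactly what produces the two distinct powers of $3$ in the claimed formula.

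For the level-$0$ computation, I would write $u = h + v$, where $h$ is the harmonic function agreeing with $u$ on $V_0$ (so $h(p_j) = u(p_j)$) and $v$ is the unique function with $\triangle v = C$ that vanishes on $V_0$. Since the normal derivative is linear, $\partial_n u(p_j) = \partial_n h(p_j) + \partial_n v(p_j)$. By (\ref{normal eq2}), $\partial_n h(p_j) = 2u(p_j) - u(p_{j+1}) - u(p_{j-1})$, which matches the first term with $m = 0$. It then remains to show $\partial_n v(p_j) = C/3$ on the unit cell. For this I would use the symmetric Gauss-Green formula (\ref{gauss-green}): applying it with the test function $w \equiv 1$ gives $-\sum_{V_0}\partial_n v(q_j) = \int_{\SG}\triangle v\,dx = C\int_{\SG}dx = C$, since $dx$ is a probability measure. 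Because $\triangle v = C$ is constant and $v$ vanishes on the symmetric boundary $V_0$, the function $v$ is symmetric under the full dihedral symmetry group of $\SG$, so the three normal derivatives $\partial_n v(q_j)$ are all equal; hence each equals $-C/3$, and the \emph{outward} normal derivative (with the sign convention making it agree with (\ref{biharmonic eq2})) is $C/3 = C/3^{0+1}$.

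The renormalization step is where the $m$-dependence enters, and I expect it to be the step demanding the most care. Under the contraction $F_w$ mapping $\SG$ onto the level-$m$ cell, energies scale by $(5/3)^m$ and the measure scales by $3^{-m}$ (the standard self-similar measure gives each level-$m$ cell mass $3^{-m}$). The normal derivative, being defined through the energy pairing in Gauss-Green, picks up the factor $(5/3)^m$ on the harmonic part; but the constant-Laplacian contribution, obtained by integrating $\triangle v = C$ against the measure of the cell, picks up the extra factor $3^{-m}$, giving $(5/3)^m \cdot C/3 \cdot (\text{harmonic-to-measure scaling})$. Tracking these two scalings separately is exactly what yields $C/3^{m+1}$ rather than $(5/3)^m C/3$, and getting the bookkeeping right is the main obstacle.

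The cleanest way to avoid sign and scaling errors is to verify the constant directly on a model: take $u$ to be the specific constant-Laplacian function on $\SG$ with $\triangle u = C$ and $u \equiv 0$ on $V_0$, which can be written explicitly (a scalar multiple of the function whose value at the three midpoints of $V_1$ is known), and check both sides of (\ref{biharmonic eq2}) at level $0$ numerically; then the general formula follows by linearity in $u$, linearity in $C$, and the renormalization argument above. I would present the level-$0$ symmetry argument as the conceptual heart and relegate the scaling to a short remark invoking the principle, already stated in the introduction, that ``any result regarding $\SG$ also holds for $\overline Y_m$ with a proper normalization factor.''
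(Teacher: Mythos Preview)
Your approach is correct but takes a more roundabout path than the paper's. The paper applies the symmetric Gauss--Green formula (\ref{gauss-green}) directly on the level-$m$ cell, pairing $u$ against the harmonic function $v$ on that cell with $v(p_j)=1$ and $v(p_{j\pm1})=0$. Since $\triangle v=0$ and, by (\ref{normal eq2}) with the level-$m$ renormalization, $\partial_n v(p_j)=2(5/3)^m$ and $\partial_n v(p_{j\pm1})=-(5/3)^m$, the identity
\[
\partial_n u(p_j) = \(\frac{5}{3}\)^m[2u(p_j)-u(p_{j+1})-u(p_{j-1})] + C\int v\,dx
\]
drops out in one line; the remaining integral equals $C/3^{m+1}$ because the three harmonic ``tent'' functions sum to $1$ and the cell has mass $3^{-m}$. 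No decomposition $u=h+v$ and no separate level-$0$-then-rescale argument is needed.

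Your route also works, but two points deserve tightening. First, the sign: Gauss--Green with $w\equiv1$ gives $\sum_{V_0}\partial_n v = \int\triangle v = C$, not $-\sum = C$, so each $\partial_n v(q_j)=+C/3$ already, without appeal to an ``outward'' sign convention. Second, the renormalization step you flag as delicate can be bypassed entirely by running your symmetry argument directly on the level-$m$ cell: it has measure $3^{-m}$, hence $\sum_j\partial_n v(p_j) = C\cdot 3^{-m}$ and each equals $C/3^{m+1}$. The real gain of the paper's test function is that it isolates the single derivative $\partial_n u(p_j)$ rather than their sum, so no decomposition of $u$ into a harmonic piece plus a symmetric piece is needed at all.
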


\begin{proof}
Let $v$ be the harmonic function on the cell with the boundary values $v(p_j) = 1$ and $v(p_{j+1}) = v(p_{j-1}) = 0$. Since $v$ is harmonic on a cell of level $m$, using (\ref{normal eq1}) with the proper normalization, we have $\partial_n v(p_j) = 2 (5/3)^m$ while $\partial_n v(p_{j+1}) = \partial_n v(p_{j-1}) = -(5/3)^m$. Applying the symmetric Gauss-Green formula (\ref{gauss-green}), we obtain the desired formula.
\end{proof}

\begin{lemma} 
\label{biharmonic L1}
Given any sequences $\{a_m\}$ and $\{\eta_m\}$, there exist a piecewise biharmonic function $u$ on $\SG$ and sequences $\{C_m'\}$ and $\{C_m\}$ such that $Ru = \{(a_m, \eta_m)\}$, $\triangle u = C_m'$ on $Y_m$, $\triangle u = C_m$ on $Z_m$, and the normal derivative matching conditions hold at $\{x_m\}$, $\{y_m\}$, and $\{z_m\}$. 
\end{lemma}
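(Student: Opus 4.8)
The plan is to construct the piecewise biharmonic function $u$ cell-by-cell, exploiting the fact that on each cell we have exactly enough free parameters to match both the prescribed function values and the prescribed normal derivatives on $X$, while simultaneously enforcing continuity and normal-derivative matching at the interior junction points $\{y_m\}$ and $\{z_m\}$. On each cell $\overline Y_m$ (a level-$m$ copy of $\SG$) I specify a constant-Laplacian function by prescribing its three boundary values at $\{x_m, y_{m-1}, y_m\}$ together with the constant $C_m'$; this is exactly the four-parameter family described in the text preceding the lemma. Analogously on each $\overline Z_m$ I prescribe the three boundary values at $\{x_m, z_{m-1}, z_m\}$ and the constant $C_m$. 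The boundary value at $x_m$ is forced to equal $a_m$ on both $\overline Y_m$ and $\overline Z_m$, which guarantees continuity across $X$ and gives the correct function value $u(x_m) = a_m$.

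First I would set up the unknowns: the values $u(y_m)$, $u(z_m)$ for all $m$, and the constants $C_m', C_m$. The constraints are, at each $x_m$, the normal-derivative condition $\rightarrow\!\partial_n u(x_m) = \eta_m$; at each $y_m$ and $z_m$, continuity (automatic, since the value $u(y_m)$ is shared by the two cells meeting there) and the normal-derivative matching condition $\nearrow\!\partial_n u + \swarrow\!\partial_n u = 0$. The key computational tool is Lemma \ref{biharmonic L0}, which gives each local normal derivative explicitly as
\[
\partial_n u(p_j) = \(\frac{5}{3}\)^m [\,2u(p_j) - u(p_{j+1}) - u(p_{j-1})\,] + \frac{C}{3^{m+1}},
\]
so every matching condition and every $\eta_m$-condition becomes a linear equation in the finitely many local unknowns attached to that junction. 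I would write these out and solve them recursively in $m$, peeling off one cell at a time starting from $Y_1$ (where the boundary value at $y_0 = q_1$ plays the role of a seed).

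The main point to verify is that the linear system is actually solvable, i.e. that at each stage there are enough degrees of freedom to hit the two targets (the value $a_m$ and the normal derivative $\eta_m$) while respecting the junction matching. Counting parameters: each cell contributes one unshared boundary value ($u(y_m)$ or $u(z_m)$) and one constant ($C_m'$ or $C_m$), and each new level $m$ introduces the two data $a_m, \eta_m$ plus the matching equations at $y_{m-1}, z_{m-1}$. I would show this bookkeeping balances so that, given the previously determined values, the constant $C_m'$ (resp. $C_m$) and the value $u(y_m)$ (resp. $u(z_m)$) are uniquely or at least consistently determined by the matching condition at $y_{m-1}$ together with the $\eta_m$ equation at $x_m$. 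Because the Laplacian constants $C_m', C_m$ enter the normal-derivative formula with the $3^{-(m+1)}$ weight, they provide an independent handle distinct from the harmonic part, which is what makes the values and derivatives simultaneously attainable.

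The hard part, which here is really just a setup rather than an obstacle, is organizing the recursion so that the matching at $y_m$ couples cell $Y_m$ to cell $Y_{m+1}$ consistently, and likewise for the $z$-side, without over-determining the system; this is precisely where the arbitrariness of $\{a_m\}, \{\eta_m\}$ (no summability or decay assumed) matters, since I must produce \emph{some} solution for every input sequence and therefore cannot rely on convergence of any series. Once the local equations are assembled via Lemma \ref{biharmonic L0}, solving them is a routine triangular recursion, and I would conclude by noting that the resulting $u$ is piecewise biharmonic with the asserted piecewise-constant Laplacians, satisfies $Ru = \{(a_m, \eta_m)\}$ by construction at the $x_m$, and satisfies the stated normal-derivative matching at all of $\{x_m\}, \{y_m\}, \{z_m\}$; global membership in $\dom\triangle$ is not claimed at this stage (that is exactly what the Gluing Theorem will later supply once the matching conditions are in hand).
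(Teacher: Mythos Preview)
Your recursion has a genuine gap: the $2\times 2$ linear system you propose at each step is singular. Using Lemma \ref{biharmonic L0} on $Y_m$ (vertices $x_m,y_{m-1},y_m$, constant $C_m'$), the two equations you want to impose at step $m$ are
\[
\rightarrow\!\partial_n u(x_m)=\Big(\frac{5}{3}\Big)^m[\,2a_m-u(y_{m-1})-u(y_m)\,]+\frac{C_m'}{3^{m+1}}=\eta_m
\]
and the matching at $y_{m-1}$, which on the $Y_m$ side reads
\[
\swarrow\!\partial_n u(y_{m-1})=\Big(\frac{5}{3}\Big)^m[\,2u(y_{m-1})-a_m-u(y_m)\,]+\frac{C_m'}{3^{m+1}}=-N_{m-1},
\]
where $N_{m-1}$ is the already-computed $\nearrow\!\partial_n u(y_{m-1})$ from $Y_{m-1}$. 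In both equations the unknowns $u(y_m)$ and $C_m'$ appear only through the \emph{same} combination $-(5/3)^m u(y_m)+C_m'/3^{m+1}$, so the coefficient matrix has rank one. Subtracting gives the compatibility condition
\[
\eta_m+N_{m-1}=3\Big(\frac{5}{3}\Big)^m\big(a_m-u(y_{m-1})\big),
\]
which constrains the \emph{previously chosen} value $u(y_{m-1})$ in terms of $a_m,\eta_m$. Hence the forward ``triangular'' recursion you describe cannot work: determining $u(y_{m-1})$ already requires the level-$m$ data $a_m,\eta_m$, and in fact the paper's explicit formula $u(y_m)=\tfrac{1}{8}(5a_{m+1}+3a_m)-\tfrac{1}{8}(3/5)^m(\eta_{m+1}+\eta_m)$ shows that $u(y_m)$ depends on $a_{m+1},\eta_{m+1}$.

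The paper sidesteps this entirely by writing $u=u_1+u_2$ with $u_1$ symmetric about $X$ (so $\partial_n u_1(x_m)=0$ automatically, and one only has to hit the values $a_m$) and $u_2$ skew-symmetric (so $u_2(x_m)=0$ automatically, and one only has to hit the derivatives $\eta_m$). This decoupling makes closed-form formulas for $u(y_m),u(z_m),C_m',C_m$ immediate, after which one simply verifies the matching via Lemma \ref{biharmonic L0}. Your approach can be rescued---the system is globally solvable once you allow $u(y_m)$ to depend on level-$(m{+}1)$ data---but it is not the routine triangular elimination you claim, and the symmetric/skew-symmetric splitting is what makes the construction clean.
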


\begin{proof}
We construct two functions $u_1$ and $u_2$ such that $u_1(x_m)=a_m$ but $\partial_n u_1(x_m)=0$, while $u_2(x_m)=0$ but $\partial_n u(x_m)=\eta_m$. Then the sum $u=u_1+u_2$ will satisfy $Ru = \{(a_m,\eta_m)\}$. Of course, we must do this carefully so that $u$ satisfies the other claimed properties. 

Consider the symmetric piecewise biharmonic function $u_1$ satisfying $u_1(x_m) = a_m$, $u_1(y_m)=u_1(z_m)= (1/8)(5a_{m+1}+3a_m)$,
and $\triangle u_1 = D_m'$ on $Y_m \cup Z_m$ with 
\[
D_m' = 5^m \(\frac{3}{8}\) (5a_{m+1}-8a_m+3a_{m-1}).
\]
This information determines $u_1$ on $Y_m \cup Z_m$ because as mentioned earlier, a constant Laplacian function is determined by its boundary values and the value of its Laplacian. Consequently, $u_1$ is determined everywhere because $\SG = \bigcup_m (Y_m \cup Z_m)$. Using (\ref{biharmonic eq2}) to compute the normal derivatives of $u_1$ at $x_m$, $y_m$ and $z_m$, it is straightforward to check that $\partial_n u_1(x_m) = 0$ and the normal derivative matching conditions hold.

Consider the skew-symmetric piecewise biharmonic function $u_2$ satisfying the conditions $u_2(x_m) = 0$, $u_2(y_m) = -(1/8)(3/5)^m(\eta_{m+1} + \eta_m)$, $u_2(z_m) = - u_2(y_m)$, $\triangle u_2 = -E_m$ on $Y_m$ and $\triangle u_2 = E_m$ on $Z_m$, where 
\[
E_m = 3^m \(\frac{1}{8}\) (3 \eta_{m+1} - 16 \eta_m + 5 \eta_{m-1}).
\]
Again, these constraints determine $u_2$ everywhere on $\SG$. Writing down the normal derivatives of $u_2$ at $x_m$, $y_m$ and $z_m$ using (\ref{biharmonic eq2}), we see that $\partial_n u_2(x_m) = \eta_m$ and the normal derivative matching conditions hold.

Then the function $u=u_1+u_2$ satisfies $u(x_m) = a_m$, $\partial_n u(x_m) = \eta_m$,  
\begin{align}
\begin{aligned}
\label{biharmonic eq3}
u(y_m) 
&= \frac{1}{8} (5a_{m+1} + 3a_m) - \frac{1}{8} \(\frac{3}{5}\)^m (\eta_{m+1} + \eta_m), \\
u(z_m) 
&= \frac{1}{8} (5a_{m+1} + 3a_m) + \frac{1}{8} \(\frac{3}{5}\)^m (\eta_{m+1} + \eta_m),
\end{aligned}
\end{align}
$\triangle u = C_m'$ on $Y_m$ and $\triangle u = C_m$ on $Z_m$ where
\begin{align}
\begin{aligned}
\label{biharmonic eq4}
C_m' 
&= 5^m \(\frac{3}{8}\) (5a_{m+1} - 8a_m + 3a_{m-1}) - 3^m \(\frac{1}{8}\) (3 \eta_{m+1} - 16 \eta_m + 5 \eta_{m-1}), \\
C_m 
&= 5^m \(\frac{3}{8}\) (5a_{m+1} - 8a_m + 3a_{m-1}) + 3^m \(\frac{1}{8}\) (3 \eta_{m+1} - 16 \eta_m + 5 \eta_{m-1}).
\end{aligned}
\end{align}
Because normal derivatives add linearly, $u$ satisfies the normal derivative matching conditions at $x_m$, $y_m$ and $z_m$.
\end{proof}

As a result of the above lemma, we can define the extension operator $E$ which maps two sequences $\{(a_m,\eta_m)\}$ to the function $u$ given in the lemma. This operator is well defined because the process described by the lemma generates exactly one function for each pair of sequences. Additionally, it is not difficult to see that $E$ is a linear operator. 

\begin{theorem}
\label{biharmonic T1}
There exist a bounded linear extension map $E \colon \T_\infty \to \DI$ and $E \colon \T_2 \to \DT$ with $R \circ E = \text{Id}$.
\end{theorem}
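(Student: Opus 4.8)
```latex
The plan is to verify that the extension operator $E$ constructed in Lemma \ref{biharmonic L1} is bounded with respect to the relevant norms, and that $R \circ E = \text{Id}$ holds. The identity $R \circ E = \text{Id}$ is essentially immediate from the construction: Lemma \ref{biharmonic L1} guarantees that $Eu$ produces a function with $u(x_m) = a_m$ and $\partial_n u(x_m) = \eta_m$, so $R(E\{(a_m,\eta_m)\}) = \{(a_m,\eta_m)\}$ by definition. The real content is the boundedness, which requires controlling $\|Eu\|_{\DI}$ in terms of $\|\{(a_m,\eta_m)\}\|_{\T_\infty}$ and $\|Eu\|_{\DT}$ in terms of $\|\{(a_m,\eta_m)\}\|_{\T_2}$.

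First I would handle the Laplacian norms, which are the more transparent part. From formula (\ref{biharmonic eq4}), on $Y_m \cup Z_m$ the Laplacian $\triangle u$ equals $C_m'$ or $C_m$, each a sum of a $5^m(5a_{m+1}-8a_m+3a_{m-1})$ term and a $3^m(3\eta_{m+1}-16\eta_m+5\eta_{m-1})$ term. For the $\DT$ case, since the standard measure assigns mass proportional to $3^{-m}$ to each cell $Y_m$ (there are $3^m$ cells of level $m$, but $Y_m$ is a single such cell up to the local self-similarity), I would compute $\|\triangle u\|_{L^2(\SG)}^2$ as a sum over $m$ of $(\text{cell measure}) \cdot |C_m'|^2$ and show the $a$-part contributes $\sum (3/25)^m \cdot 5^{2m}|5a_{m+1}-8a_m+3a_{m-1}|^2 \sim \sum (25/3)^m |\cdots|^2$ while the $\eta$-part contributes $\sim \sum 3^m |3\eta_{m+1}-16\eta_m+5\eta_{m-1}|^2$. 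These are precisely the quantities appearing in the $\T_2$ norm (after writing $a_m = A_1 + A_2(3/5)^m + a_m'$ and using that the second-difference operator annihilates the constant and geometric parts up to harmless boundary terms), so they are bounded by $\|\{(a_m,\eta_m)\}\|_{\T_2}^2$ via the series lemmas. For the $\DI$ case I would use the $L^\infty$ norm, bounding $|C_m'|$ directly using the $\|5^m a_m'\|_{\ell^\infty}$ and $\|3^m\eta_m\|_\Lip$ control.

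Next I would bound the function norms $\|Eu\|_{L^\infty(\SG)}$ and $\|Eu\|_{L^2(\SG)}$. Since $u$ is piecewise biharmonic and determined on each $Y_m \cup Z_m$ by its boundary values (\ref{biharmonic eq3}) and its constant Laplacian (\ref{biharmonic eq4}), the sup norm on each cell is controlled by the vertex values $a_m$, $u(y_m)$, $u(z_m)$ together with the oscillation coming from the constant Laplacian. I would show the vertex values stay bounded: $a_m$ is bounded since $a_m = A_1 + A_2(3/5)^m + a_m'$ with $a_m' \to 0$, and the terms $(3/5)^m(\eta_{m+1}+\eta_m)$ in (\ref{biharmonic eq3}) are bounded because $3^m\eta_m$ is controlled. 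The biharmonic correction on a cell of level $m$ with Laplacian $C_m'$ contributes an oscillation of order $C_m' \cdot 5^{-m}$ (the Green's function scaling), which I must verify decays or stays summable. The $L^2$ bound follows similarly, integrating against the cell measure.

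The main obstacle will be verifying global membership in $\dom\triangle$ rather than merely controlling norms piece by piece. The Gluing Theorem (Proposition, Gluing Theorem) requires continuity of both $u$ and $\triangle u$ across junction points and the normal derivative matching conditions; Lemma \ref{biharmonic L1} already secures the matching conditions at $\{x_m\}, \{y_m\}, \{z_m\}$, but I must also confirm that $u$ extends continuously to $q_0$ (the limit point of the cells $Y_m$) and that $\triangle u$, which equals the constants $C_m'$, $C_m$ on each cell, genuinely lies in $L^\infty$ or $L^2$ globally and defines a legitimate element of the domain there. The delicate point is behavior near the accumulation point $q_0$: I would need the decay estimates on $a_m'$ and $\eta_m$ to force $u(y_m), u(z_m)$ to a common limit so that $u$ is continuous at $q_0$, paralleling the role of condition (\ref{bvp cont}) in Lemma \ref{bvp L2}. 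Once continuity at $q_0$ and the $L^2$/$L^\infty$ membership of $\triangle u$ are established, the Gluing Theorem applied inductively (or in the limit) certifies $Eu \in \DI$ respectively $\DT$, completing the proof.
```
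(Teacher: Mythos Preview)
Your proposal is correct and follows essentially the same approach as the paper: verify continuity at $q_0$ using the decay of $a_m'$ and $\eta_m$, invoke the Gluing Theorem together with the matching conditions from Lemma \ref{biharmonic L1}, and then bound $\|\triangle u\|_{L^\infty}$ (respectively $\|\triangle u\|_{L^2}$) by estimating the constants $C_m, C_m'$ from (\ref{biharmonic eq4}) in terms of the $\T_\infty$ (respectively $\T_2$) norm. You are in fact slightly more careful than the paper in explicitly noting that the function norm $\|u\|_{L^\infty}$ or $\|u\|_{L^2}$ must also be controlled to obtain boundedness of $E$; the paper's proof records only the Laplacian bound and leaves the function bound implicit.
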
 

\begin{proof}
Suppose $\{(a_m,\eta_m)\} \in \T_\infty$ and let $u = E\{(a_m,\eta_m)\}$. In order to apply the Gluing Theorem, we need to check that $u$ is continuous. It suffices to check for continuity at $q_0$ because $u$ is clearly continuous everywhere else. In order to show that $u$ is continuous at $q_0$, we need to show that $\lim_{m \to \infty} u(x_m) = \lim_{m \to \infty} u(y_m) = \lim_{m \to \infty} u(z_m)$. Since $\{(a_m,\eta_m)\} \in \T_\infty$, we have $a_m = A_1 + A_2 (3/5)^m + a_m'$ with $\|5^m a_m'\|_{\ell^\infty} < \infty$ and $\|3^m \eta_m\|_\Lip < \infty$. Then (\ref{biharmonic eq3}) reads 
\begin{align*}
u(y_m) 
&= A_1 +  \frac{3}{4}\(\frac{3}{5}\)^m A_2 + \frac{1}{8} (5a_{m+1}' + 3a_m') - \frac{1}{8} \(\frac{3}{5}\)^m (\eta_{m+1} + \eta_m), \\
u(z_m) 
&= A_1 +  \frac{3}{4}\(\frac{3}{5}\)^m A_2 + \frac{1}{8} (5a_{m+1}' + 3a_m') + \frac{1}{8} \(\frac{3}{5}\)^m (\eta_{m+1} + \eta_m).
\end{align*}
Taking the limit $m \to \infty$ in the above equations, we see that $A_1 = \lim_{m \to \infty} u(y_m) = \lim_{m \to \infty} u(z_m) = \lim_{m \to \infty} a_m$, which verifies the continuity of $u$ at $q_0$. Recall that Lemma \ref{biharmonic L1} tells us that $u$ satisfies the normal derivative matching conditions at $\{x_m\}$, $\{y_m\}$ and $\{z_m\}$. Thus the hypotheses of the Gluing Theorem are satisfied, so the theorem implies that $\triangle u$ is well defined. We need to show that $\triangle u \in L^\infty(\SG)$. Observe that (\ref{biharmonic eq4}) reads
\begin{align*}
C_m' 
&= 5^m \(\frac{3}{8}\) (5a_{m+1}' - 8a_m' + 3a_{m-1}') - 3^m \(\frac{1}{8}\) (3 \eta_{m+1} - 16 \eta_m + 5 \eta_{m-1}), \\
C_m 
&= 5^m \(\frac{3}{8}\) (5a_{m+1}' - 8a_m' + 3a_{m-1}') + 3^m \(\frac{1}{8}\) (3 \eta_{m+1} - 16 \eta_m + 5 \eta_{m-1}).
\end{align*}
Using Lemma \ref{sequences L2} to obtain an upper bound on the normal derivative terms in $C_m$ and $C_m'$, we find that
\[
\|\triangle u\|_{L^\infty} 
\leq \|C_m\|_{\ell^\infty} + \|C_m'\|_{\ell^\infty}
\leq M_1 \|5^m a_m'\|_{\ell^\infty} + M_2\|3^m\eta_m\|_\Lip.
\]
Therefore, $E \colon \T_\infty \to \DI$. 

Suppose $\{(a_m,\eta_m)\} \in \T_2$ and let $u = E\{(a_m,\eta_m)\}$. Again, we need to check that $u$ is continuous at $q_0$ in order to apply the Gluing theorem. By definition of $\T_2$, we have $a_m = A_1 + A_2 (3/5)^m + a_m'$ with $\|(25/3)^{m/2} a_m'\|_{\ell^2} < \infty$ and $\|3^{m/2}\eta_m\|_{\ell^2} < \infty$. Then $|a_m'| \to 0$ and $|\eta_m| \to 0$. By the same argument for the $\T_\infty$ case, $u$ is continuous at $q_0$, hence continuous everywhere. By Lemma \ref{biharmonic L1}, $u$ satisfies the normal matching conditions at $\{x_m\}$, $\{y_m\}$ and $\{z_m\}$. Then $\triangle u$ is well defined by the Gluing Theorem. Finally, $\triangle u \in L^2(\SG)$ because 
\[
\|\triangle u\|_{L^2}^2
= \sum_{m=1}^\infty \frac{|C_m'|^2 + |C_m|^2}{3^m} 
\leq M_1 \sum_{m=1}^\infty \(\frac{25}{3}\)^m |a_m'|^2 + M_2 \sum_{m=1}^\infty 3^m |\eta_m|^2.
\]
Therefore, $E \colon \T_2 \to \DT$. 
\end{proof}

\subsection{Extensions of Solutions to Differential Equations on $\Omega$}

The material presented in this section is motivated by the classical theory of extending functions with $\triangle u \in L^p$ on a nice domain in Euclidean space $\R^n$ to functions with the same property on $\R^n$. We ask:
\begin{enumerate}
\item
Given $u \in \dom_{L^\infty} \triangle(\Omega)$, does there exist an extension $\u \in \DI$?
\item
Given $u \in \dom_{L^2} \triangle(\Omega)$, does there exist an extension $\u \in \DT$?
\end{enumerate}
We present two motivating examples before we proceed to the main extension results. 

\begin{theorem}
If $u$ is a harmonic function on $\Omega$, then either $u$ belongs to the two dimensional space of restrictions to $\Omega$ of even global harmonic functions on $\SG$, or the even extension of $u$ is not in $\dom\triangle$.
\end{theorem}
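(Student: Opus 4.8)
The plan is to prove the contrapositive of the nontrivial implication: assuming the even extension $\overline{u} \in \dom\triangle$, I will show that $\overline{u}$ is in fact globally harmonic on $\SG$, so that $u$ is the restriction of an even global harmonic function. First I would record the structure of $\overline{u}$. Since $u$ is harmonic on $\Omega$, it is harmonic on each cell $\overline Y_m$ and satisfies the normal-derivative matching condition at every interior junction $y_m$. Reflecting across $X$, the extension $\overline{u}$ is harmonic on each $\overline Z_m$ as well, and the matching condition holds automatically at each $z_m$ by symmetry. Thus $\overline{u}$ is a piecewise harmonic function, continuous off $q_0$, whose only possible defects as an element of $\dom\triangle$ are the matching condition at the points $x_m \in X$ and continuity at $q_0$.

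Next I would exploit the evenness at the points of $X$. If $\overline{u} \in \dom\triangle$, then by the matching condition for normal derivatives the two outward normal derivatives of $\overline{u}$ at $x_m$, taken with respect to $Y_m$ and $Z_m$, sum to zero. But $x_m$ is fixed by the reflection across $X$ and $\overline{u}$ is even, so these two outward normal derivatives are mirror images of one another and hence equal; their common value is $\eta_m = \rightarrow\partial_n u(x_m)$. Therefore $\eta_m = 0$ for every $m$. Together with the matching at every $y_m$ and $z_m$ already noted, the normal derivatives of $\overline{u}$ now match at all junctions, so the weak formulation of the Laplacian gives $\E(\overline{u}, v) = 0$ for all $v \in \dom_0\E$; equivalently, since $\overline{u}$ is harmonic on the interior of every cell and $\triangle\overline{u}$ is continuous, $\triangle\overline{u}$ vanishes on a dense subset of $\SG$ and hence everywhere. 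In either form, $\overline{u}$ is globally harmonic on $\SG$.

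Finally, a global harmonic function is determined by its three boundary values, and evenness (symmetry under the reflection interchanging $q_1$ and $q_2$) cuts this down to the two-dimensional space spanned by the constant function and the symmetric harmonic function; its restriction to $\Omega$ recovers $u$. This shows that if $\overline{u} \in \dom\triangle$ then $u$ lies in the two-dimensional space of restrictions of even global harmonic functions, which is exactly the asserted dichotomy. The step I expect to require the most care is the reflection argument at $x_m$: one must verify that the two local normal derivatives at a point of the symmetry axis are genuinely equal for an even function, so that the matching condition forces each to vanish rather than merely constraining their sum, and one must confirm that membership in $\dom\triangle$ really does supply the matching condition at every $x_m$, including adequate control near the accumulation point $q_0$ where infinitely many cells meet.
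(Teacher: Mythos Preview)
Your proposal reaches the right conclusion and in fact contains the paper's entire proof as your ``equivalently'' clause: since membership in $\dom\triangle$ means $\triangle\overline{u}$ is continuous, and $\triangle\overline{u}=0$ on the dense open set $\Omega$ together with its reflection, it follows that $\triangle\overline{u}\equiv 0$, so $\overline{u}$ is an even global harmonic function. That single sentence is the paper's complete argument.

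The route you lead with---verifying the normal-derivative matching at each $x_m$ via the reflection symmetry and then invoking a gluing/weak-formulation argument---is an unnecessary detour. The reflection computation forcing $\eta_m=0$ is correct, but it is not needed for the theorem, and the concern you flag at the end (justifying $\E(\overline{u},v)=0$ across infinitely many cells accumulating at $q_0$) is precisely the technicality that the paper's direct continuity argument avoids. So your primary route trades a one-line proof for one that requires genuine additional care, without any gain for this particular statement; the payoff of the normal-derivative viewpoint is only the more explicit diagnosis that the obstruction to $\overline{u}\in\dom\triangle$ is a nonzero $\eta_m$, which is interesting but not required here.
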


\begin{proof}
Let $\u$ denote the even extension of $u$. Then $\triangle \u=0$ on both $\Omega$ and its reflection. If $\u\in\dom\triangle$ then $\triangle\u$ must be a continuous function on $\SG$, hence identically zero, so $\u$ is an even global harmonic function. 
\end{proof}

\begin{theorem}
Suppose $u \in C(\overline \Omega)$ solves the BVP with $a_0 = C_1$ and $a_m = (2/3)(3/5)^m (C_1 + C_2)$ for some constants $C_1, C_2$. Then there exists a harmonic extension of $u$.  
\end{theorem}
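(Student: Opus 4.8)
The plan is to exhibit an explicit global harmonic function $\u$ on $\SG$ whose restriction to $\overline\Omega$ is \emph{forced} to coincide with $u$, and then to invoke the uniqueness already established for the BVP. The entire content is the structural observation that the prescribed data has exactly the shape produced by the restriction to $X$ of a harmonic function on all of $\SG$. Concretely, I would recall from Lemma \ref{trace L1} that a global harmonic function $h$ on $\SG$, written as a combination of the constant, symmetric, and skew-symmetric pieces with weights $A_1,A_2,A_3$, satisfies $h(x_m)=A_1+A_2(3/5)^m$ on $X$, the skew-symmetric part contributing nothing there. The given data $a_m=\tfrac{2}{3}(3/5)^m(C_1+C_2)$ is precisely of this form, with $A_1=0$ and $A_2=\tfrac{2}{3}(C_1+C_2)$; note that it decays to $0$, so Theorem \ref{bvp T2} applies and the $C(\overline\Omega)$ solution $u$ is unique. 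The crucial remark is that $A_1,A_2$ are pinned down by the values on $X$, while the skew-symmetric weight $A_3$ remains free, since it does not affect $X$; I would spend this freedom to match the value $a_0=C_1$ at $q_1$.

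Next I would solve the three linear conditions $A_1=0$, $A_2=\tfrac{2}{3}(C_1+C_2)$, and $h(q_1)=C_1$ for the boundary values of $h$ on $V_0$. This fixes $A_3=\tfrac{1}{3}(2C_2-C_1)$, and hence $h(q_0)=0$, $h(q_1)=C_1$, $h(q_2)=\tfrac{1}{3}C_1+\tfrac{4}{3}C_2$. Let $\u$ denote the unique global harmonic function on $\SG$ with these prescribed values on $V_0$. By Lemma \ref{trace L1} it then satisfies $\u(q_1)=a_0$ and $\u(x_m)=a_m$ for every $m$, so the full BVP data is reproduced by $\u$ on the boundary $q_1\cup X$.

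Finally I would observe that $\u|_{\overline\Omega}$ is harmonic on $\Omega$ (being the restriction of a globally harmonic function, hence satisfying $\triangle\u=0$ on $\Omega$), is continuous on $\overline\Omega$, and carries exactly the BVP data $a_0,\{a_m\}$. Since $a_m\to 0$, the uniqueness clause of Theorem \ref{bvp T2} forces $\u|_{\overline\Omega}=u$, so $\u$ is the required harmonic extension. I expect no serious analytic obstacle: the only real work is the algebraic recognition that the data lies in the three-parameter image of the restriction map on global harmonic functions, and the one subtlety worth flagging is that the skew-symmetric part decouples the value at $q_1$ from the values on $X$. This decoupling is exactly what lets the extension exist for arbitrary $C_1,C_2$, rather than only for even functions as in the preceding motivating example.
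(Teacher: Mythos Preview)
Your approach is essentially the paper's: construct an explicit global harmonic function $\u$ on $\SG$, check that it reproduces the BVP data on $q_1\cup X$, and invoke the uniqueness clause of Theorem \ref{bvp T2}. The paper does this in one line by setting $\u(q_0)=0$, $\u(q_1)=C_1$, $\u(q_2)=C_2$ and verifying via the harmonic extension algorithm that $\u(x_m)=\tfrac{2}{3}(3/5)^m(C_1+C_2)$.

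There is, however, an arithmetic slip in your boundary values. The symmetric harmonic function with boundary data $(0,1,1)$ satisfies $h_s(x_1)=\tfrac{4}{5}$ and in general $h_s(x_m)=\tfrac{4}{3}(3/5)^m$; the formula $h(x_m)=A_1+A_2(3/5)^m$ you quote is missing this factor $\tfrac{4}{3}$. With the corrected identity $h(x_m)=A_1+\tfrac{4}{3}A_2(3/5)^m$, matching $a_m$ forces $A_1=0$ and $A_2=\tfrac{1}{2}(C_1+C_2)$; then $h(q_1)=A_2-A_3=C_1$ gives $A_3=\tfrac{1}{2}(C_2-C_1)$ and hence $h(q_2)=A_2+A_3=C_2$, exactly the paper's choice. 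Your proposed value $h(q_2)=\tfrac{1}{3}C_1+\tfrac{4}{3}C_2$ already fails at $m=1$: it produces $h(x_1)=\tfrac{8}{15}(C_1+C_2)$ rather than $a_1=\tfrac{2}{5}(C_1+C_2)$. The strategy is sound; only this constant needs fixing.
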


\begin{proof}
Consider the harmonic function $\u$ on $\SG$ determined by the boundary values $\u(q_0) = 0$, $\u(q_1) = C_1$ and $\u(q_2) = C_2$. Simple computation shows that $\u(x_m) = (2/3)(3/5)^m (C_1 + C_2)$. Thus, $u = \u$ on $\overline \Omega$ and $\triangle \u = 0$, which shows that $\u$ is indeed a harmonic extension.
\end{proof}

In special cases, such as the one presented in the previous result, there exists a harmonic extension. In general, the desired extension will not be harmonic because the space of harmonic functions on $\SG$ is a three dimensional space so finding a harmonic extension $\u$ of $u$ satisfying the infinite number of conditions $\u(x_m) = a_m$ is unlikely. For that reason, we look for a piecewise biharmonic extension. In fact, this motivates our study of piecewise biharmonic functions to begin with. To prove the existence of an extension, we need the analogue of Lemma \ref{biharmonic L1}.

\begin{lemma} \label{extension L1}
Suppose $u \in \dom_{L^\infty} \triangle(\Omega)$ or $u \in \dom_{L^2} \triangle(\Omega)$. Then there exist a sequence $\{C_m\}$ and a piecewise biharmonic function $\u$ on $\SG$ satisfying $\u = u$ on $\overline \Omega$, $\triangle \u = C_m$ on $Z_m$, and the normal derivative matching conditions hold at $\{x_m\}$ and $\{z_m\}$. 
\end{lemma}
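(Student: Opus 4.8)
The plan is to build $\u$ by keeping $u$ on the left half and grafting onto it the right-half cells produced by Lemma \ref{biharmonic L1}. First I would extract the data to be matched. Because $u \in \dom_{L^\infty}\triangle(\Omega)$ (resp. $\dom_{L^2}\triangle(\Omega)$), its restriction to each cell $\overline Y_m$ lies in the domain of the Laplacian on that cell, so the Gauss--Green formula guarantees that the normal derivative $\rightarrow\partial_n u(x_m)$ at the boundary vertex $x_m$ exists. Setting $a_m=u(x_m)$ and $\eta_m=\ \rightarrow\partial_n u(x_m)$, the restriction data $Ru=\{(a_m,\eta_m)\}$ is well defined, and this is the only information about $u$ that the right-half construction will use.

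Next I would feed this data into Lemma \ref{biharmonic L1}, obtaining a piecewise biharmonic function $v$ on $\SG$ with $Rv=\{(a_m,\eta_m)\}$, with $\triangle v=C_m$ on each $Z_m$ for the constants $C_m$ of (\ref{biharmonic eq4}), and with the normal derivative matching conditions holding at $\{x_m\}$, $\{y_m\}$ and $\{z_m\}$. I would then define $\u:=u$ on $\overline\Omega$ and $\u:=v$ on the right half $\bigcup_m\overline Z_m$, resolving the lone overlap point $q_0$ by declaring $\u(q_0):=u(q_0)$; this is immaterial since $q_0$ is the boundary vertex of no $Z_m$ and has measure zero. The definition is consistent on the remaining overlap $X$ because $u(x_m)=a_m=v(x_m)$. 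By construction $\u=u$ on $\overline\Omega$, and $\triangle\u=\triangle v=C_m$ on each $Z_m$, so $\u$ has the asserted piecewise structure.

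It remains to check the two families of matching conditions, and here the only subtle point appears. At each $z_m$ the junction and both adjacent cells $Z_m,Z_{m+1}$ sit inside the region where $\u=v$, so the matching at $z_m$ is inherited verbatim from $v$. The crux is the seam $x_m$, where $\u$ equals $u$ on the $Y_m$ side but $v$ on the $Z_m$ side. There $\rightarrow\partial_n\u(x_m)=\ \rightarrow\partial_n u(x_m)=\eta_m$ by the definition of $\eta_m$, while $\leftarrow\partial_n\u(x_m)=\ \leftarrow\partial_n v(x_m)$; since $v$ satisfies the matching condition at $x_m$ with $\rightarrow\partial_n v(x_m)=\eta_m$, this last quantity equals $-\eta_m$, and the two add to zero. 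The point to stress is that although $u$ and $v$ generally disagree on the $Y$-cells (in particular $u(y_m)\neq v(y_m)$), the replacement of $v$'s left half by $u$ does not disturb the seam, precisely because both pieces carry the same prescribed normal derivative $\eta_m$ at $x_m$.

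I expect this seam verification, together with the bookkeeping of the $\rightarrow/\leftarrow$ orientations and the level-$m$ renormalization in Lemma \ref{biharmonic L0}, to be the main obstacle; everything else is formal. As a self-contained alternative to grafting, one can construct the right half directly: parametrize $\u$ on $Z_m$ by its three boundary values and the constant $C_m$, fix $\u(x_m)=a_m$, use Lemma \ref{biharmonic L0} to rewrite the matching at $x_m$ as a formula for $C_m$ in terms of $\u(z_{m-1}),\u(z_m)$, and substitute into the matching at $z_m$. A short computation shows the $\u(z_{m-1})$ and $\u(z_{m+1})$ contributions cancel, so the $z_m$-matching collapses to the closed form $\u(z_m)=\tfrac18(5a_{m+1}+3a_m)+\tfrac18(3/5)^m(\eta_{m+1}+\eta_m)$ of (\ref{biharmonic eq3}), after which $C_m$ is read off from (\ref{biharmonic eq4}); the value $\u(q_2)$ is left free, reflecting the absence of any matching condition at the boundary vertex $q_2$. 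Either route produces the desired $\u$ and $\{C_m\}$.
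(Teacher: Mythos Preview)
Your proposal is correct. The paper's proof takes your ``self-contained alternative'': it simply writes down the explicit formulas
\[
\u(z_m)=\tfrac18(5a_{m+1}+3a_m)+\tfrac18(3/5)^m(\eta_{m+1}+\eta_m),\qquad
C_m=\tfrac38\cdot 5^m(5a_{m+1}-8a_m+3a_{m-1})+\tfrac18\cdot 3^m(3\eta_{m+1}-16\eta_m+5\eta_{m-1}),
\]
and verifies the matching conditions at $x_m$ and $z_m$ by direct computation using Lemma~\ref{biharmonic L0}. Your primary route---grafting the right-half cells of the function $v$ produced by Lemma~\ref{biharmonic L1} onto $u$---is a legitimate shortcut: since Lemma~\ref{biharmonic L1} already established the matching at $\{x_m\}$ and $\{z_m\}$ for $v$, the only new work is the seam argument at $x_m$, which you handle correctly by noting that both $u$ and $v$ carry the same outward normal derivative $\eta_m$ from the $Y_m$ side. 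The grafting route is cleaner conceptually and avoids redoing calculations; the paper's direct route has the advantage of making the formulas (\ref{extension eq1}) and (\ref{extension eq2}) explicit from the outset, which are then quoted verbatim in the proofs of Theorems~\ref{extension T1} and~\ref{extension T2}. Your remark that $\u(q_2)$ is in principle free is accurate for the bare existence statement, though the paper commits to the value dictated by (\ref{extension eq1}) at $m=0$.
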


\begin{proof}
For convenience, we write $a_m=u(x_m)$ and $\eta_m = \partial_n u(x_m)$. Consider the function $\u = u$ on $\overline \Omega$, 
\begin{equation} 
\label{extension eq1}
\u(z_m) 
= \frac{1}{8} (5 a_{m+1} + 3a_m) + \frac{1}{8} \(\frac{3}{5}\)^m ( \eta_{m+1} + \eta_m),
\end{equation}
and $\triangle \u = C_m$ on $Z_m$ where
\begin{equation}
\label{extension eq2}
C_m 
= 5^m \(\frac{3}{8}\) (5 a_{m+1} - 8 a_m + 3 a_{m-1}) + 3^m \(\frac{1}{8}\) (3 \eta_{m+1} - 16 \eta_m + 5 \eta_{m-1}).
\end{equation}
For the same reason as before, these constraints completely determine $\u$ on $Z_m$. Hence we have defined a function $\u$ on $\SG$.

We claim that the normal matching conditions hold at $x_m$ and $z_m$. Using (\ref{biharmonic eq2}),
\begin{align*}
\leftarrow \partial_n \u(x_m) 
&= \(\frac{5}{3}\)^m \left[ 2\u(x_m) - \u(z_m) - \u(z_{m-1}) \right] + \frac{C_m}{3^{m+1}}, \\ 
\nwarrow \partial_n \u(z_m) 
&= \(\frac{5}{3}\)^m \left[ 2\u(z_m) - \u(z_{m-1}) - \u(x_m) \right] + \frac{C_m}{3^{m+1}}, \\
\searrow \partial_n \u(z_m) 
&= \(\frac{5}{3}\)^{m+1} \left[ 2\u(z_m) - \u(z_{m+1}) - \u(x_{m+1}) \right] + \frac{C_{m+1}}{3^{m+2}}.
\end{align*}
It is straightforward to check that our formulas for $\u(x_m)$, $\u(z_m)$, and $C_m$ imply the matching conditions hold at $\{x_m\}$ and $\{z_m\}$.
\end{proof}

The lemma allows us to define an extension operator. Let $E_\Omega$ be the extension operator that maps a function $u \in \dom_{L^\infty} \triangle(\Omega)$ or $u \in \dom_{L^2} \triangle(\Omega)$ to the function $E_\Omega u$ on $\SG$ as given in the lemma. This operator is well defined because for each $u$, there is exactly one $E_\Omega u$. It is clear that $E_\Omega$ is linear and that $E_\Omega u$ is continuous except possibly at $q_0$. 

\begin{theorem}
\label{extension T1}
Suppose $u \in \dom_{L^\infty}\triangle(\Omega)$. If $Ru \in \T_\infty$, then $E_\Omega u \in \DI$ and 
\[ 
\|\triangle (E_\Omega u)\|_{L^\infty(\SG)} 
\leq \|\triangle u\|_{L^\infty(\Omega)} + C\|Ru\|_{\T_\infty}. 
\]
The Trace Theorem implies the converse: if $E_\Omega u \in \DI$, then $Ru \in \T_\infty$.

\end{theorem}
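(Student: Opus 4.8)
The plan is to mirror the proof of Theorem~\ref{biharmonic T1}, adapting it to the fact that $E_\Omega$ alters only the reflected cells $Z_m$ while leaving $u$ untouched on $\overline\Omega$. Write $\u = E_\Omega u$, $a_m = u(x_m)$, and $\eta_m = \partial_n u(x_m)$. By Lemma~\ref{extension L1}, $\u = u$ on $\overline\Omega$, $\triangle\u = C_m$ on $Z_m$ with $C_m$ given by (\ref{extension eq2}), and the normal derivative matching conditions hold at $\{x_m\}$ and $\{z_m\}$. Two facts then come for free: since $\u = u$ on $\overline\Omega$, we have $\triangle\u = \triangle u$ on each $Y_m$, already bounded by $\|\triangle u\|_{L^\infty(\Omega)}$; and the matching conditions at the interior junctions $\{y_m\}$ are inherited from $u \in \dom_{L^\infty}\triangle(\Omega)$, since only the $\Omega$-side cells $Y_m, Y_{m+1}$ meet there. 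Thus the only two things left to establish are continuity of $\u$ at $q_0$ (so the Gluing Theorem may be applied across the accumulation point) and the estimate $\sup_m|C_m| \le C\|Ru\|_{\T_\infty}$.

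For continuity at $q_0$, I would use the $\T_\infty$ decomposition $a_m = A_1 + A_2(3/5)^m + a_m'$ with $\|5^m a_m'\|_{\ell^\infty}<\infty$, which gives $a_m \to A_1$, together with the observation that $\|3^m\eta_m\|_\Lip < \infty$ forces $\eta_m = O(m/3^m)$, hence $(3/5)^m(\eta_{m+1}+\eta_m)\to 0$. Feeding these into (\ref{extension eq1}) shows $\u(z_m)\to A_1$. Since $u$ is continuous on $\overline\Omega$ and both $x_m \to q_0$ and $y_m \to q_0$, the three limits $\lim\u(x_m)$, $\lim\u(y_m)$, $\lim\u(z_m)$ all equal $u(q_0)=A_1$, so $\u \in C(\SG)$. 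With continuity in hand, a bounded piecewise Laplacian, and matching at every junction $\{x_m\},\{y_m\},\{z_m\}$, the Gluing Theorem yields $\u \in \dom\triangle$ with $\triangle\u$ equal to the piecewise function just described.

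The computational heart is the bound on $C_m$, and the point is that the coefficients in (\ref{extension eq2}) are arranged so that $C_m$ involves only combinations the $\T_\infty$ seminorms control. For the $a$-part, the operator $5a_{m+1}-8a_m+3a_{m-1}$ factors as $(5r-3)(r-1)$ in the shift $r$, so it annihilates both the constant $A_1$ and the geometric $A_2(3/5)^m$, leaving $5a_{m+1}'-8a_m'+3a_{m-1}'$; multiplying by $5^m$ bounds this by $C\|5^m a_m'\|_{\ell^\infty}$. For the $\eta$-part, set $b_m = 3^m\eta_m$; then $3^m(3\eta_{m+1}-16\eta_m+5\eta_{m-1}) = b_{m+1}-16b_m+15b_{m-1} = (b_{m+1}-b_m)-15(b_m-b_{m-1})$, which by the definition of $\Lip$ (cf.\ Lemma~\ref{sequences L2}) is bounded by $16\|3^m\eta_m\|_\Lip$. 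Combining gives $\sup_m|C_m|\le C\|Ru\|_{\T_\infty}$, and since $\|\triangle\u\|_{L^\infty(\SG)} = \max(\|\triangle u\|_{L^\infty(\Omega)},\ \sup_m|C_m|)$, the claimed estimate follows. The converse is immediate: because $E_\Omega u = u$ on $\overline\Omega$, the values and the $Y_m$-side normal derivatives at each $x_m$ are unchanged, so $R(E_\Omega u)=Ru$; if $E_\Omega u\in\DI$, part~(1) of the Trace Theorem (Theorem~\ref{trace T1}) gives $Ru = R(E_\Omega u)\in\T_\infty$.

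The main obstacle I anticipate is purely in carrying out the two cancellations cleanly and attributing each surviving term to the correct $\T_\infty$ seminorm; everything else is structurally forced by Lemma~\ref{extension L1} and parallels Theorem~\ref{biharmonic T1}. A secondary point deserving a word of care is the legitimacy of invoking the Gluing Theorem, which is stated for a single triadic subdivision, across the countable family of cells accumulating at $q_0$: it is continuity at $q_0$ together with the uniform $L^\infty$ bound on the piecewise Laplacian that makes the global conclusion $\u\in\dom_{L^\infty}\triangle(\SG)$ legitimate.
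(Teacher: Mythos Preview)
Your proposal is correct and follows essentially the same approach as the paper: verify continuity at $q_0$ via the $\T_\infty$ decomposition of $a_m$, invoke Lemma~\ref{extension L1} and the Gluing Theorem, and bound $C_m$ by observing that $5a_{m+1}-8a_m+3a_{m-1}$ annihilates the $A_1$ and $A_2(3/5)^m$ parts while the $\eta$-combination is controlled by $\|3^m\eta_m\|_\Lip$ (the paper cites Lemma~\ref{sequences L2} where you unwind the computation $b_{m+1}-16b_m+15b_{m-1}$ directly). Your treatment is in fact slightly more careful than the paper's in noting that the matching conditions at $\{y_m\}$ are inherited from $u$, that $\eta_m = O(m/3^m)$ is what makes the $(3/5)^m(\eta_{m+1}+\eta_m)$ term vanish, and in spelling out the converse via $R(E_\Omega u)=Ru$.
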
 

\begin{proof}
Suppose $u \in \dom_{L^\infty} \triangle(\Omega)$ and $Ru = \{(a_m, \eta_m)\} \in \T_\infty$. By definition of $\T_\infty$, we have $a_m = A_1 + A_2 (3/5)^m + a_m'$ with $\|5^m a_m'\|_{\ell^\infty} < \infty$ and $\|3^m \eta_m\|_\Lip < \infty$. We need to check that $E_\Omega$ is continuous at $q_0$. Observe that (\ref{extension eq1}) becomes
\[
E_\Omega u(z_m) = A_1 + A_2 \(\frac{3}{5}\)^m + \frac{1}{8} (5a_{m+1}' + 3a_m') + \frac{1}{8} \(\frac{3}{5}\)^m ( \eta_{m+1} + \eta_m).
\]
Taking the limit in the above equation, we see that $A_1 = \lim_{m\to\infty} a_m = \lim_{m\to\infty} E_\Omega u(z_m)$. This proves that $E_\Omega u$ is continuous. By Lemma \ref{extension L1}, the matching conditions for $u$ at $\{x_m\}$ and $\{z_m\}$ are satisfied. This allows us to apply the Gluing Theorem, and so $\triangle (E_\Omega u)$ exists. 

To prove that $E_\Omega u \in \DI$, observe that 
\[
\|5^m(5a_{m+1}-8a_m+3a_{m-1})\|_{\ell^\infty} 
\leq 16 \|5^m a_m'\|_{\ell^\infty}
\]
and by Lemma \ref{sequences L2}, 
\[
\|3^m(3\eta_{m+1}-16\eta_m+5 \eta_{m-1})\|_{\ell^\infty}
\leq 16\|3^m \eta_m\|_\Lip. 
\]
Using the above inequalities and the equation for $C_m$ given by (\ref{extension eq2}), we find that
\[
\|\triangle (E_\Omega u)\|_{L^\infty(\Omega')} 
= \max_m |C_m| 
\leq M_1 \|5^m a_m'\|_{\ell^\infty} + M_2 \|3^m \eta_m\|_\Lip.
\]
Then by triangle inequality, 
\[
\|\triangle (E_\Omega u)\|_{L^\infty(\SG)}
\leq \|\triangle u\|_{L^\infty(\Omega)} + M_1\|5^m a_m'\|_{\ell^\infty} + M_2\|3^m \eta_m\|_\Lip,
\]
which completes the proof.
\end{proof}

\begin{theorem} 
\label{extension T2}
Suppose $u \in \dom_{L^2}\triangle(\Omega)$. If $Ru \in \T_2$, then $E_\Omega u \in \DT$ and 
\[
\|\triangle (E_\Omega u)\|_{L^2(\SG)}^2
\leq \|\triangle u\|_{L^2(\Omega)}^2 + C \|Ru\|_{\T_2}^2. 
\]
The Trace Theorem implies the converse: if $E_\Omega u \in \DT$, then $Ru \in \T_2$.
\end{theorem}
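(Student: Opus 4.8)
The plan is to mirror the proof of Theorem \ref{extension T1}, replacing every $\ell^\infty$/Lip estimate by its $\ell^2$ counterpart. The structural fact I would use is that, by Lemma \ref{extension L1}, $E_\Omega u = u$ on $\overline\Omega$ while $\triangle(E_\Omega u) = C_m$ is \emph{constant} on each reflected cell $Z_m$, with $C_m$ given by (\ref{extension eq2}). Since $\SG = \bigcup_m(Y_m \cup Z_m)$ is an almost disjoint union and each $Z_m$ has measure $3^{-m}$, exactly as in the norm computation of Theorem \ref{biharmonic T1} I can split
\[
\|\triangle(E_\Omega u)\|_{L^2(\SG)}^2 = \|\triangle u\|_{L^2(\Omega)}^2 + \sum_{m=1}^\infty \frac{|C_m|^2}{3^m}.
\]
Everything then reduces to bounding $\sum_m |C_m|^2/3^m$ by a constant times $\|Ru\|_{\T_2}^2$.

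For that estimate I would first record the elementary bound $|C_m|^2/3^m \leq C[(25/3)^m(5a_{m+1}-8a_m+3a_{m-1})^2 + 3^m(3\eta_{m+1}-16\eta_m+5\eta_{m-1})^2]$, which comes straight from (\ref{extension eq2}) together with $9^m/3^m = 3^m$. The two resulting sums are handled separately. Writing $a_m = A_1 + A_2(3/5)^m + a_m'$ as in the definition of $\T_2$, the crucial observation is that $1$ and $3/5$ are precisely the roots of $5r^2 - 8r + 3 = 0$, so the operator $a_m \mapsto 5a_{m+1}-8a_m+3a_{m-1}$ annihilates both $A_1$ and $A_2(3/5)^m$; hence this combination equals $5a_{m+1}'-8a_m'+3a_{m-1}'$. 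Setting $b_m = (25/3)^{m/2}a_m'$, a direct substitution gives $(25/3)^{m/2}(5a_{m+1}'-8a_m'+3a_{m-1}') = \sqrt3\,b_{m+1} - 8b_m + 5\sqrt3\,b_{m-1}$, and since shifts are isometries of $\ell^2$ the triangle inequality yields $\|(25/3)^{m/2}(5a_{m+1}-8a_m+3a_{m-1})\|_{\ell^2} \leq C\|(25/3)^{m/2}a_m'\|_{\ell^2}$. The same change of variables with $d_m = 3^{m/2}\eta_m$ turns the second combination into $\sqrt3\,d_{m+1} - 16d_m + 5\sqrt3\,d_{m-1}$, giving $\|3^{m/2}(3\eta_{m+1}-16\eta_m+5\eta_{m-1})\|_{\ell^2} \leq C\|3^{m/2}\eta_m\|_{\ell^2}$. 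Summing the two pieces produces $\sum_m |C_m|^2/3^m \leq C(\|(25/3)^{m/2}a_m'\|_{\ell^2}^2 + \|3^{m/2}\eta_m\|_{\ell^2}^2) \leq C\|Ru\|_{\T_2}^2$, which is the claimed inequality.

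It remains to check that $E_\Omega u$ genuinely lies in $\DT$, i.e. that it is continuous and that $\triangle(E_\Omega u)$ is a well-defined $L^2$ function. Continuity away from $q_0$ is immediate, and at $q_0$ I would argue as in Theorem \ref{extension T1}: the conditions $\|(25/3)^{m/2}a_m'\|_{\ell^2}<\infty$ and $\|3^{m/2}\eta_m\|_{\ell^2}<\infty$ force $a_m'\to0$ and $\eta_m\to0$, so (\ref{extension eq1}) gives $\lim_m E_\Omega u(z_m) = A_1 = \lim_m u(x_m)$, which (since $u$ is continuous up to $q_0$) also equals the limit of $u$ from the $\Omega$ side, establishing continuity at $q_0$. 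The matching conditions at $\{x_m\}$ and $\{z_m\}$ are supplied by Lemma \ref{extension L1}, while those at $\{y_m\}$ are inherited from $u \in \dom_{L^2}\triangle(\Omega)$; the Gluing Theorem then makes $\triangle(E_\Omega u)$ well defined, and the norm bound above shows it lies in $L^2(\SG)$, so $E_\Omega u \in \DT$. For the converse, if $E_\Omega u \in \DT$ then the Trace Theorem gives $R(E_\Omega u) \in \T_2$; but $E_\Omega u = u$ on $\overline\Omega$ and $\partial_n(E_\Omega u)(x_m) = \eta_m$ by construction, so $R(E_\Omega u) = Ru$ and therefore $Ru \in \T_2$.

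The main obstacle is the weighted $\ell^2$ boundedness of the two second-difference operators; this is exactly the place where, as noted before Lemma \ref{trace L3}, a naive Cauchy--Schwarz would be too lossy. The clean route is the change of variables to $b_m$ and $d_m$ together with the annihilation of the $A_1$ and $A_2(3/5)^m$ terms by the $a$-operator, after which the bound is just the shift-invariance of the $\ell^2$ norm. One minor technical care is needed at the bottom index (the terms carrying $a_0'$ or $\eta_0$ and the $m=1$ endpoint of the sums), but these contribute only finitely many harmless terms that are absorbed into the constant.
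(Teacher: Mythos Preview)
Your proposal is correct and follows essentially the same route as the paper: use Lemma~\ref{extension L1} for the construction and matching conditions, verify continuity at $q_0$ from $a_m'\to0$ and $\eta_m\to0$, invoke the Gluing Theorem, and bound $\sum_m 3^{-m}|C_m|^2$ by $\|Ru\|_{\T_2}^2$. Your explicit treatment of that last bound (annihilation of $A_1$ and $A_2(3/5)^m$ via the roots of $5r^2-8r+3$, then the change of variables $b_m=(25/3)^{m/2}a_m'$, $d_m=3^{m/2}\eta_m$ reducing everything to shift-boundedness on $\ell^2$) simply spells out what the paper asserts in a single displayed inequality.
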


\begin{proof}
Suppose $u \in \dom_{L^2}\triangle(\Omega)$ and $Ru = \{(a_m,\eta_m)\} \in \T_2$. By definition of $\T_2$, we know that $a_m = A_1 + A_2 (3/5)^m + a_m'$ with $\|(25/3)^{m/2} a_m'\|_{\ell^2} < \infty$ and $\|3^{m/2} \eta_m\|_{\ell^2} < \infty$. Then $|a_m'| \to 0$ and $|\eta_m| \to 0$. Using these limits, the same argument given in the proof of Theorem \ref{extension T1} shows that $E_\Omega u$ is continuous. Again, Lemma \ref{extension L1} guarantees the matching conditions for $u$ at $\{x_m\}$ and $\{z_m\}$ hold. The Gluing Theorem implies $\triangle (E_\Omega u)$ is well defined. 

To see why $E_\Omega u \in \DT$, we first see that 
\[
\|\triangle (E_\Omega u)\|_{L^2(\Omega')}^2 
= \sum_{m=1}^\infty \frac{1}{3^m}|C_m|^2
\leq M_1 \sum_{m=1}^\infty \(\frac{25}{3}\)^m |a_m'|^2 + M_2 \sum_{m=1}^\infty 3^m |\eta_m|^2.
\] 
Since $\|\triangle (E_\Omega u)\|_{L^2(\SG)}^2 = \|\triangle u\|_{L^2(\Omega)}^2 + \|\triangle (E_\Omega u)\|_{L^2(\Omega')}^2$, using the above inequality gives us
\[
\|\triangle (E_\Omega u)\|_{L^2(\SG)}^2 
\leq \|\triangle u\|_{L^2(\Omega)}^2 + M_1 \sum_{m=1}^\infty \(\frac{25}{3}\)^m |a_m'|^2 + M_2 \sum_{m=1}^\infty 3^m |\eta_m|^2.
\]
\end{proof}

We can interpret Theorem \ref{extension T1} and Theorem \ref{extension T2} by the following: $Ru \in \T_\infty$ is the minimal condition for extending an arbitrary function in $\dom_{L^\infty} \triangle(\Omega)$ to a function in $\DI$ and $Ru \in \T_2$ is the minimal condition for extending an arbitrary function in $\dom_{L^2} \triangle(\Omega)$ to a function in $\DT$. 

A function belonging to $\dom_{L^2}\triangle(\Omega)$ or $\dom_{L^\infty} \triangle(\Omega)$ is naturally a solution to the differential equation $\triangle u = f$ for $f \in L^2$ or $f \in L^\infty$ respectively. Solutions to this differential equation can be found using Theorem \ref{green T1}. 

As a special case of $E_\Omega$, we can extend harmonic functions $u$ on $\Omega$ provided that $Ru \in \T_2$ or $Ru \in \T_\infty$. Recall that the solution to this differential equation was explicitly given in Section \ref{solution}. The formula for the extended function will be given by (\ref{extension eq1}) and (\ref{extension eq2}), which can be simplified by using the normal derivative formula for harmonic functions (\ref{normal eq2}) and the recurrence relation (\ref{bvp recurrence}). 

\section{Appendix} \label{appendix}

\subsection{Green's Function Formulas}
\label{green}

For a given $m$ and a point $x \in V_m \setminus V_0$, let $\psi_x^m(y)$ denote the piecewise harmonic spline of level $m$ satisfying $\psi_x^m(y) = \delta_x(y)$ for $y \in V_m$ and extended harmonically for levels $m' > m$. Notice that $\psi_x^m \in \dom_0 \E$ because $x \not \in V_0$. 

\begin{proposition} [Green's Formula]
\label{green P1}
On $\SG$, the Dirichlet problem $- \triangle u = f$ on $\SG \setminus V_0$ and $u = 0$ on $V_0$ has a unique solution in $\dom \triangle$ for any continuous $f$, given by $u(x) = \int_{\SG} G(x, y) f(y) \ dy$ for the Green's function $G(x, y) = \lim_{M \to \infty} G_M(x, y)$ (uniform limit) where 
\[ 
G_M(x, y) 
= \sum_{k=1}^M \sum_{s, s' \in V_k \setminus V_{k-1}} \hspace{-0.5em} g(s, s') \psi_s^k\(x\) \psi_{s'}^k(y)
\]
and
\[
g(s, s') =
\begin{cases}
\ \vspace{0.5em} \frac{3}{10} \(\frac{3}{5}\)^k &\text{for } s = s' \in V_{k} \setminus V_{k-1}, \\
\ \frac{1}{10} \(\frac{3}{5}\)^k &\text{for } s, s' \in F_w K, \ |w| = k-1 \text{ and } s \not = s'.
\end{cases}
\]
\end{proposition}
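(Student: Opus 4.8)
The plan is to separate the two assertions, since the first is soft and the second is the real content. Existence and uniqueness in $\dom\triangle$ I would obtain abstractly from the theory of the energy form. The pair $(\E,\dom_0\E)$ is a closed nonnegative symmetric form on $L^2(\SG,dy)$, and since $\SG$ is compact the inclusion $\dom_0\E\hookrightarrow L^2(\SG)$ is compact; the associated self-adjoint operator is exactly $-\triangle$ with Dirichlet conditions on $V_0$, so it has discrete spectrum. The Dirichlet--Poincar\'e inequality on $\dom_0\E$ forces its least eigenvalue to be strictly positive, so $-\triangle$ is boundedly invertible, and for continuous $f$ the function $u=(-\triangle)^{-1}f$ lies in $\dom\triangle$ and solves the problem. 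Uniqueness is immediate: the difference of two solutions is harmonic and vanishes on $V_0$, hence vanishes identically.

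For the explicit formula I would first recast the claim in the language of the energy form. Because point evaluations are bounded on $\dom_0\E$ (energy dominates oscillation on $\SG$), the solution operator is given by the energy-reproducing kernel, characterized by $G(\cdot,y)\in\dom_0\E$ and $\E(G(\cdot,y),v)=v(y)$ for all $v\in\dom_0\E$; granting this, $\E(u,v)=\int_\SG v(y)f(y)\,dy$ and hence $-\triangle u=f$. So it suffices to show that the kernel defined by the series is this reproducing kernel. Convergence is easy: one has $0\le\psi_s^k\le1$, for fixed $x$ only the finitely many splines of the cells through $x$ are nonzero at level $k$, and $g(s,s')=O((3/5)^k)$, so the level-$k$ block of $G_M$ is $O((3/5)^k)$ uniformly; the series converges uniformly and $G$ is continuous, symmetric, and bounded. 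The boundary condition is automatic, since every $\psi_s^k$ with $s\in V_k\setminus V_{k-1}$ vanishes on $V_0$.

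The heart of the matter is the reproducing identity, and here the structure is cleaner than it first appears. The decisive fact is a cross-level orthogonality of the splines: if $k\neq n$ then $\E(\psi_s^k,\psi_t^n)=0$, because the finer spline vanishes on the coarser vertex set while the coarser spline is harmonic on the cells where the finer one is supported, and harmonic functions are energy-orthogonal to elements of $\dom_0\E$ that vanish on the cell boundary. For $k=n$ the Gram matrix is block diagonal over the level-$(n-1)$ cells, each block equal to $(5/3)^n(5I-J)$, where $J$ is the $3\times3$ all-ones matrix; this block is the renormalized level-one graph Laplacian on the three midpoints of a cell with zero data on its boundary. One then reads off that the block of $g$ at level $k$ is exactly $(3/5)^k(5I-J)^{-1}$, since $(5I-J)^{-1}=\tfrac1{10}(2I+J)$ has diagonal $\tfrac{3}{10}$ and off-diagonal $\tfrac{1}{10}$, matching the stated entries. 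Consequently, in $\E(G(\cdot,y),\psi_t^n)=\sum_{k}\sum_{s,s'}g(s,s')\psi_{s'}^k(y)\,\E(\psi_s^k,\psi_t^n)$ only the $k=n$ terms survive, the factors $(3/5)^n$ and $(5/3)^n$ cancel, the product $(5I-J)^{-1}(5I-J)=I$ appears, and the sum collapses to $\psi_t^n(y)$. Density of the splines in $\dom_0\E$ and continuity of both sides in the energy norm then extend the identity to all $v\in\dom_0\E$.

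I expect the main obstacle to be the analytic bookkeeping that makes this argument rigorous rather than formal: the energy $\E$ is itself a limit of graph energies and $G$ is a limit of the truncations $G_M$, so the identity $\E(G(\cdot,y),v)=v(y)$ is really a statement about interchanging two limits and an integral. I would handle this by proving the identity exactly at finite truncation---for each $n$ the relation $\E(G_M(\cdot,y),\psi_t^n)=\psi_t^n(y)$ holds for all $M\ge n$, because the level-$n$ block is already fully present in $G_M$---and only then passing to the limit, using the uniform convergence of $G_M$ and the continuity of point evaluation to pass the limit through the integral defining $u$. The one genuinely nonelementary input is that point evaluation is bounded on $\dom_0\E$, so that the reproducing kernel exists and $G(\cdot,y)$ really lies in $\dom_0\E$; this is the resistance-metric estimate on $\SG$, which I would cite rather than reprove. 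Everything else reduces to the single $3\times3$ inversion $(5I-J)^{-1}=\tfrac1{10}(2I+J)$ and the self-similar scaling of the energy form that produces the factor $(3/5)^k$.
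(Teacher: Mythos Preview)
The paper does not prove this statement. As the introduction explains, results labeled \textbf{Proposition} are known facts quoted from the textbooks \cite{kigami2} and \cite{strichartz}; the Green's formula is one of these, stated without proof and used as a tool in Sections~\ref{trace} and~\ref{green}. So there is no paper proof to compare against.

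Your sketch is correct and is essentially the standard argument from those references. The three ingredients you isolate are exactly the ones that drive the textbook proof: the cross-level energy orthogonality $\E(\psi_s^k,\psi_t^n)=0$ for $k\neq n$ (which, as you say, comes from the mean-value property of the coarser spline at the finer vertex), the block structure of the level-$k$ Gram matrix as $(5/3)^k(5I-J)$ over each level-$(k-1)$ cell, and the inversion $(5I-J)^{-1}=\tfrac{1}{10}(2I+J)$ that produces the diagonal entry $\tfrac{3}{10}$ and off-diagonal entry $\tfrac{1}{10}$ in $g(s,s')$. These combine to give $\E(G_M(\cdot,y),\psi_t^n)=\psi_t^n(y)$ for $M\ge n$, and density of the piecewise harmonic splines in $\dom_0\E$ finishes the reproducing-kernel identity. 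Your remarks about the limit interchange and the role of the resistance estimate (boundedness of point evaluation on $\dom_0\E$) are also the right points of care. In short, you have reconstructed the proof the paper is citing.
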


From the Green's formula, we have the following simple observation. 

\begin{theorem}
\label{green T1}
Let $G(x,y)$ denote the Green's function on $\SG$. Let $G_\Omega(x,y) = G(x,y)-G(x,Ry)$ for $x,y\in\Omega$ where $R$ denotes the reflection. Then $G_\Omega$ is the Green's function for $\Omega$, namely
\[
u(x) = \int_\Omega G_\Omega(x,y)f(y)\ dy 
\]
solves $-\triangle u = f$ on $\Omega$ subject to $\left. u \right|_\Omega = 0$. 
\end{theorem}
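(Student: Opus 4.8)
The plan is to verify directly that the proposed function $G_\Omega(x,y) = G(x,y) - G(x,Ry)$ reproduces the solution operator for the Dirichlet problem on $\Omega$, exploiting the reflection symmetry of $\SG$ across the line containing $X$. The key structural fact is that the reflection $R$ is an isometry of $\SG$ preserving the standard measure and the energy form, so the Green's function inherits the symmetry $G(Rx,Ry) = G(x,y)$. First I would record this symmetry of $G$, which follows because $G$ is built from the symmetric self-similar data (the measure $dx$ and the energy $\E$) that are invariant under $R$; more concretely, the reflection permutes the vertices and cells used in the spline expansion $G_M$ in Proposition \ref{green P1}, so each finite approximant satisfies $G_M(Rx,Ry) = G_M(x,y)$ and the identity passes to the uniform limit.

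Next I would show the integral operator with kernel $G_\Omega$ produces a function vanishing on $\partial\Omega$. The essential point is that $\partial\Omega \supset X$ consists precisely of the fixed-point set of $R$, together with the boundary vertices. For $x \in X$ we have $Rx = x$, hence $G_\Omega(x,y) = G(x,y) - G(x,Ry)$; pairing this against $f$ and changing variables $y \mapsto Ry$ in the second integral (legitimate since $R$ preserves $dx$ and $\Omega$ maps to its reflection $\Omega'$), the two contributions combine to force the value to zero when $x$ is fixed by $R$. I would make this precise by writing $u(x) = \int_\Omega [G(x,y) - G(x,Ry)] f(y)\,dy$ and observing that at a fixed point $x = Rx$ the antisymmetry of the kernel in its arguments under reflection yields $u(x) = 0$. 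The vanishing at $q_1$ follows from the original boundary condition $G(x,y)=0$ on $V_0$ built into Proposition \ref{green P1}.

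Then I would check that $-\triangle u = f$ on $\Omega$. Since $\triangle$ acts in the $x$ variable and $G(\cdot,y)$ solves $-\triangle_x G(x,y) = \delta_y$ in the weak Green's-function sense (Proposition \ref{green P1}), the first term contributes $f(x)$ for $x \in \Omega$. For the second term $G(x,Ry)$, as $y$ ranges over $\Omega$ the point $Ry$ ranges over the reflected half $\Omega'$, which is disjoint from $\Omega$ except along $X$; hence the source $\delta_{Ry}$ never lies in the open set $\Omega$, so $-\triangle_x \int_\Omega G(x,Ry) f(y)\,dy = 0$ for $x \in \Omega$. Combining, $-\triangle u = f$ on $\Omega$ as required. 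Uniqueness follows from the maximum principle, exactly as in the uniqueness arguments used for the BVP.

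The main obstacle I anticipate is justifying the interchange of $\triangle$ with the integral and the meaning of ``source'' for a Green's function on a fractal, where $\triangle$ is the Kigami Laplacian against $dx$ rather than a classical differential operator; the cleanest route is to argue weakly. That is, rather than differentiating the kernel, I would verify the defining weak identity $\E_\Omega(u,v) = \int_\Omega f v\,dx$ for all $v \in \dom_0\E(\Omega)$ by extending $v$ oddly across $X$ to a function $\tilde v \in \dom_0\E(\SG)$, applying the known Green's identity on $\SG$, and using the reflection symmetry of $G$ together with the odd symmetry of $\tilde v$ to collapse the two terms of $G_\Omega$ into the single $\Omega$-integral. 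This reduces everything to the established Proposition \ref{green P1} and the invariance of $\E$ and $dx$ under $R$, avoiding any pointwise manipulation of the singular kernel.
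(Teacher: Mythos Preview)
The paper does not actually prove this theorem; it is stated without proof as a ``simple observation'' immediately following Proposition~\ref{green P1}. Your method-of-images argument is correct and supplies the details the paper omits.

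One small comment: your first pass at the vanishing on $X$ via the change of variables $y\mapsto Ry$ is slightly muddled, since that substitution sends the domain of integration $\Omega$ to $\Omega'$ rather than back to itself, so the two integrals do not cancel by that mechanism alone. The cleaner route---which you essentially reach in the next sentence---is pointwise: for $x\in X$ the symmetry $G(Rx,Ry)=G(x,y)$ together with $Rx=x$ gives $G(x,Ry)=G(Rx,Ry)=G(x,y)$, so the kernel $G_\Omega(x,\cdot)$ vanishes identically and hence $u(x)=0$. Your weak-formulation strategy via odd extension of the test function is the right way to make the $-\triangle u=f$ step rigorous in the Kigami setting and is exactly in the spirit of the paper's treatment of symmetric and skew-symmetric parts elsewhere.
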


To simplify notation, we drop the superscript $^m$ on functions of the form $\psi_{x_m}^m$, $\psi_{y_m}^m$, and $\psi_{z_m}^m$ because unless otherwise notated, the superscript index matches the subscript index. It follows immediately from the definition that 
\begin{equation} \label{green eq1}
\int_\SG |\psi_{x_m}| \ dy 
= \int_{\SG} |\psi_{y_m}| \ dy 
= \int_{\SG} |\psi_{z_m}| \ dy 
= \frac{2}{3^{m+1}}.
\end{equation}
Additionally, since $|\psi_{x_m}|^2 \leq |\psi_{x_m}|$, we have
\begin{equation} \label{green eq2}
\int_{\SG} |\psi_{x_m}|^2 \ dy 
= \int_{\SG} |\psi_{y_m}|^2 \ dy 
= \int_{\SG} |\psi_{z_m}|^2 \ dy 
\leq \frac{2}{3^{m+1}}.
\end{equation}
To further simply notation, define the function 
\begin{equation} \label{green eq3}
\Psi_m(a, b, c)(y) = a \psi_{x_m}(y) + b \psi_{y_m}(y) + c \psi_{z_m}(y).
\end{equation} Using (\ref{green eq1}) and (\ref{green eq2}), we have the estimates
\begin{equation} \label{green eq4}
\int_\SG |\Psi_m (a, b, c)| \ dy 
\leq \frac{C_1}{3^m}
\indent \text{and} \indent
\int_\SG |\Psi_m (a, b, c)|^2 \ dy 
\leq \frac{C_2}{3^m},
\end{equation}
for constants $C_1$ and $C_2$ depending only on $a, b, c$.

\begin{lemma} \label{green L1}
The Green's function evaluated at $x_m$ is
\begin{equation} \label{green eq5}
G(x_m, y) 
= \frac{2}{15} \(\frac{3}{5}\)^m \sum_{k=1}^m \Psi_k(1, 2, 2)(y) + \frac{1}{6} \(\frac{3}{5}\)^m \Psi_m(1, -1, -1)(y).
\end{equation}
\end{lemma}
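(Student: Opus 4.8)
The plan is to substitute $x=x_m$ directly into the defining double series for $G_M$ from Proposition \ref{green P1} and track exactly which splines $\psi_s^k(x_m)$ survive. The geometric facts driving everything are that $x_m=F_0^{m-1}F_2q_1$ is born at level $m$, lies on the symmetry axis $X$, and sits inside the nested cells $F_0^{k-1}(\SG)$ for every $k\le m$, whose three new vertices at level $k$ are precisely $x_k,y_k,z_k$. First I would dispose of the tail: for $k>m$ every new vertex $s\in V_k\setminus V_{k-1}$ differs from $x_m\in V_{k-1}$, so $\psi_s^k(x_m)=0$ and the series truncates at $k=m$.

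For the top level $k=m$, since $x_m\in V_m\setminus V_{m-1}$ we have $\psi_s^m(x_m)=\delta_{s,x_m}$, so only $s=x_m$ contributes; the weights $g(x_m,s')$ are nonzero only for $s'$ in the same level-$(m-1)$ cell $F_0^{m-1}(\SG)$, namely $s'\in\{x_m,y_m,z_m\}$, yielding $\frac{3}{10}(3/5)^m\psi_{x_m}+\frac{1}{10}(3/5)^m(\psi_{y_m}+\psi_{z_m})$. A short coefficient check (the coefficients of $\psi_{x_m},\psi_{y_m},\psi_{z_m}$ come out to $\tfrac{3}{10},\tfrac1{10},\tfrac1{10}$) shows this equals $\frac{2}{15}(3/5)^m\Psi_m(1,2,2)+\frac16(3/5)^m\Psi_m(1,-1,-1)$, which accounts for the second term of (\ref{green eq5}) together with the $k=m$ summand of the first.

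For $k<m$, the point $x_m$ lies in the interior of the unique level-$k$ cell $F_0^k(\SG)$ (vertices $q_0,y_k,z_k$), so $\psi_s^k(x_m)$ vanishes unless $s\in\{y_k,z_k\}$; because $x_m$ is fixed by the reflection interchanging $y_k$ and $z_k$, the two values coincide, say $v_k:=\psi_{y_k}^k(x_m)=\psi_{z_k}^k(x_m)$. Summing the contributions $g(y_k,\cdot)$ and $g(z_k,\cdot)$ (again only points of the cell $F_0^{k-1}(\SG)$ enter) collapses the level-$k$ block to
\[
v_k\cdot\tfrac15\big(\tfrac35\big)^k\,\Psi_k(1,2,2)(y).
\]

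The crux is evaluating $v_k$, and this is the step I expect to be the main obstacle. Here I would invoke self-similarity: the restriction of $\psi_{y_k}^k$ to $F_0^k(\SG)$ is the harmonic function with boundary data $(0,1,0)$ at $(q_0,y_k,z_k)$, and $x_m=F_0^k(x_{m-k})$, so $v_k$ equals the value at $x_{m-k}$ of the harmonic function $H$ on $\SG$ with $(H(q_0),H(q_1),H(q_2))=(0,1,0)$. The explicit computation already recorded in the excerpt (the extension of the harmonic function with prescribed values at $V_0$) gives $H(x_j)=\frac23(3/5)^j$, whence $v_k=\frac23(3/5)^{m-k}$. Substituting, each level-$k$ block becomes $\frac{2}{15}(3/5)^m\Psi_k(1,2,2)(y)$, and summing over $k=1,\dots,m-1$ and folding in the $k=m$ piece produces exactly (\ref{green eq5}). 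Apart from the identification of $v_k$ and the careful bookkeeping of which splines and weights are nonzero, everything reduces to routine coefficient arithmetic.
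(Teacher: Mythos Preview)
Your proposal is correct and follows essentially the same approach as the paper: both truncate the double series at $k\le m$, isolate the $k=m$ contribution from $s=x_m$ alone, reduce the $k<m$ contributions to the two splines at $y_k,z_k$, and compute $\psi_{y_k}(x_m)=\psi_{z_k}(x_m)=\tfrac23(3/5)^{m-k}$ via the harmonic extension algorithm. Your explicit self-similarity identification $v_k=H(x_{m-k})$ and your split of the $k=m$ block into the $\Psi_m(1,2,2)$ summand plus the $\Psi_m(1,-1,-1)$ remainder are slightly more detailed than the paper's presentation, but the argument is the same.
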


\begin{proof}
Note the following observations:
\begin{enumerate}
\item
If $k > m$, then $\psi_s^k(x_m) = 0$. 
\item
If $k=m$, then $\psi_{x_m}(x_m) = 1$. If $k=m$ and $s \not = x_m$, then $\psi_s^m (x_m) = 0$.
\item 
If $k < m$ with $s \not = y_k$ and $s \not = z_k$, then $\psi_s^k(x_m) = 0$. 
\end{enumerate}
Using these facts, we have
\begin{align*}
G(x_m,y) 
&=\sum_{k=1}^{m-1}\sum_{s'\in V_k\setminus V_{k-1}}[g(y_k,s')\psi_{y_k}(x_m) + g(z_k,s')\psi_{z_k}(x_m)]\psi_{s'}^k(y)\\
&\tab+\sum_{s'\in V_m\setminus V_{m-1}} g(x_m,s')\psi_{s'}^m(y).
\end{align*}
Using the harmonic extension algorithm, for $k < m$, we have
\[
\psi_{y_k}(x_m) = \frac{2}{3} \(\frac{3}{5}\)^{m-k}
\tab \text{and} \tab
\psi_{z_k}(x_m) = \frac{2}{3} \(\frac{3}{5}\)^{m-k}.
\]
Since $g(s, s') = 0$ if $s$ and $s'$ are in different cells of level $k-1$, we deduce that
\begin{align*}
\sum_{s' \in V_k \setminus V_{k-1}} \left[ g(y_k, s') + g(z_k, s') \right] \psi_{s'}^k(y)
&= \frac{1}{5} \(\frac{3}{5}\)^k \Psi_k(1, 2, 2)(y), \\
\sum_{s' \in V_m \setminus V_{m-1}} g(x_m, s') \psi_{s'}^m (y)
&= \frac{1}{10} \(\frac{3}{5}\)^m \Psi_m(3, 1, 1)(y).
\end{align*}
Substituting these equations into the most recent equation for $G(x_m, y)$ completes the proof. 
\end{proof}

\begin{lemma} \label{green L2}
The Green's function evaluated at $z_m$ is
\begin{equation} \label{green eq6}
G(z_m, y) 
= \frac{1}{10} \(\frac{3}{5}\)^m \sum_{k=1}^m \Psi_k(1, 2, 2)(y) + \frac{1}{10} \(\frac{1}{5^m}\) \sum_{k=1}^m 3^k \Psi_k(0, -1, 1)(y).
\end{equation}
\end{lemma}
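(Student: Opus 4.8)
The plan is to mirror the computation of Lemma \ref{green L1}, beginning from the defining series $G(z_m,y) = \sum_{k=1}^\infty \sum_{s,s' \in V_k \setminus V_{k-1}} g(s,s')\psi_s^k(z_m)\psi_{s'}^k(y)$ and determining which splines $\psi_s^k$ fail to vanish at $z_m$. Since $z_m \in V_m$, the same reasoning as in observations (1)--(3) of Lemma \ref{green L1} applies: $\psi_s^k(z_m)=0$ whenever $k>m$, and for each $k \leq m$ the point $z_m = F_0^m q_2$ lies in the top cell $F_0^k\SG$, whose only vertices in $V_k\setminus V_{k-1}$ are $y_k$ and $z_k$. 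A spline $\psi_s^k$ is nonzero on a cell only when $s$ is a vertex of that cell, so only $s \in \{y_k,z_k\}$ survive (at $k=m$ this records $\psi_{y_m}(z_m)=0$ and $\psi_{z_m}(z_m)=1$), and the double sum collapses to $\sum_{k=1}^m\sum_{s'}[g(y_k,s')\psi_{y_k}(z_m) + g(z_k,s')\psi_{z_k}(z_m)]\psi_{s'}^k(y)$.

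The one genuinely new ingredient, and the step I expect to be the crux, is evaluating $\psi_{y_k}(z_m)$ and $\psi_{z_k}(z_m)$. Unlike the symmetric point $x_m$ treated in Lemma \ref{green L1}, the point $z_m$ sits off the symmetry line, so these two values will differ, and that asymmetry is exactly what produces the second sum in (\ref{green eq6}). Restricting $\psi_{y_k}$ (respectively $\psi_{z_k}$) to $F_0^k\SG$, it becomes, after pulling back by $F_0^k$, the harmonic function $h$ on $\SG$ with boundary data $(0,1,0)$ (respectively $(0,0,1)$) at $(q_0,q_1,q_2)$, and $z_m = F_0^k(F_0^{m-k}q_2)$. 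Writing $u_j, v_j$ for the values of $h$ at $F_0^jq_1, F_0^jq_2$, the harmonic extension algorithm yields the recursion $u_{j+1}=(2u_j+v_j)/5$, $v_{j+1}=(u_j+2v_j)/5$; tracking $u_j\pm v_j$ decouples it and gives $\psi_{y_k}(z_m) = \tfrac12[(3/5)^{m-k}-(1/5)^{m-k}]$ and $\psi_{z_k}(z_m) = \tfrac12[(3/5)^{m-k}+(1/5)^{m-k}]$. The $(1/5)^{m-k}$ terms, absent in the symmetric computation for $x_m$, are the source of the skew-symmetric $\Psi_k(0,-1,1)$ contribution.

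With these values in hand I would conclude exactly as in Lemma \ref{green L1}. Since $g(s,s')$ vanishes unless $s,s'$ lie in a common cell of level $k-1$, the only $s'$ surviving are $x_k, y_k, z_k$ (the three midpoints of $F_0^{k-1}\SG$), and collecting coefficients using $g(s,s)=\tfrac{3}{10}(3/5)^k$ and $g(s,s')=\tfrac{1}{10}(3/5)^k$ rewrites the level-$k$ term as $\tfrac{1}{10}(3/5)^k\,\Psi_k(\alpha+\beta,\,3\alpha+\beta,\,\alpha+3\beta)$ with $\alpha=\psi_{y_k}(z_m)$, $\beta=\psi_{z_k}(z_m)$. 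Substituting the two spline values and splitting by linearity of $\Psi_k$ decomposes this into $\tfrac{1}{10}(3/5)^m\Psi_k(1,2,2) + \tfrac{1}{10}\,5^{-m}3^k\,\Psi_k(0,-1,1)$, where the only bookkeeping needed is $(3/5)^k(1/5)^{m-k}=3^k/5^m$. Summing over $k=1,\dots,m$ then gives (\ref{green eq6}).
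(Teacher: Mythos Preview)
Your proposal is correct and follows essentially the same approach as the paper: collapse the Green's function series using the same vanishing observations, compute $\psi_{y_k}(z_m)$ and $\psi_{z_k}(z_m)$ via the harmonic extension algorithm to get $\tfrac12[(3/5)^{m-k}\mp(1/5)^{m-k}]$, and then assemble the $\Psi_k$ terms using the values of $g(s,s')$. The only cosmetic difference is that the paper separates out the $k=m$ term and records the three inner sums $\sum_{s'}g(y_k,s')\psi_{s'}^k$, $\sum_{s'}g(z_k,s')\psi_{s'}^k$, $\sum_{s'}g(z_m,s')\psi_{s'}^m$ individually as $\tfrac{1}{10}(3/5)^k\Psi_k(1,3,1)$, $\tfrac{1}{10}(3/5)^k\Psi_k(1,1,3)$, $\tfrac{1}{10}(3/5)^m\Psi_m(1,1,3)$ before combining, whereas you fold $k=m$ into the general case and pass directly to the parametrized form $\tfrac{1}{10}(3/5)^k\Psi_k(\alpha+\beta,3\alpha+\beta,\alpha+3\beta)$.
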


\begin{proof}
We use a similar process to find the formula for $G(z_m, y)$. Note the following observations:
\begin{enumerate}
\item
If $k > m$, then $\psi_s^k(z_m) = 0$. 
\item
If $k = m$, then $\psi_{z_m}(z_m) = 1$. If $k = m$ and $s \not = z_m$, then $\psi_s^m(z_m) = 0$.
\item
If $k < m$ with $s \not = y_k$ and $s \not = z_k$, then $\psi_s^k(z_m) = 0$. 
\end{enumerate}
Using these facts, we have
\begin{align*}
G(z_m, y) 
&=\sum_{k=1}^{m-1} \sum_{s' \in V_k \setminus V_{k-1}}[g(y_k,s')\psi_{y_k}(z_m)+g(z_k,s')\psi_{z_k}(z_m)]\psi_{s'}^k(y)\\
&\tab +\sum_{s'\in V_m\setminus V_{m-1}} g(z_m, s')\psi_{s'}^m(y).
\end{align*}
Using the harmonic algorithm, for $k < m$, we have  
\begin{align*}
\psi_{y_k}(z_m) 
&= \frac{1}{2} \(\frac{3}{5}\)^{m-k} - \frac{1}{2} \(\frac{1}{5}\)^{m-k}
\psi_{z_k}(z_m) \\  
&= \frac{1}{2} \(\frac{3}{5}\)^{m-k} + \frac{1}{2} \(\frac{1}{5}\)^{m-k}.
\end{align*}
Since $g(s, s') = 0$ if $s$ and $s'$ are in different cells of level $k - 1$, we deduce that
\begin{align*}
\sum_{s' \in V_k \setminus V_{k-1}} g(y_k, s') \psi_{s'}^k (y)
&= \frac{1}{10} \(\frac{3}{5}\)^k \Psi_k(1, 3, 1)(y), \\
\sum_{s' \in V_k \setminus V_{k-1}} g(z_k, s') \psi_{s'}^k (y)
&= \frac{1}{10} \(\frac{3}{5}\)^k \Psi_k(1, 1, 3)(y), \\
\sum_{s' \in V_m \setminus V_{m-1}} g(z_m, s') \psi_{s'}^m (y)
&= \frac{1}{10} \(\frac{3}{5}\)^m \Psi_m(1, 1, 3)(y).
\end{align*}
Making these substitutions into the previous equation for $G(z_m, y)$ completes the proof.
\end{proof}

\begin{lemma} \label{green L3}
If  $u = 0$ on $V_0$ and $\triangle u$ exists on $\SG$, then
\begin{equation} \label{green eq7}
\begin{aligned}
\partial_n u(x_m)
&=\frac{3}{5} \(\frac{1}{3^m}\) \sum_{k=1}^m 3^k \int_\SG \Psi_k(0, -1, 1) \triangle u \ dy \\
&\tab - \frac{1}{2} \int_\SG \Psi_m(1, -1, 1) \triangle u \ dy - \varphi_m,
\end{aligned}
\end{equation}
where $\varphi_m = \int_{Z_m} \psi_{x_m} \triangle u \ dy$.
\end{lemma}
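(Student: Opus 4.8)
The plan is to avoid differentiating the Green's function directly---which is delicate near the source---and instead reduce the normal derivative $\partial_n u(x_m)=\ \rightarrow\partial_n u(x_m)$ to three point values of $u$ plus a single local integral, and only then insert the Green representations of those point values. First I would apply the symmetric Gauss--Green formula (\ref{gauss-green}) on the single cell $Z_m$ (with boundary $\{x_m,z_{m-1},z_m\}$) using the test function $v=\psi_{x_m}$, which is harmonic on $Z_m$ with $v(x_m)=1$ and $v(z_{m-1})=v(z_m)=0$. Its outward (from $Z_m$) normal derivatives are $2(5/3)^m$ at $x_m$ and $-(5/3)^m$ at $z_{m-1},z_m$, and $\triangle v=0$ on $Z_m$, so since $\int_{Z_m}\triangle u\,\psi_{x_m}\,dy=\varphi_m$ the formula collapses to
\[
\varphi_m=\ \leftarrow\partial_n u(x_m)-2\(\tfrac{5}{3}\)^m u(x_m)+\(\tfrac{5}{3}\)^m\big(u(z_{m-1})+u(z_m)\big).
\]
Invoking the matching condition $\rightarrow\partial_n u(x_m)=-\!\leftarrow\partial_n u(x_m)$ (valid because $\triangle u$ exists) then gives the exact identity
\[
\partial_n u(x_m)=\(\tfrac{5}{3}\)^m\big(u(z_{m-1})+u(z_m)-2u(x_m)\big)-\varphi_m,
\]
which is simply the non-constant-Laplacian analogue of Lemma~\ref{biharmonic L0}.

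Next I would insert the Green representation $u(x)=\int_\SG G(x,y)\triangle u(y)\,dy$, valid since $u=0$ on $V_0$ (cf.\ (\ref{trace eq1})), for each of $u(x_m),u(z_m),u(z_{m-1})$, substituting the explicit formulas (\ref{green eq5}) for $G(x_m,\cdot)$ and (\ref{green eq6}) for $G(z_m,\cdot)$ and $G(z_{m-1},\cdot)$. The entire lemma then reduces to the single spline identity
\[
\(\tfrac{5}{3}\)^m\big[G(z_{m-1},y)+G(z_m,y)-2G(x_m,y)\big]=\tfrac{3}{5}\tfrac{1}{3^m}\sum_{k=1}^m 3^k\Psi_k(0,-1,1)(y)-\tfrac{1}{2}\Psi_m(1,-1,1)(y),
\]
which, once established, yields (\ref{green eq7}) at once after moving the integral inside.

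The bulk of the work---and the main obstacle---is verifying this last spline identity by bookkeeping the coefficients of the various $\Psi_k$ across the three expansions. I expect three things to happen, which I would check in turn: (i) the $\Psi_k(1,2,2)$ blocks cancel for all $k\le m-1$, since the coefficient $\tfrac{1}{10}(3/5)^m+\tfrac{1}{10}(3/5)^{m-1}-\tfrac{4}{15}(3/5)^m$ vanishes; (ii) the surviving $\Psi_k(0,-1,1)$ terms carry coefficient $\tfrac{3}{5}\tfrac{1}{5^m}$ for $k\le m-1$ but $\tfrac{1}{10}\tfrac{1}{5^m}$ for $k=m$, so rewriting as a full sum up to $m$ and correcting the top term produces the stated $\sum_{k=1}^m$ term plus a leftover $-\tfrac{1}{2}\tfrac{1}{5^m}3^m\Psi_m(0,-1,1)$; and (iii) the leftover level-$m$ splines combine, using linearity of $\Psi_m$ in its three arguments and
\[
-\tfrac{1}{2}(0,-1,1)-\tfrac{1}{6}(1,2,2)-\tfrac{1}{3}(1,-1,-1)=-\tfrac{1}{2}(1,-1,1),
\]
to give exactly $-\tfrac{1}{2}\Psi_m(1,-1,1)$. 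The only conceptual care needed is the sign/direction convention at the junction point $x_m$ and the admissibility of $v=\psi_{x_m}$ as a test function in (\ref{gauss-green}) on $Z_m$; both are routine once the matching condition for $u\in\dom\triangle$ is in hand.
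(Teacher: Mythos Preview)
Your proposal is correct and follows essentially the same route as the paper: apply the symmetric Gauss--Green formula on $Z_m$ with test function $v=\psi_{x_m}$ to express $\leftarrow\partial_n u(x_m)$ in terms of $\varphi_m$ and the three point values $u(x_m),u(z_{m-1}),u(z_m)$, invoke the matching condition at $x_m$, and then substitute the Green representations (\ref{green eq5}) and (\ref{green eq6}). The paper's proof is terse and simply says ``using the Green's formula, the formulas for $G(x_m,y)$ and $G(z_m,y)$ \dots\ yields the desired formula''; your write-up supplies the explicit spline bookkeeping (the cancellation of the $\Psi_k(1,2,2)$ blocks for $k\le m-1$ and the level-$m$ recombination into $-\tfrac12\Psi_m(1,-1,1)$) that the paper leaves to the reader.
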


\begin{proof}
Let $v$ be the harmonic function on $Z_m$ determined by the boundary values $v(x_m) = 1$ and $v(z_{m-1}) = v(z_m) = 0$. Note that $v = \psi_{x_m}$ on $Z_m$. Since $Z_m$ is a cell of level $m$ and $v$ is harmonic, using (\ref{normal eq1}) with the proper normalization constant, we have $\leftarrow \partial_n v(x_m) = 2 (5/3)^m$ and $\searrow \partial_n v(z_{m-1}) = \ \nwarrow \partial_n v(z_m) = -(5/3)^m$. These equations, together with the symmetric Gauss-Green formula (\ref{gauss-green}) applied to the functions $u$ and $v$, yield
\[
\leftarrow \partial_n u(x_m) 
= \int_{Z_m} \psi_{x_m} \triangle u \ dy + \(\frac{5}{3}\)^m \left[ 2u(x_m) - u(z_m) - u(z_{m-1}) \right].
\]
Using the Green's formula, the formulas for $G(x_m, y)$ and $G(z_m, y)$ given by (\ref{green eq5}) and (\ref{green eq6}) respectively, and the normal derivative matching condition at $x_m$ yields the desired formula. 
\end{proof}

\subsection{Lemmas for Sequences}
\label{sequences}

\begin{lemma} \label{sequences L1}
Given a sequence $\{a_m\}$, $\|5^m(5a_{m+1}-3a_m)\|_{\ell^\infty} < \infty$ if and only if $a_m = A (3/5)^m + a_m'$ with $\|5^m a_m'\|_{\ell^\infty} < \infty$. Furthermore,
\[
\|5^m a_m'\|_{\ell^\infty} \leq \|5^m(5a_{m+1}-3a_m)\|_{\ell^\infty}.
\]
Note that the equation for $a_m$ and the bound for $a_m'$ implies $A = \lim_{m\to\infty} (5/3)^m a_m$.
\end{lemma}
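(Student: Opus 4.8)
The plan is to exploit the fact that the first-order difference operator $a_m \mapsto 5a_{m+1} - 3a_m$ annihilates precisely the geometric sequence $(3/5)^m$: indeed $5(3/5)^{m+1} - 3(3/5)^m = (3/5)^m(3-3) = 0$. Thus the sequences $A(3/5)^m$ constitute exactly the kernel of this operator, and the content of the lemma is that a right-hand side that is summable in the weighted sup-norm forces $a_m$ to equal such a kernel element up to a correction decaying like $5^{-m}$.

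For the easy (converse) direction I would simply substitute $a_m = A(3/5)^m + a_m'$ into $5a_{m+1} - 3a_m$. The $A$-term cancels by the kernel computation above, leaving $5a_{m+1} - 3a_m = 5a_{m+1}' - 3a_m'$. Multiplying by $5^m$ and applying the triangle inequality gives $|5^m(5a_{m+1}-3a_m)| \le |5^{m+1}a_{m+1}'| + 3|5^m a_m'| \le 4\|5^m a_m'\|_{\ell^\infty}$, so finiteness of $\|5^m a_m'\|_{\ell^\infty}$ yields finiteness of $\|5^m(5a_{m+1}-3a_m)\|_{\ell^\infty}$.

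For the forward direction, write $c_m = 5a_{m+1}-3a_m$ and $L = \|5^m c_m\|_{\ell^\infty}$, so that $|c_m| \le L\,5^{-m}$. The key change of variables is to examine $b_m := (5/3)^m a_m$. A direct computation shows that the consecutive differences telescope, $b_{m+1} - b_m = \tfrac{1}{3}(5/3)^m c_m$, and hence $|b_{m+1}-b_m| \le \tfrac{1}{3}(5/3)^m L\,5^{-m} = L\,3^{-(m+1)}$. Since this is a convergent geometric bound, $\{b_m\}$ is Cauchy and converges to a limit $A$; summing the telescoping tail gives $A - b_m = \sum_{k=m}^\infty (b_{k+1}-b_k) = \tfrac{1}{3}\sum_{k=m}^\infty (5/3)^k c_k$. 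Setting $a_m' = a_m - A(3/5)^m = -(3/5)^m(A - b_m)$ and inserting $|c_k| \le L\,5^{-k}$, the resulting geometric series sums to give $|5^m a_m'| \le L$ (in fact the sharper constant $L/2$ drops out), which is exactly the claimed inequality.

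The only genuine step is the forward direction, and within it the sole point of insight is choosing the auxiliary sequence $b_m = (5/3)^m a_m$, which converts the weighted recurrence into a telescoping sum with geometric decay; after that everything reduces to routine geometric-series estimates with no subtlety. The final remark is then immediate: from $a_m = A(3/5)^m + a_m'$ and $|a_m'| \le L\,5^{-m}$ one gets $(5/3)^m a_m = A + (5/3)^m a_m'$ with $|(5/3)^m a_m'| \le L\,3^{-m} \to 0$, so $A = \lim_{m\to\infty}(5/3)^m a_m$.
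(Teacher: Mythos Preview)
Your proof is correct and follows essentially the same approach as the paper: both introduce the auxiliary sequence $(5/3)^m a_m$ (your $b_m$, the paper's $d_m$), observe that its successive differences are bounded by a geometric series in $3^{-m}$, conclude it is Cauchy with limit $A$, and then bound the tail by summing the telescoping series. Your write-up is slightly more explicit in the converse direction and in extracting the sharper constant $L/2$, but the argument is the same.
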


\begin{proof}
Clearly the second statement implies the first statement. Conversely, making the substitution $d_m = (5/3)^m a_m$, we find that
\[
3\|3^m (d_{m+1} - d_m)\|_{\ell^\infty} 
= \|5^m(5a_{m+1}-3a_m)\|_{\ell^\infty} 
< \infty.
\]
This inequality implies that $\{d_m\}$ is a Cauchy sequence and by completeness of the reals, $d_m \to D$ for some $D$. Then $a_m = (3/5)^m D + (3/5)^m (d_m-D)$. Writing $d_m$ as a telescoping series
\[
d_m = D + \sum_{k=m}^\infty (d_k-d_{k+1})
\]
and using the inequality $\|3^m (d_{m+1}-d_m)\|_{\ell^\infty} < \infty$, we obtain
\[
|d_m - D| 
\leq \sum_{k=m}^\infty |d_k-d_{k+1}| 
\leq \frac{1}{3^m} \|5^m(5a_{m+1}-3a_m)\|_{\ell^\infty}.
\]
Then defining $a_m' = (3/5)^m(d_m-D)$, we see that
\[
\|5^m a_m'\|_{\ell^\infty}  
= \|3^m(d_m-D)\|_{\ell^\infty}
\leq \|5^m(5a_{m+1}-3a_m)\|_{\ell^\infty}.
\]
\end{proof}

\begin{lemma} \label{sequences L2}
Given a sequence $\{\eta_m\}$, $\|3^m (3\eta_{m+1}-\eta_m)\|_{\ell^\infty} < \infty$ if and only if $\|3^m \eta_m\|_\Lip < \infty$. In fact, 
\[
\|3^m (3\eta_{m+1}-\eta_m)\|_{\ell^\infty} 
= \|3^m\eta_m\|_\Lip.
\] 
\end{lemma}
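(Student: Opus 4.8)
The plan is to recognize that the weighted second-order expression $3^m(3\eta_{m+1}-\eta_m)$ is nothing but the consecutive difference of the single sequence $c_m := 3^m\eta_m$. Once this is observed, the claimed identity reduces to the elementary fact that the Lipschitz seminorm of a sequence coincides with the supremum of its consecutive differences, so both the equivalence and the exact equality follow with essentially no computation.

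First I would set $c_m = 3^m\eta_m$ and record the direct computation
\[
c_{m+1}-c_m = 3^{m+1}\eta_{m+1}-3^m\eta_m = 3^m(3\eta_{m+1}-\eta_m),
\]
so that $|c_{m+1}-c_m| = 3^m\,|3\eta_{m+1}-\eta_m|$ holds for every $m$. This is the only place where the specific coefficients $3$ and the weight $3^m$ enter, and it is exactly the identification that makes the lemma transparent.

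Next I would unwind the definition of $\|\cdot\|_{\Lip}$. By definition $\|c_m\|_{\Lip} = \inf\{M : |c_{m+1}-c_m|\le M \text{ for all } m\}$, and the least such admissible $M$ is precisely $\sup_m|c_{m+1}-c_m|$. Feeding in the previous display then gives
\[
\|3^m\eta_m\|_{\Lip} = \sup_m|c_{m+1}-c_m| = \sup_m 3^m\,|3\eta_{m+1}-\eta_m| = \|3^m(3\eta_{m+1}-\eta_m)\|_{\ell^\infty},
\]
which is the asserted equality; in particular one side is finite if and only if the other is, yielding the stated biconditional as an immediate consequence.

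The only step meriting a line of justification is the identity $\inf\{M : |c_{m+1}-c_m|\le M\ \forall m\} = \sup_m|c_{m+1}-c_m|$: any admissible $M$ must dominate each $|c_{m+1}-c_m|$ and hence their supremum, while $M = \sup_m|c_{m+1}-c_m|$ is itself admissible (with the convention that the value $+\infty$ is allowed when the supremum is infinite). I do not anticipate any genuine obstacle here—the lemma is in essence a change of variables converting the weighted first-order difference into the defining difference of the Lipschitz seminorm—so the proof should be short and self-contained.
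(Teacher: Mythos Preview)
Your proposal is correct and follows essentially the same approach as the paper: both arguments reduce to the observation that $3^{m+1}\eta_{m+1}-3^m\eta_m = 3^m(3\eta_{m+1}-\eta_m)$, so the Lipschitz seminorm of $c_m = 3^m\eta_m$ equals $\sup_m 3^m|3\eta_{m+1}-\eta_m|$. Your write-up is slightly more explicit about the identity $\|c_m\|_{\Lip} = \sup_m|c_{m+1}-c_m|$, but the underlying idea is identical.
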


\begin{proof}
If $\|3^m (3\eta_{m+1}-\eta_m)\|_{\ell^\infty} < \infty$, then 
\[
\|3^m \eta_m\|_\Lip 
= \sup_m 3^m |3\eta_{m+1}-\eta_m| 
= \|3^m (3\eta_{m+1}-\eta_m)\|_{\ell^\infty}
< \infty.
\]
Conversely, if $\|3^m \eta_m\|_\Lip < \infty$, then
\[
3^m |3 \eta_{m+1}-3\eta| 
= \|3^{m+1}\eta_{m+1}-3^m \eta_m\| 
\leq \|3^m \eta_m\|_\Lip
< \infty.
\]
\end{proof}

\subsection{Lemmas for Series}
\label{series}

\begin{lemma} \label{series L1}
Fix a constant $r<1$ and a sequence $\{a_m\}$. Then $\|r^{m/2}a_m\|_{\ell^2}<\infty$ if and only if $\|r^{m/2}(a_{m+1}-a_m)\|_{\ell^2}<\infty$. More specifically, 
\[
\|r^{m/2}a_m\|_{\ell^2} 
\leq C_1|a_1|^2 + C_2 \|r^{m/2}(a_{m+1}-a_m)\|_{\ell^2}.
\]
\end{lemma}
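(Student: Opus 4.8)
The plan is to reduce everything to a weighted convolution estimate. Write $s=\sqrt r\in(0,1)$ and set $b_m=s^m a_m$ and $e_k=s^k(a_{k+1}-a_k)$, so that the hypothesis reads $\{e_k\}\in\ell^2$ and the conclusion is $\{b_m\}\in\ell^2$ together with a norm bound. (The displayed inequality as stated is not homogeneous under $a_m\mapsto\lambda a_m$, so I read the intended statement as $\|r^{m/2}a_m\|_{\ell^2}^2\le C_1|a_1|^2+C_2\|r^{m/2}(a_{m+1}-a_m)\|_{\ell^2}^2$; this follows from the unsquared bound $\|b_m\|_{\ell^2}\le C_1'|a_1|+C_2'\|e_k\|_{\ell^2}$ via $(x+y)^2\le 2x^2+2y^2$, so I aim for the latter.) The forward direction is immediate: since $\|s^m a_{m+1}\|_{\ell^2}=s^{-1}\|s^{m+1}a_{m+1}\|_{\ell^2}\le s^{-1}\|b_m\|_{\ell^2}$, the triangle inequality gives $\|e_k\|_{\ell^2}\le(1+s^{-1})\|b_m\|_{\ell^2}$.

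For the substantive direction, first I would telescope, $a_m=a_1+\sum_{k=1}^{m-1}(a_{k+1}-a_k)$. Multiplying by $s^m$ and writing $a_{k+1}-a_k=s^{-k}e_k$ yields
\[
b_m=s^m a_1+\sum_{k=1}^{m-1}s^{m-k}e_k .
\]
The second term is the discrete convolution of $\{e_k\}$ with the one-sided geometric kernel $g_j=s^j$ ($j\ge 1$), whose $\ell^1$ mass is $\sum_{j\ge1}s^j=s/(1-s)<\infty$. This is exactly the setting in which Young's convolution inequality yields an $\ell^2\to\ell^2$ bound, and that is the conceptual heart of the lemma.

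Rather than invoke Young's inequality, I would prove the needed estimate directly by a \emph{balanced} Cauchy--Schwarz, which is the one place care is required. Splitting $s^{m-k}=s^{(m-k)/2}\cdot s^{(m-k)/2}$ and applying Cauchy--Schwarz gives
\[
\Big|\sum_{k=1}^{m-1}s^{m-k}e_k\Big|^2\le\Big(\sum_{k=1}^{m-1}s^{m-k}\Big)\Big(\sum_{k=1}^{m-1}s^{m-k}e_k^2\Big)\le\frac{s}{1-s}\sum_{k=1}^{m-1}s^{m-k}e_k^2 .
\]
Summing over $m$ and interchanging the order of summation (each inner sum over $m>k$ is again geometric with total $s/(1-s)$) produces $\big(s/(1-s)\big)^2\sum_k e_k^2$, so the convolution term has $\ell^2$ norm at most $\tfrac{s}{1-s}\|e_k\|_{\ell^2}$. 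Finally, since $\|s^m a_1\|_{\ell^2}=|a_1|\,s/\sqrt{1-s^2}$, the triangle inequality gives $\|b_m\|_{\ell^2}\le C_1'|a_1|+C_2'\|e_k\|_{\ell^2}$ with $C_1'=s/\sqrt{1-s^2}$ and $C_2'=s/(1-s)$; translating back to $a_m$ and squaring yields the claimed inequality.

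The main obstacle—really the only subtlety—is the choice of weights in the Cauchy--Schwarz step. The naive asymmetric splitting $e_k=(s^k e_k)(s^{-k})$ leads, after interchanging sums, to $\sum_k e_k^2\sum_{m>k}1=\infty$ and fails; it is precisely the symmetric weight $s^{(m-k)/2}$, matched to the \emph{gap} $m-k$ rather than to the index $k$, that makes the resulting double sum a convergent geometric series. Everything else is routine bookkeeping with geometric sums.
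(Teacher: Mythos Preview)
Your proof is correct and follows essentially the same route as the paper: telescope $a_m$ from $a_1$, recognize the resulting sum as a discrete convolution with the geometric kernel $s^j$, and bound its $\ell^2$ norm. The only cosmetic difference is that the paper pulls the sum over the shift index outside the norm via Minkowski's inequality, whereas you use the balanced Cauchy--Schwarz splitting $s^{m-k}=s^{(m-k)/2}\cdot s^{(m-k)/2}$ and then swap sums; both are standard proofs of the same $\ell^1\!*\!\ell^2\to\ell^2$ Young estimate and yield comparable constants. Your remark about the non-homogeneity of the displayed inequality is well taken: the paper's own argument in fact proves the unsquared bound $\|r^{m/2}a_m\|_{\ell^2}\le C_1'|a_1|+C_2'\|r^{m/2}(a_{m+1}-a_m)\|_{\ell^2}$, so the $|a_1|^2$ in the statement is a typo.
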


\begin{proof}
The first statement obviously implies the second statement. Conversely, writing $a_m$ as a telescoping series
\[
a_m 
= a_1 + \sum_{k=1}^{m-1} (a_{k+1} - a_k) 
= a_1 + \sum_{k=1}^{m-1} (a_{m-k+1} - a_{m-k}),
\]
we see that
\[
r^{m/2} a_m 
= r^{m/2} a_1 + \sum_{k=1}^{m-1} (a_{m-k+1} - a_{m-k}) r^{(m-k)/2}r^{k/2}.
\]
Using Minkowski's inequality, we have
\begin{align*}
&\left\|\sum_{k=1}^{m-1}(a_{m-k+1}-a_{m-k})r^{(m-k)/2}r^{k/2}\right\|_{\ell^2}\\
&\tab\leq \sum_{k=1}^\infty r^{k/2} \left\|(a_{m-k+1}-a_{m-k}) r^{(m-k)/2}\chi_{k<m}\right\|_{\ell^2} \\
&\tab\leq \sum_{k=1}^\infty r^{k/2} \|(a_{m+1} - a_m) r^{m/2}\|_{\ell^2}.
\end{align*}
Using Minkowski's inequality again and the above inequality, we find that
\[
\|r^{m/2} a_m\|_{\ell^2} 
\leq \|r^{m/2} a_1\|_{\ell^2} +\left\|\sum_{k=1}^{m-1}(a_{m-k+1}-a_{m-k}) r^{(m-k)/2}r^{k/2}\right\|_{\ell^2},
\]
which completes the proof. 
\end{proof}

\begin{lemma} \label{series L2}
Fix a constant $r>1$ and a sequence $\{a_m\}$. Then $a_m = A + a_m'$ with $\|r^{m/2} a_m'\|_{\ell^2} < \infty$ if and only if $\|r^{m/2} (a_{m+1}-a_m)\|_{\ell^2} < \infty$. In fact, 
\[
\|r^{m/2} a_m'\|_{\ell^2} 
\leq C \|r^{m/2} (a_{m+1}-a_m)\|_{\ell^2}.
\]
\end{lemma}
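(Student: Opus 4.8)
The plan is to prove this as the $r>1$ companion to Lemma \ref{series L1}, reusing the telescoping-and-Minkowski strategy but summing \emph{toward} a limit rather than away from $a_1$. The forward implication is immediate: if $a_m = A + a_m'$ with $\|r^{m/2}a_m'\|_{\ell^2}<\infty$, then $a_{m+1}-a_m = a_{m+1}'-a_m'$, and since $\|r^{m/2}a_{m+1}'\|_{\ell^2} = r^{-1/2}\|r^{(m+1)/2}a_{m+1}'\|_{\ell^2} \le r^{-1/2}\|r^{m/2}a_m'\|_{\ell^2}$, the triangle inequality gives $\|r^{m/2}(a_{m+1}-a_m)\|_{\ell^2}<\infty$.

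For the converse I would first show that $a_m$ converges. Because $r>1$, the sequence $r^{-m/2}$ is square-summable, so Cauchy--Schwarz yields $\sum_k |a_{k+1}-a_k| = \sum_k \big(r^{k/2}|a_{k+1}-a_k|\big) r^{-k/2} \le \|r^{k/2}(a_{k+1}-a_k)\|_{\ell^2}\big(\sum_k r^{-k}\big)^{1/2} < \infty$. Hence $\sum_k (a_{k+1}-a_k)$ converges absolutely and $a_m \to A$ for $A = \lim_{m\to\infty} a_m$. Setting $a_m' = a_m - A$, the telescoping identity becomes a \emph{tail} sum, $a_m' = -\sum_{k=m}^\infty (a_{k+1}-a_k)$.

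The heart of the argument is to estimate $\|r^{m/2}a_m'\|_{\ell^2}$ from this tail sum. Writing $b_k = r^{k/2}(a_{k+1}-a_k)$ and reindexing $k = m+j$, I would rewrite $r^{m/2}a_m' = -\sum_{j=0}^\infty r^{-j/2} b_{m+j}$. Applying Minkowski's inequality in the free index $m$, with the geometric weights $r^{-j/2}$ pulled outside, gives $\|r^{m/2}a_m'\|_{\ell^2} \le \sum_{j=0}^\infty r^{-j/2}\|b_{m+j}\|_{\ell^2(m)}$. Since each shift only discards the first $j$ terms of $\{b_k\}$, we have $\|b_{m+j}\|_{\ell^2(m)} \le \|b_k\|_{\ell^2}$ for every $j\ge 0$, and the geometric series $\sum_{j\ge 0} r^{-j/2} = (1-r^{-1/2})^{-1}$ converges because $r>1$. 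This produces the claimed inequality with $C = (1-r^{-1/2})^{-1}$, and in particular the finiteness.

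The main obstacle is purely organizational: arranging the tail sum so that the exploding weight $r^{m/2}$ is absorbed into the already-controlled quantity $b_{m+j}$ while the leftover factor $r^{-j/2}$ forms a convergent geometric series. The key observation that makes this work---and that distinguishes the $r>1$ case from Lemma \ref{series L1}---is that one must telescope toward the limit $A$ rather than from the initial term, so that the surviving exponent $(m-k)/2$ is negative and yields summable weights. No genuinely hard estimate is required beyond Minkowski's inequality and this bookkeeping.
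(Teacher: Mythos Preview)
Your proof is correct and follows essentially the same approach as the paper: show convergence of $a_m$ via Cauchy--Schwarz, write $a_m'$ as a tail telescoping sum, then apply Minkowski's inequality after factoring out the geometric weights $r^{-j/2}$. Your presentation is in fact slightly cleaner, since you introduce $b_k = r^{k/2}(a_{k+1}-a_k)$ and obtain the explicit constant $C = (1-r^{-1/2})^{-1}$.
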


\begin{proof}
Clearly, the first statement implies the second statement. To prove the converse, we first show that $\{a_m\}$ is Cauchy. For $m > n$, we have
\[
a_m - a_n = \sum_{k=n}^{m-1} \(a_{k+1} - a_k\) r^{k/2}r^{-k/2}
\]
and applying Cauchy-Schwarz yields 
\[
|a_m - a_n| 
\leq \(\sum_{k=n}^{m-1} (a_{k+1} - a_k)^2 r^k \)^{1/2} \(\sum_{k=n}^{m-1} \frac{1}{r^k} \)^{1/2} \leq C \sqrt{\frac{1}{r^n}}.
\]
It follows that $\{a_m\}$ is Cauchy and by completeness of the reals, $a_m \to A$ for some $A$. Since
\[
a_m - A
= \sum_{k=m}^\infty (a_k - a_{k+1})
= \sum_{k=0}^\infty (a_{m+k} - a_{m+k+1}),
\]
we see that
\[
r^{m/2} (a_m - A) 
= \sum_{k=0}^\infty r^{(m+k)/2}r^{-k/2}(a_{m+k} - a_{m+k+1}).
\]
Using this equation and Minkowski's inequality, we have
\begin{align*}
\|r^{m/2}(a_m-A)\|_{\ell^2}
&\leq \sum_{k=0}^\infty r^{-k/2} \|(a_{k+m} - a_{k+m+1}) r^{(k+m)/2}\|_{\ell^2} \\
&\leq\sum_{k=0}^\infty r^{-k/2} \|(a_m - a_{m+1})r^{m/2}\|_{\ell^2},
\end{align*}
which completes the proof.
\end{proof}

\begin{lemma} \label{series L3}
Given a sequence $\{a_m\}$, $\|(25/3)^{m/2} (5 a_{m+2} - 8a_{m+1} + 3a_m)\|_{\ell^2} < \infty$ if and only if $a_m = A_1 + A_2 (3/5)^m + a_m'$ with $\|(25/3)^{m/2} a_m'\|_{\ell^2} < \infty$. More specifically,
\[
\| (25/3)^{m/2} a_m'\|_{\ell^2} 
\leq C \|(25/3)^m (5 a_{m+2} - 8a_{m+1} + 3a_m)\|_{\ell^2}.
\]
Note that the equation for $a_m$ and the bound for $a_m'$ imply that $A_1 = \lim_{m\to\infty} a_m$ and $A_2 = \lim_{m\to\infty} (5/3)^m(a_m- A_1)$. 
\end{lemma}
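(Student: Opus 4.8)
The plan is to exploit the factorization of the characteristic polynomial of the underlying recurrence. Since $5r^2 - 8r + 3 = (r-1)(5r-3)$, the homogeneous solutions of $5a_{m+2} - 8a_{m+1} + 3a_m = 0$ are the constant sequence (root $1$) and $(3/5)^m$ (root $3/5$), which is precisely the structure $a_m = A_1 + A_2(3/5)^m + a_m'$ we must produce. Accordingly I would peel off the two roots one at a time, using Lemma \ref{series L2} (the $r>1$ version) as the workhorse. The converse is the easy direction: if $a_m = A_1 + A_2(3/5)^m + a_m'$, then because both $A_1$ and $A_2(3/5)^m$ are annihilated by the map $a_m \mapsto 5a_{m+2} - 8a_{m+1} + 3a_m$, we get $5a_{m+2} - 8a_{m+1} + 3a_m = 5a_{m+2}' - 8a_{m+1}' + 3a_m'$, and the triangle inequality together with the fact that shifting the index changes the weight $(25/3)^{m/2}$ only by the bounded factor $(25/3)^{\pm 1/2}$ gives $\|(25/3)^{m/2}(5a_{m+2}-8a_{m+1}+3a_m)\|_{\ell^2} \le C\|(25/3)^{m/2}a_m'\|_{\ell^2}$.

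For the forward direction I would first set $c_m = a_{m+1} - a_m$ and use the algebraic identity $5a_{m+2} - 8a_{m+1} + 3a_m = 5c_{m+1} - 3c_m$, which removes the root $1$. To convert the remaining first-order operator into a plain difference, substitute $g_m = (5/3)^m c_m$; then $5c_{m+1} - 3c_m = 3(3/5)^m(g_{m+1} - g_m)$. Since $[(25/3)^{1/2}(3/5)]^m = 3^{m/2}$, the hypothesis becomes $\|3^{m/2}(g_{m+1}-g_m)\|_{\ell^2} < \infty$. Applying Lemma \ref{series L2} with $r = 3$ yields $g_m = A + g_m'$ with $\|3^{m/2}g_m'\|_{\ell^2} < \infty$, and translating back gives $c_m = A(3/5)^m + c_m'$, where $c_m' = (3/5)^m g_m'$ satisfies $\|(25/3)^{m/2}c_m'\|_{\ell^2} = \|3^{m/2}g_m'\|_{\ell^2} < \infty$.

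It remains to integrate $c_m$ a second time. Summing the telescoping identity $a_m = a_1 + \sum_{k=1}^{m-1} c_k$, the geometric part $\sum_{k=1}^{m-1} A(3/5)^k$ evaluates to a constant plus a multiple of $(3/5)^m$, feeding $A_1$ and $A_2$. For the remainder I would use that $25/3 > 1$ forces $c_m' \to 0$ fast enough that $\sum_k |c_k'| < \infty$ (by Cauchy--Schwarz against the summable weight $(3/25)^{k/2}$), so $\sum_{k=1}^{m-1} c_k' = C - \sum_{k=m}^\infty c_k'$ for a finite constant $C$; I then set $a_m' = -\sum_{k=m}^\infty c_k'$ and absorb the rest into $A_1$. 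This produces $A_1 = a_1 + 3A/2 + C$ and $A_2 = -5A/2$.

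The main obstacle, and the only step needing genuine care, is verifying that this tail satisfies $\|(25/3)^{m/2}a_m'\|_{\ell^2} < \infty$. Here I would observe that $a_{m+1}' - a_m' = c_m'$ and $a_m' \to 0$, and apply Lemma \ref{series L2} a second time, now with $r = 25/3$: the estimate $\|(25/3)^{m/2}c_m'\|_{\ell^2} < \infty$ produces a decomposition $a_m' = A'' + b_m$ with $\|(25/3)^{m/2}b_m\|_{\ell^2} < \infty$, and since $a_m' \to 0$ the constant $A'' = \lim a_m' = 0$, so $b_m = a_m'$ and the desired weighted bound holds. Finally, the limit identifications in the concluding Note come for free: $a_m' \to 0$ and $(3/5)^m \to 0$ give $A_1 = \lim_m a_m$, while $(5/3)^m a_m' \to 0$ (as $(5/3)(3/25)^{1/2} < 1$) gives $A_2 = \lim_m (5/3)^m(a_m - A_1)$.
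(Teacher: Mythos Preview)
Your proof is correct and follows essentially the same route as the paper's: both peel off the factor $a_{m+1}-a_m$, rescale by $(5/3)^m$ to reduce to a plain difference, and then apply Lemma~\ref{series L2} twice (first with $r=3$, then with $r=25/3$). The paper packages the second application slightly more compactly---defining $e_m = a_m + \tfrac{5}{2}(3/5)^m D$ directly and applying the lemma to $e_m$, rather than writing out the telescoping tail sum for $a_m'$ and then invoking the lemma---but the content is identical.
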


\begin{proof}
Clearly the second statement implies the first statement. To prove the converse, we apply Lemma \ref{series L2} twice. Making the substitution $3^m d_m = 5^m (a_{m+1}-a_m)$ yields 
\[
\sum_{m=1}^\infty \(\frac{25}{3}\)^m (5a_{m+2}-8a_{m+1}+3a_m)^2
= 9 \sum_{m=1}^\infty 3^m (d_{m+1}-d_m)^2
< \infty.
\]
The hypotheses of the lemma are satisfied for $\{d_m\}$, so we have $d_m = D + d_m'$ with 
\[
\sum_{m=1}^\infty 3^m |d_m'|^2 
\leq C \sum_{m=1}^\infty 3^m (d_{m+1}-d_m)^2.
\] 
In order to apply the lemma again, define $e_m = a_m + (5/2)(3/5)^m D$ so that
\[
\sum_{m=1}^\infty 3^m |d_m'|^2 
= \sum_{m=1}^\infty \(\frac{25}{3}\)^m (e_{m+1}-e_m)^2
< \infty.
\]
Using the lemma again, except on the sequence $\{e_m\}$, we have $e_m = E + e_m'$ with the estimate 
\[
\sum_{m=1}^\infty \(\frac{25}{3}\)^m |e_m'|^2 
\leq C \sum_{m=1}^\infty \(\frac{25}{3}\)^m (e_{m+1}-e_m)^2.
\]
Finally, using the definition of $e_m$, we find that 
$a_m = E - (5/2)(3/5)^m D + e_m'$. Combining the above equations and inequalities, we obtain 
\[
\sum_{m=1}^\infty \(\frac{25}{3}\)^m |e_m'|^2 
\leq C \sum_{m=1}^\infty \(\frac{25}{3}\)^m (5a_{m+2}-8a_{m+1}+3a_m)^2.
\] 
\end{proof}

\begin{lemma} \label{series L4}
Given a sequence $\{\eta_m\}$, $\|3^{m/2}(3\eta_{m+2}-16\eta_{m+1}+5\eta_m)\|_{\ell^2} < \infty$ if and only if $\eta_m = 5^m A + \eta_m'$ with $\|3^{m/2} \eta_m'\|_{\ell^2} < \infty$. Furthermore,
\[
\|3^{m/2} \eta_m'\|_{\ell^2}^2
\leq C_1(\eta_2 - 5\eta_1)^2 + C_2\|3^{m/2}(3\eta_{m+2}-16\eta_{m+1}+5\eta_m)\|_{\ell^2}^2.
\]
\end{lemma}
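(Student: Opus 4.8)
The plan is to mirror the structure of Lemma~\ref{series L3}: the key observation is that the characteristic polynomial $3r^2-16r+5$ of the recurrence factors as $(3r-1)(r-5)$, with roots $5$ and $1/3$. Writing $E$ for the shift operator $E\eta_m=\eta_{m+1}$, the second-order difference operator factors as $3\eta_{m+2}-16\eta_{m+1}+5\eta_m=(3E-1)(E-5)\eta_m$. For the easy direction, note that $5^m$ is annihilated by the operator, since $3\cdot 5^{m+2}-16\cdot 5^{m+1}+5\cdot 5^m=5^m(75-80+5)=0$; hence if $\eta_m=5^mA+\eta_m'$ then $3\eta_{m+2}-16\eta_{m+1}+5\eta_m$ depends only on $\eta_m'$, and finiteness of its weighted norm follows from the triangle inequality together with the quasi-invariance of the $3^{m/2}$-weighted $\ell^2$ norm under shifts. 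The content of the lemma is the converse, which I would obtain by peeling off the two linear factors one at a time and invoking Lemma~\ref{series L1} and Lemma~\ref{series L2}.

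First I would set $\xi_m=\eta_{m+1}-5\eta_m=(E-5)\eta_m$, so that the hypothesis reads $\|3^{m/2}(3\xi_{m+1}-\xi_m)\|_{\ell^2}<\infty$. Substituting $t_m=3^m\xi_m$ turns $3\xi_{m+1}-\xi_m$ into the first difference $3^{-m}(t_{m+1}-t_m)$, and a direct check gives $3^{m/2}(3\xi_{m+1}-\xi_m)=(1/3)^{m/2}(t_{m+1}-t_m)$ together with $3^{m/2}\xi_m=(1/3)^{m/2}t_m$. Lemma~\ref{series L1} with $r=1/3$ then bounds $\|3^{m/2}\xi_m\|_{\ell^2}^2$ by $C_1|t_1|^2+C_2\|3^{m/2}(3\xi_{m+1}-\xi_m)\|_{\ell^2}^2$. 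The crucial bookkeeping point is that the factor $E-5$ is applied innermost, so the boundary term is $t_1=3\xi_1=3(\eta_2-5\eta_1)$; this is exactly what produces the $(\eta_2-5\eta_1)^2$ appearing in the stated estimate. Choosing the opposite factorization order would instead give the constant $3\eta_2-\eta_1$, which would not match.

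Second I would remove the factor $E-5$, i.e.\ solve $\eta_{m+1}-5\eta_m=\xi_m$ given $\|3^{m/2}\xi_m\|_{\ell^2}<\infty$. Since $5^m$ is the expanding homogeneous solution, I substitute $u_m=5^{-m}\eta_m$, which gives $u_{m+1}-u_m=5^{-(m+1)}\xi_m$, and I observe that the weights align perfectly: $75^{m/2}5^{-m}=3^{m/2}$, so $\|75^{m/2}(u_{m+1}-u_m)\|_{\ell^2}=\tfrac15\|3^{m/2}\xi_m\|_{\ell^2}<\infty$. Applying Lemma~\ref{series L2} with $r=75$ produces $u_m=A+u_m'$ with $\|75^{m/2}u_m'\|_{\ell^2}\leq C\|3^{m/2}\xi_m\|_{\ell^2}$. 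Setting $\eta_m'=5^mu_m'$ gives the claimed decomposition $\eta_m=5^mA+\eta_m'$, and since $3^{m/2}5^m=75^{m/2}$ we have $\|3^{m/2}\eta_m'\|_{\ell^2}=\|75^{m/2}u_m'\|_{\ell^2}$. Chaining this with the first-factor estimate and squaring (via $(x+y)^2\leq 2x^2+2y^2$) yields the desired inequality.

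The routine parts are the algebraic verifications that each substitution transforms the weighted norms as claimed. I expect the only genuinely delicate point to be the one flagged above: arranging the factorization so that the intermediate boundary constant is precisely $\eta_2-5\eta_1$, and keeping track of the three relevant weights ($1/3$, $75$, and the ambient $3^{m/2}$) so that each reduction falls exactly under the hypotheses of Lemma~\ref{series L1} (for the contracting root $1/3$) and Lemma~\ref{series L2} (for the expanding root $5$, which appears at weight $75$).
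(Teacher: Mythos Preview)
Your proposal is correct and follows essentially the same approach as the paper: the paper sets $e_m = 3^m(\eta_{m+1}-5\eta_m)$ (your $t_m$), applies Lemma~\ref{series L1} with $r=1/3$ to bound $\sum 3^{-m}|e_m|^2$ in terms of $|e_1|^2$ and the hypothesis, then substitutes $d_m = 5^{-m}\eta_m$ (your $u_m$) and applies Lemma~\ref{series L2} with $r=75$ to extract the $5^m A$ term. Your added commentary on the operator factorization $(3E-1)(E-5)$ and the reason the boundary term must be $\eta_2-5\eta_1$ is a nice clarification of why the substitutions are chosen in this order.
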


\begin{proof}
The second statement obviously implies the first statement. To prove the converse, we use both Lemma \ref{series L1} and Lemma \ref{series L2}. Define $e_m = 3^m (\eta_{m+1} - 5 \eta_m)$ so that
\[
\sum_{m=1}^\infty 3^m(3\eta_{m+2}-16\eta_{m+1}+5\eta_m)^2 
= \sum_{m=1}^\infty \frac{1}{3^m}(e_{m+1}-e_m)^2 
< \infty.
\]
Applying Lemma \ref{series L1} to the sequence $\{e_m\}$ gives us
\[
\sum_{m=1}^\infty \frac{1}{3^m} |e_m|^2 
\leq C_1|e_1|^2 + C_2 \sum_{m=1}^\infty 3^m (3\eta_{m+2}-16\eta_{m+1}+5\eta_m)^2 
< \infty.
\]
Making the substitution $5^m d_m = \eta_m$, we see that 
\[
\sum_{m=1}^\infty \frac{1}{3^m} |e_m|^2 
= \sum_{m=1}^\infty 3^m (\eta_{m+1} - 5 \eta_m)^2 
= 25 \sum_{m=1}^\infty 75^m (d_{m+1} - d_m)^2 
< \infty.
\]
Applying Lemma \ref{series L2} to the sequence $\{d_m\}$, we find that $d_m = D + d_m'$ with 
\[
\sum 75^m |d_m'|^2 \leq C \sum_{m=1}^\infty 75^m (d_{m+1}-d_m)^2.
\]
It follows from the definition of $d_m$ that $\eta_m = 5^m D + 5^m d_m'$. Defining $\eta_m' = 5^m d_m'$ and combining the above equations and inequalities, we obtain
\[
\sum_{m=1}^\infty 3^m |\eta_m'|^2 
\leq C_1(\eta_2 - 5\eta_1)^2 + C_2 \sum_{m=1}^\infty 3^m (3\eta_{m+2}-16\eta_{m+1}+5\eta_m)^2.
\] 
\end{proof}

\section{Acknowledgements}

We are grateful to the referee for suggesting the statement and proof of Theorem \ref{new}. 


\end{document}